\newcommand{\Hom}{ \,{\rm Hom} \,}
\newcommand{\Sym}{ \,{\rm Sym} \,}
\newtheorem{theorem}{Theorem}[section]
\newtheorem{proposition}[theorem]{Proposition}
\newtheorem{corollary}[theorem]{Corollary}
\newtheorem{lemma}[theorem]{Lemma}
\newtheorem{definition}[theorem]{Definition}
\newtheorem{remark}[theorem]{Remark}
\newtheorem{example}[theorem]{Example}
{\bf}{\it}
\newcommand{\CC}{{\mathbb C }}
\newcommand{\QQ}{{\mathbb Q }}
\newcommand{\ff}{{\mathbf f }}
\newcommand{\calo}{\mathcal{O}}
\newcommand{\calf}{\mathcal{F}}
\newcommand{\cale}{\mathcal{E}}
\newcommand{\cals}{\mathcal{S}}
\newcommand{\calz}{\mathcal{Z}}
\newcommand{\calm}{\mathcal{M}}
\newcommand{\reg}{\mathrm{reg}}
\newcommand{\epd}[1]{\mathrm{eP}[#1]}
\newcommand{\emu}{\mathrm{emult}}
\newcommand{\um}[1]{\mathrm{sum}(#1)}
\newcommand{\dist}{{\mathrm{dst}}}
\newcommand{\lead}{\mathrm{lead}}
\newcommand{\Bipi}{{\boldsymbol{\Pi}_k}}
\newcommand{\bipi}{{\boldsymbol{\pi}}}
\newcommand{\sg}[1]{\mathcal{S}_{#1}}
\newcommand{\res}{\operatornamewithlimits{Res}}
\newcommand{\ires}{\res_{z_1=\infty}\res_{z_{2}=\infty}\dots\res_{z_k=\infty}}
\newcommand{\sires}{\res_{\mathbf{z}=\infty}}
\newcommand{\dbz}{\,d\mathbf{z}}
\newcommand{\coeff}{\mathrm{coeff}}
\newcommand{\symdot}{\mathrm{Sym}^{\le k}\CC^n}
\newcommand{\symdotx}{\mathrm{Sym}^{\le k}T_X^*}
\newcommand{\grass}{\mathrm{Grass}}
\newcommand{\flag}{\mathrm{Flag}}
\newcommand{\TT}{\mathrm{T}}
\newcommand{\Fl}{\mathcal{F}}
\newcommand{\cotx}{T_X^*}
\newcommand{\bz}{\mathbf{z}}
\newcommand{\bv}{\mathbf{v}}
\newcommand{\kt}{{K}}
\newcommand{\OO}{\mathcal{O}}
\newcommand{\jetk}[2]{J_{k}({#1},{#2})}
\newcommand{\jetreg}[2]{J_{k}^{\mathrm{reg}}({#1},{#2})}
\newcommand{\tc}{\hat T}
\newcommand{\liet}{{\mathfrak t}}
\newcommand{\GL}{\mathrm{GL}}
\newcommand{\sym}{\mathrm{Sym}}
\newcommand{\Hilb}{\mathrm{Hilb}}
\newcommand{\CHilb}{\mathrm{CHilb}}
\def\a{\alpha}
\def\g{\gamma}
\def\l{\lambda}
\def\s{\sigma}
\def\vp{\varphi}
\title{Tautological integrals on curvilinear Hilbert schemes} 
\author{Gergely B\'erczi, Oxford}
\address{Mathematical Institute, University of Oxford, Andrew Wiles Building, OX2 6GG Oxford, UK} \email{berczi@maths.ox.ac.uk}
\date{}
\begin{document}

\begin{abstract}
We take a new look at the curvilinear Hilbert scheme of points on a smooth projective variety $X$ as a projective completion of the non-reductive quotient of holomorphic map germs from the complex line into $X$ by polynomial reparametrisations. Using an algebraic model of this quotient coming from global singularity theory we develop an iterated residue formula for tautological integrals over curvilinear Hilbert schemes.
\end{abstract}

\maketitle

\section{Introduction}\label{sec:intro}

Let $X$ be a smooth projective variety of dimension $n$ and let $F$ be a rank $r$ algebraic vector bundle on X. Let $X^{[k]}$ denote  the Hilbert scheme of length $k$ subschemes of $X$ and let $F^{[k]}$ be the corresponding tautological rank $rk$ bundle on $X^{[k]}$ whose fibre at $\xi \in X^{[k]}$ is $H^0(\xi,F|_\xi)$. 

Let $\Hilb^k_0(\CC^n)$ be the punctual Hilbert scheme defined as the closed subset of $(\CC^n)^{[k]}=\Hilb^k(\CC^n)$ parametrising subschemes supported at the origin. Following Rennemo \cite{rennemo} we define punctual geometric subsets as constructible subsets $Q\subseteq \Hilb^k_0(\CC^n)$ which are union of isomorphism classes of schemes, that is, if $ \xi\in Q$ and $\xi'\in \Hilb^k_0(\CC^n)$ are isomorphic (they have isomorphic coordinate rings) then $\xi'\in Q$. Geometric subsets of $X^{[k]}$ of type $(Q_1,\ldots, Q_s)$ are those generated by finite unions, intersections and complements from sets of the form 
\[P(Q_1,\ldots, Q_s)=\{\xi\in X^{[k]}|\xi=\xi_1\sqcup \ldots \sqcup \xi_s, \xi_i \in Q_i\}.\]
For a geometric subset $\calz$ let $\overline{\calz}$ denote its Zariski closure in $X^{[k]}$. Let $M(c_1,\ldots, c_{rk})$ be a monomial in the Chern classes $c_i=c_i(F^{[k]})$ of weighted degree equal to $\dim \overline{\calz}$ where the weight of $c_i$ is $2i$. If $\alpha_M \in \Omega^*(\calz)$ is a closed compactly supported differential form representing the cohomology class of $M(c_1,\ldots, c_{rk})$ then the Chern numbers 
\[[\overline{\calz}]\cap M(c_1,\ldots, c_{rk})=\int_{\overline{\calz}} \alpha_M\]
are called tautological integrals of $F^{[k]}$. Rennemo \cite{rennemo} shows that these integrals can be expressed in terms of the Chern numbers of $X$ and $F$. 
\begin{theorem}[Rennemo \cite{rennemo}]\label{rennemo}
Let $\calm_{r,n}$ denote the set of weighted-degree-$n$ monomials in the Chern classes $c_1(F),\ldots, c_r(F)$ and $c_1(X),\ldots, c_n(X)$. For $S \in \calm_{r,n}$ let $\alpha_S \in \Omega^{top}(X)$ be a closed compactly supported differential form representing the cohomology class of $S$ and let $y_S=\int_X \alpha_S$ denote the corresponding intersection number. Let $\calz \subset X^{[k]}$ be a geometric subset. Then for any Chern monomial $M=M(c_1,\ldots, c_{rk})$ of weighted degree $\dim \overline{\calz}$ there is a polynomial $R_M$ in $|\calm_{r,n}|$ variables depending only on $M$ such that  
\[[\overline{\calz}] \cap M(c_1,\ldots, c_{rk})=R_M(y_S:S\in \calm_{r,n}).\]
\end{theorem}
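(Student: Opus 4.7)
The plan is to reduce the theorem to the computation of Chern integrals over the strata of $X^{[k]}$ indexed by collision patterns of the support, and to show that every such stratum integral factors as a universal punctual coefficient times a Chern intersection number on a product of copies of $X$.

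\emph{Step 1: Reduction via inclusion--exclusion.} Since geometric subsets are generated by finite unions, intersections and complements from the sets $P(Q_1,\ldots,Q_s)$, an elementary inclusion--exclusion argument on closed compactly supported forms lets me assume $\calz=P(Q_{i_1},\ldots,Q_{i_t})$, where each $Q_{i_j}$ is a punctual geometric subset, and further that the multiset $\{Q_{i_j}\}$ is fixed. The decomposition according to support type reduces the problem to integrating over a locally closed stratum of $X^{[k]}$ cut out by a fixed combinatorial partition of $k$.

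\emph{Step 2: Incidence model.} I would introduce the natural incidence variety
\[
\widetilde{\calz}=\bigl\{(x_1,\ldots,x_t;\xi_1,\ldots,\xi_t):x_j\in X,\ \xi_j\in\overline{Q_{i_j}},\ \mathrm{supp}(\xi_j)=x_j\bigr\}\subset X^t\times\prod_j\overline{Q_{i_j}},
\]
formed first over the open configuration space of $t$-tuples of distinct points and then by Zariski closure. There is a natural map $\pi:\widetilde{\calz}\to\overline{\calz}\subset X^{[k]}$, generically finite of degree equal to the combinatorial symmetry of the ordering, so that $\pi_*\pi^*M$ computes $[\overline{\calz}]\cap M$ up to that explicit factor.

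\emph{Step 3: Splitting of the tautological bundle.} On the open stratum of $\widetilde{\calz}$ the pulled back tautological bundle decomposes as a direct sum $\bigoplus_j F^{[\xi_j]}$, where each summand depends only on the data localised at $x_j$. I would write the total Chern class $\pi^*c(F^{[k]})$ as a polynomial in the classes $c_i(F)$ and $c_i(X)$ pulled back from each $X$-factor together with universal punctual Chern classes on $\Hilb^{|Q_{i_j}|}_0(\CC^n)$ that depend only on the isomorphism types of the punctual pieces. Extending this factorisation across the deep diagonals where punctual pieces collide is the central technical point; I would handle it by an iterated blow-up resolution of the incidence variety combined with a deformation-to-the-normal-cone analysis of the limit behaviour of $F^{[k]}$ under such collisions.

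\emph{Step 4: Integration, universality, and main obstacle.} Granted the factorisation of Step~3, Fubini splits the integral of $\pi^*M$ over $\widetilde{\calz}$ into, first, a universal constant $c_{\{Q_{i_j}\},M}$ coming from integrating the punctual classes over the fibres, and second, an intersection number on $X^t$ of a polynomial in $c_i(F)$ and $c_i(X)$. By the splitting principle the latter is a polynomial in the basic Chern numbers $y_S$, $S\in\calm_{r,n}$, and assembling these over the finitely many strata produced by Step~1 yields the polynomial $R_M$. The main obstacle is Step~3: $F^{[k]}$ does not literally split on the closure, and controlling the correction terms from colliding supports is where all the content of the theorem lies. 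A conceptually cleaner alternative is a universality argument: compute the integrals equivariantly on test cases $X=\PP^n$ with standard torus action by Bott localisation, observe that the fixed point contributions are polynomial in local Chern data, and deduce the polynomial dependence on global Chern numbers by comparing dimensions in the space of polynomials in $y_S$.
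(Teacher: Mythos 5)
First, a point of reference: this paper does not prove Theorem \ref{rennemo} at all --- it quotes it from Rennemo, and the only information it gives about the proof is that it is nonconstructive and rests on the fact that every class in the cohomology of a Grassmannian (bundle) is a polynomial in the Chern classes of the universal bundle. So your proposal has to be judged against that template rather than against a written-out argument in the paper. Your skeleton --- reduce by inclusion--exclusion to a single $P(Q_{i_1},\ldots,Q_{i_t})$, order the supports via an incidence variety over $X^t$, split $F^{[k]}$ into punctual pieces, and integrate by Fubini --- is genuinely the right shape and is close in spirit to Rennemo's argument.

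The gap is the one you name yourself, Step 3, and it is not a technical loose end but the entire content. Two separate problems hide there. First, even on the open stratum, ``universal punctual Chern classes on $\Hilb^{m}_0(\CC^n)$'' is not yet a meaningful notion: the punctual Hilbert scheme varies over $X$ as a bundle associated to the nonlinear jet-group frame bundle, and to speak of universal classes one needs the embedding $\xi\mapsto I_\xi\subset\mathfrak{m}/\mathfrak{m}^N$ of this bundle into an honest Grassmannian bundle over $X$ (exactly the mechanism \S\ref{sec:curvilinear} of this paper sets up in the curvilinear case), after which Leray--Hirsch gives, nonconstructively, that the class of the closure of a punctual geometric subset and the restriction of $c(F^{[k]})$ are polynomials in $c_i(X)$, $c_i(F)$ and Chern classes of universal bundles. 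That single idea is what makes the universal polynomial exist without ever resolving the deep diagonals, and it is missing from your write-up; in its place you offer ``iterated blow-ups plus deformation to the normal cone,'' which is a gesture at precisely the unknown correction terms. Second, your fallback is circular: computing on $X=\PP^n$ by Bott localisation and ``comparing dimensions in the space of polynomials in $y_S$'' presupposes that the answer for a general $X$ is a function of the $y_S$ alone, which is the statement to be proved. (A correct version of that strategy exists --- cobordism invariance in the style of Ellingsrud--G\"ottsche--Lehn --- but it requires a substantive degeneration argument that you have not supplied.)
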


The proof of \cite{rennemo} is nonconstructive and based on the fact that an element in the cohomology ring of a Grassmannian is a polynomial in the Chern classes of the universal bundle. Lacking a method of obtaining information about this polynomial, there is no apparent way of turning this proof into an algorithm.
Explicit expressions for tautological integrals are not known in general.  On surfaces the method of \cite{egl} yields a recursion which in principle computes the universal polynomial explicitly. The top Segre classes of tautological bundles over surfaces provides an example of this problem and the conjecture of Lehn \cite{lehn} has been recently proved by Marian, Oprea and Pandharipande \cite{mop} for K3 surfaces using virtual localisation. 

Let $X$ be a smooth projective variety of dimension $n$. This paper provides a closed iterated residue formula for tautological integrals over the simplest geometric subsets $P(Q)$ where $s=1$ and the punctual geometric subset $Q$ is defined as
\[Q=\{\xi\in \Hilb^k_0(\CC^n):\calo_\xi \simeq \CC[z]/z^{k}\}.\]
We will see that $\overline{Q}$ is an irreducible component of the punctual Hilbert scheme. 
Points of $P(Q)$ correspond to curvilinear subschemes on $X$, i.e subschemes contained in the germ of some smooth curve on X. In other words, these are the limit points on $X^{[k]}$ where $k$ distinct points come together along a smooth curve. We denote this curvilinear locus by $CX^{[k]}$ and its closure by $\overline{CX}^{[k]}$ which we call the curvilinear Hilbert scheme.  

The main result of the present paper if the following

\begin{theorem}\label{main}
Let $k\ge 1$ and $M(x_1,\ldots, x_{r(k+1)})$ be a monomial of weighted degree $\dim \overline{CX}^{[k+1]}=n+(n-1)k$ in the variables $x_i$ of weight $2i$ for $1\le i \le r(k+1)$. Let $c_i=c_i(F^{[k+1]})$ denote the $i$th Chern class of the tautological rank $r(k+1)$ bundle on $X^{[k+1]}$. Then  
\[[\overline{CX}^{[k+1]}] \cap M(c_1,\ldots, c_{r(k+1)})=\int_X \sires \frac{(-1)^{nk}\prod_{i<j}(z_i-z_j)Q_k(\bz)M(c_i(z_i+\theta_j,\theta_j))d\bz}{\prod_{i+j\le l\le k}(z_i+z_j-z_l)(z_1\ldots z_k)^n}\prod_{i=1}^k s_X\left(\frac{1}{z_i}\right)\]
where $s_X\left(\frac{1}{z_i}\right)=1+\frac{s_1(X)}{z_i}+\frac{s_2(X)}{z_i^2}+\ldots +\frac{s_n(X)}{z_i^n}$
is the total Segre class at $1/z_i$ and the iterated residue is equal to the coefficient of $(z_1\ldots z_k)^{-1}$ in the expansion of the rational expression in the domain $z_1\ll \ldots \ll z_k$. Finally $Q_k(\bz)$ is a homogeneous polynomial invariant of Morin singularities given as the equivariant Poincar\'e dual of a Borel orbit defined below under the explanation. 
\end{theorem}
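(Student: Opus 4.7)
The plan is to model the curvilinear Hilbert scheme as a non-reductive geometric quotient of holomorphic map germs by polynomial reparametrizations, transfer the tautological integral to an equivariant integral on this model, and then apply a Borel-type iterated residue formula.

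First, I would identify $CX^{[k+1]}$ with the quotient $\jetreg{1}{X}/\mathrm{Diff}_k$, where $\jetreg{1}{X}$ is the bundle of regular $k$-jets of holomorphic map germs $\gamma:(\CC,0)\to X$ (i.e. $\gamma'(0)\ne 0$) and $\mathrm{Diff}_k$ is the group of $k$-jets of biholomorphisms of $(\CC,0)$ acting by reparametrization. The map $\gamma\mapsto \gamma(\Spec \CC[z]/z^{k+1})$ is $\mathrm{Diff}_k$-invariant and induces a bijection of orbits onto the curvilinear locus; the fibre of $\jetreg{1}{X}$ over $x\in X$ is the regular part of $\bigoplus_{l=1}^{k}\Sym^l\CC^{*}\otimes T_xX$.

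Next, since $\mathrm{Diff}_k$ is non-reductive, with Levi factor $\CC^{*}$ and unipotent radical $U_k$, GIT does not apply directly, so I would use the ``test curve'' compactification coming from global singularity theory. The derivatives of $\gamma$ assemble into a linear map $\CC^{k}\to T_xX$, and the image (together with Leibniz-type relations) lies on a single orbit of the upper triangular Borel $B_k\subset \GL_k(\CC)$ inside $\Hom(\CC^{k},T_xX)$. The closure of this Borel orbit provides an algebraic compactification which realises $\overline{CX}^{[k+1]}$ birationally. Under this identification, the fibre of the tautological bundle $F^{[k+1]}$ acquires an explicit $B_k$-equivariant description whose Chern roots are the expressions $z_i+\theta_j$ (and $\theta_j$) featuring in $M(c_i(z_i+\theta_j,\theta_j))$, where $\theta_1,\ldots,\theta_r$ are the Chern roots of $F$ on $X$ and $z_1,\ldots,z_k$ are the weights of the maximal torus of $B_k$.

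With this model in place, the tautological integral becomes a $B_k$-equivariant push-forward along the fibre bundle $\jetreg{1}{X}\to X$, which I would evaluate by the B\'erczi--Szenes iterated residue formula for non-reductive Borel quotients. The Vandermonde $\prod_{i<j}(z_i-z_j)$ is the Weyl denominator of the torus in $\GL_k$; the factors $\prod_{i+j\le l\le k}(z_i+z_j-z_l)^{-1}$ collect the weights of $U_k$ on the jet space (these are exactly the Leibniz weights $z_i+z_j-z_l$ of the reparametrization action); the factor $(z_1\cdots z_k)^{-n}\prod_i s_X(1/z_i)$ records the equivariant Euler class of $T_xX$ weighted by $z_i$, converted to Segre classes via $1/c_{\mathrm{top}}(T_xX\otimes\CC_{z_i})=z_i^{-n}s_X(1/z_i)$ and combined with the $X$-integral; and the sign $(-1)^{nk}$ is the orientation correction from the residue at infinity in $n$-fold Euler classes. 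The polynomial $Q_k(\bz)$ enters as the equivariant Poincar\'e dual of the Borel orbit corresponding to regular (Morin) map germs inside the ambient homomorphism space; it is a known invariant computed in the theory of Thom polynomials of Morin singularities.

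The main obstacle will be the precise matching between $\overline{CX}^{[k+1]}$ and the Borel-orbit compactification, together with the proof that the $B_k$-equivariant Poincar\'e dual of the curvilinear orbit is indeed the Morin polynomial $Q_k$. One must show that the projection from the test-curve model to $\overline{CX}^{[k+1]}$ is generically an isomorphism, that the boundary strata do not contribute to the residue, and that the equivariant Chern-class description of $F^{[k+1]}$ on the model is correct on the dense curvilinear orbit. Once these geometric inputs are in place, the extraction of the coefficient of $(z_1\cdots z_k)^{-1}$ in the expansion at $z_1\ll\cdots\ll z_k$ is a formal consequence of the residue formula.
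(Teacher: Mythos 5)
Your outline correctly identifies the test-curve model, the non-reductive quotient description of the curvilinear locus, and the rough provenance of each factor in the residue formula, and it matches the paper's overall strategy. However, it defers precisely the two steps that constitute the actual proof, and one of them is missing entirely. First, the claim that ``the boundary strata do not contribute to the residue'' is not a technicality to be checked at the end: it is the Residue Vanishing Theorem, which is the heart of the argument. After the two-step localisation (Atiyah--Bott on the flag bundle $\flag_k(\CC^n)$ over which the partial resolution $\widetilde{CX}^{[k+1]}_p$ fibres, followed by Rossmann's formula on the singular fibre $\overline{P_{k,n}\cdot p_k}$), the sum runs over all admissible fixed points $\bipi$, and one does \emph{not} know which of them lie in the orbit closure. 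The vanishing of all terms except $\bipi_{\dist}=([1],\ldots,[k])$ requires a delicate degree count on the numerator and denominator of each iterated residue, using the bound $\deg_{\lambda_m}\epd{\Sigma,W}\le\deg(\eta_1,\ldots,\eta_N;m)$ together with the divisibility $(z_\tau-z_m)\mid Q_\bipi$ for the relevant weights; none of this is supplied or even sketched in your proposal. Relatedly, your placement of $Q_k$ is imprecise: it is the equivariant dual of $\overline{B_k\epsilon}$ in $N_k\subset\Hom(\CC^k,\Sym^2\CC^k)$, arising as the equivariant multiplicity of the tangent cone at the distinguished fixed point, not a dual taken ``inside the ambient homomorphism space'' $\Hom(\CC^k,T_xX)$.

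Second, and more seriously, your proposal does not address the range $k+1>n$ at all, yet the theorem is stated for all $k\ge1$. The degree estimates in the vanishing argument require $k\le n-1$, and the partial resolution over $\flag_k(\CC^n)$ does not even exist for $k>n$, so the strategy you describe simply fails when the number of points exceeds the dimension. The paper resolves this by embedding $\CHilb^{k+1}_0(\CC^n)$ into $\CHilb^{k+1}_0(\CC^{k+1})$ as the zero section of a $T_{k+1}$-equivariant bundle with fibre $\Hom(\CC^k,\CC_{[k+1-n]})$, proving the formula in the larger-dimensional model, and then dividing out by the equivariant Euler class $\prod_{i=1}^k\prod_{j=n+1}^k(\lambda_j-z_i)$; this is exactly what produces the truncated Segre series $s_X(1/z_i)$ in the final formula. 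Without this dimension-increasing step your argument establishes the formula only for $k+1\le n$.
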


Explanation and features of the residue formula:
\begin{itemize}
\item The iterated residue gives a degree $n$ symmetric polynomial in Chern roots of $F$ and Segre classes of $X$ reproving Theorem \ref{rennemo} This shows that the dependence on Chern classes of $X$ in fact can be expressed via the Segre classes of $X$.
\item For fixed $k$ the formula gives a universal generating series for the integrals as the dimension increases. 
\item The Chern class $c_i(z_i+\theta_j,\theta_j)$ is the coefficient of $t^i$ in 
\[c(F^{[k+1]})(t)=\prod_{j=1}^r(1+\theta_jt)\prod_{i=1}^k\prod_{j=1}^r(1+z_it+\theta_jt),\] 
that is, the $i$th Chern class of the bundle with formal Chern roots $\theta_j,z_i+\theta_j$. 
\item The quick description of $Q_k$ is the following. The $\GL_k$-module
  of 3-tensors  \\$\mathrm{Hom}(\CC^k,\mathrm{Sym}^2\CC^k)$ has a diagonal decomposition
\[ \mathrm{Hom}(\CC^k,\mathrm{Sym}^2\CC^k)=\bigoplus \CC q^{mr}_l,\;  1\leq m,r,l \leq k,
\]
where the $T_k$-weight of $q^{mr}_l$ is $(z_m+z_r-z_l)$. Let 
\[\epsilon=\sum_{m=1}^k\sum_{r=1}^{k-m}q^{mr}_{m+r} \subset W=\bigoplus_{1\leq m+r\leq l\leq k} \CC q^{mr}_l\subset \mathrm{Hom}(\CC^k,\mathrm{Sym}^2\CC^k).\]
Then $Q_k(\bz)=\epd{\overline{B_k\epsilon},W}$ is the equivariant Poincar\'e dual of the Borel orbit $\overline{B_k\epsilon}$ in $W$, see \S\ref{sec:final} for details.The list of these polynomials begins as follows:
\[Q_{1}=Q_2=Q_3=1,\ Q_4=2z_1+z_2-z_4.\]
In principle, $Q_k$ may be calculated for each concrete $k$ using a
computer algebra program, but at the moment, we do not have an
efficient algorithm for performing such calculations for large $k$ and $Q_k$ is known for $k\le 6$, see \S\ref{sec:final}.
\end{itemize}

The intersection theory of the Hilbert scheme of points on surfaces has been extensively studied and it 
can be approached from different directions. One is the inductive recursions set up in \cite{egl}, an other possibility is using Nakajima calculus \cite{nakajima,lehn}. By these methods, the integration of tautological classes is reduced to a combinatorial problem. Another strategy is to prove an equivariant version of Lehn's conjecture for the Hilbert scheme of points of $\CC^2$ via appropriately weighted sums over partitions. More recently Marian, Oprea and Pandharipande proved a conjecture of Lehn \cite{lehn} on integrals of top Segre classes of tautological bundles over the Hilbert schemes of points over surfaces in the K3 case via virtual localisation on the Quot schemes of the surface.  

In this paper we suggest a new approach by taking a look at Hilbert schemes of points from a different perspective.  We work in arbitrary dimension and not just over surfaces. Of course, for $n\ge 3$ not much is known about the irreducible components and singularities of the punctual Hilbert scheme $\Hilb^k_0(\CC^n)$ so we only focus on the curvilinear component. The crucial observation is that the for $k\ge 1$ the punctual curvilinear locus $CX^{[k+1]}_p$ at $p\in X$ can be described as the non-reductive quotient of $k$-jets of holomorphic map germs $(\CC,0) \to (X,p)$ by polynomial reparametrisations of $\CC$ at the origin. If $u,v$ are positive integers let $J_k(u,v)$ denote the vector space of $k$-jets of holomorphic maps $(\CC^u,0)\to (\CC^v, 0)$ at the origin, that is, the set of equivalence classes of maps $f:(\CC^u,0)\to (\CC^v,0)$ with $f'\neq 0$, where $f\sim g$ if and only if $f^{(j)}(0)=g^{(j)}(0)$ for all $j=1,\ldots, k$. One can compose map-jets via substitution and elimination of terms
of degree greater than $k$; this leads to the composition maps
\begin{equation}\label{composition}
J_k(u,v) \times J_k(v,w) \to J_k(u,w).
\end{equation}
In particular, if $\jetreg uv$ denotes the jets $f=(f',\ldots f^{(k)})$ with $f'\neq 0$ (the regular jets) then \eqref{composition} defines an action of the reparametrisation group $\jetreg 11$ on the regular jets $\jetreg 1n$. The punctual curvilinear locus (as a set) can be identified with the quasi-projective quotient 
\[CX^{[k+1]}_p\simeq \jetreg 1n/\jetreg 11\]
and the curvilinear Hilbert scheme is a fibrewise projective compactification of this non-reductive quotient over $X$ as $p$ moves on $X$.

Using an algebraic model coming form global singularity theory (we call this the test curve model) we reinterpret the natural embedding of the punctual Hilbert scheme $X^{[k+1]}_p=\Hilb^{k+1}_0(\CC^n)$ into the Grassmannian of codimension $k$ subspaces in the maximal ideal $\mathfrak{m}=(x_1,\ldots, x_n)$ as a parametrised map 
\[\phi^\grass:\overline{CX}^{[k+1]}_p \hookrightarrow \grass_k(\symdot) \text{ where } \symdot=\bigoplus_{i=1}^k \sym^i \CC^n.\]
We then apply a two-step equivariant localisation on the fibre $\overline{CX}^{[k+1]}_p$ following the strategy of \cite{bsz}. However, for tautological integrals we need to modify the proof in \cite{bsz} in two crucial points:
\begin{itemize}
\item First, the main obstacle to apply localisation directly is that we don't know which fixed points of the ambient Grassmannian sit in the image $\overline{CX}^{[k+1]}_p$. However, for $k+1\le n$ we prove in \cite{bsz} a residue vanishing theorem which tells that after transforming the localisation formula into an iterated residue only one distinguished fixed point of the torus action contributes to the sum. This mysterious property remains valid for tautological integrals but its proof needs a more detailed study of the rational differential form. 
\item Second, we need to extend the formula to the domain where $k+1>n$, that is, the number of points is larger than the dimension. The trick here is to increase the dimension of the variety and study $\Hilb^{k+1}_0(\CC^n)$ as a subvariety of $\Hilb^{k+1}_0(\CC^{k+1})$. 
 \end{itemize}
 
 The developed method reflects a surprising feature of curvilinear Hilbert schemes: in order to evaluate tautological integrals and make the residue vanishing principle work we need to increase the dimension of the variety first and work in the range where the number of points does not exceed the dimension. 
 
\noindent\textbf{Acknowledgments} I warmly thank Frances Kirwan and J$\o$rgen Vold Rennemo for the valuable discussions. This paper has outgrown from \cite{bsz} and my special thanks go to Andr\'as Szenes.

\section{Tautological integrals}


Let $X$ be a smooth projective variety of dimension $n$ and let $F$ be a rank $r$ bundle (loc. free sheaf) on $X$.  
Let 
\[X^{[k]}=\{\xi \subset X:\dim(\xi)=0,\mathrm{length}(\xi)=\dim H^0(\xi,\calo_\xi)=k\}\]
denote the Hilbert scheme of $k$ points on $X$ parametrizing length $k$ subschemes of $X$ and $F^{[k]}$ the corresponding rank $rk$ bundle on $X^{[k]}$ whose fibre over $\xi \in X^{[k]}$ is $F \otimes \calo_{\xi}=H^0(\xi,F|_\xi)$. 
  
 Equivalently, $F^{[k]}=q_*p^*(F)$ where $p,q$ denote the projections from the universal family of subschemes $\mathcal{U}$ to $X$ and $X^{[k]}$ respectively: 
 \[X^{[k]}\times X \supset \xymatrix{\mathcal{U} \ar[r]^-q \ar[d]^-p & X^{[k]} \\ X & }.\]
  
For simplicity let $\Hilb^k_0(\CC^n)$ denote the punctual Hilbert scheme of $k$ points on $\CC^n$ defined as the closed subset of $\Hilb^k(\CC^n)$ parametrising subschemes supported at the origin. Following Rennemo \cite{rennemo} we define punctual geometric subsets to be the constructible subsets of the punctual Hilbert scheme containing all $0$-dimensional schemes of given isomorphism types.
\begin{definition} A punctual geometric set is a constructible subset $Q\subseteq \Hilb^k_0(\CC^n)$ which is the union of isomorphism classes of subschemes, that is, if $\xi \in Q$ and $\xi' \in \Hilb^k_0(\CC^n)$ are isomorphic schemes then $\xi' \in Q$. 
\end{definition}

\begin{definition} For an $s$-tuple $(Q_1,\ldots, Q_s)$ of punctual geometric sets such that $Q_i\subseteq \Hilb^{k_i}_0(\CC^n)$ and $k=\sum k_i$ define
\[P(Q_1,...,Q_s)=\{\xi \in X^{[k]}:\xi=\xi_1 \sqcup  \ldots \sqcup \xi_s \text{ where } \xi_i\in X^{[k_i]}_{p_i}\cap Q_i \text{  for distinct } p_1,\ldots, p_s\}\subseteq X^{[k]}.\]
A subset $\calz \subseteq X^{[k]}$ is geometric if it can be expressed as finite union, intersection and complement of sets of the form $P(Q_1,\ldots ,Q_s)$.
\end{definition}

A straightforward way to produce punctual geometric subsets is by taking a complex algebra $A$ of complex dimension $k$ and define the corresponding 
\[Q_A=\{\xi \in X^{[k]}: \calo_\xi \simeq A\}.\]  
When $A=\CC[z]/z^{k}$ then $Q_A=CX^{[k]}_p$ is the punctual curvilinear locus defined in the next section and   
\[\overline{CX}^{[k]}=\cup_{p\in X} \overline{CX}^{[k]}_p\]
 is the curvilinear Hilbert scheme, the central object of this paper.  

In this paper we work with singular homology and cohomology with rational coefficients. For a smooth manifold $X$ the degree of a class $\eta \in H_*(X)$ means its pushforward to $H_*(pt)=\QQ$. By choosing  $\alpha_\eta \in \Omega^{top}(X)$,  a closed compactly supported differential form representing the cohomology class $\eta$ this degree is equal to the integral  
\[\mu \cap [X]=\int_{X} \mu.\]

Let $\calz \subset X^{[k]}$ be a geometric subset with closure $\overline{\calz}$ and $M(c_1,\ldots, c_{rk})$ be a monomial in the Chern classes $c_i=c_i(F^{[k]})$ of weighted degree equal to $\dim \overline{\calz}$ where the weight of $c_i$ is $2i$. By choosing $\alpha_M \in \Omega^*(X^{[k]})$, a closed compactly supported differential form representing the cohomology class of $M(c_1,\ldots, c_{rk})$, the degree  
\[[\overline{\calz}]\cap M(c_1,\ldots, c_{rk})=\int_{\overline{\calz}} \alpha_M\]
is called a tautological integral of $F^{[k]}$. Here the integral of $\alpha_M$ on the smooth part of $\overline{\calz}$ is absolutely convergent and by definition we denote this by $\int_{\overline{\calz}} \alpha_M$.

\begin{remark}\label{remark:pullback}
Recall (see e.g. \cite{botttu}) that if $f:X\to Y$ is a smooth proper
map between connected oriented manifolds such that $f$ restricted to
some open subset of $X$ is a diffeomorphism, then for a compactly
supported form $\mu$ on $Y$, we have $\int_X   f^*\mu= \int_Y\mu$.
The analogous statement for singular varieties is the following. Let $f:M\to N$ be a 
smooth proper map between smooth quasiprojective
varieties and assume that $X\subset M$ and $Y\subset N$ are possibly
singular closed subvarieties, such that $f$ restricted
to $X$ is a birational map from $X$ to $Y$. If $\mu$ is a closed differential form on $N$ then the integral of $\mu$ on the smooth part of
$Y$ is absolutely convergent; we denote this by $\int_Y\mu$. With this
convention we again have
$\int_X   f^*\mu= \int_Y\mu$.

In particular this means means that the integral $\int_Y \mu$ of the compactly supported form $\mu$ on $N$ is the same as the integral $\int_{\tilde{Y}}f^*\mu$ of the pull-back form $f^*\mu$ over any (partial) resolution $f:(\tilde{Y},\tilde{M}) \to (Y,M)$.

\end{remark}
 
 In \S\ref{sec:resolutions} we construct an embedding $\overline{CX^{[k+1]}}_p \subset \grass_k(\symdot)$ into a smooth Grassmannian and for $k\le n$ we construct a partial resolution $\widetilde{CX}^{[k]}_p \to \overline{CX}^{[k]}_p$. In \S\ref{sec:locsnowman} we develop the iterated residue formula of Theorem \ref{main} using equivariant localisation to compute $\int_{\widetilde{CX}^{[k]}}P(c_i(F^{[n]}))$ which is according to the remark above equal to $\int_{\overline{CX}^{[k]}}P(c_i(F^{[n]}))$.

\section{Curvilinear Hilbert schemes}\label{sec:curvilinear}
In this section we describe a geometric model for curvilinear Hilbert schemes. Let $X$ be a smooth projective variety of dimension $n$ and let  
\[X^{[k]}=\{\xi \subset X:\dim(\xi)=0,\mathrm{length}(\xi)=\dim H^0(\xi,\calo_\xi)=k\}\]
denote the Hilbert scheme of $k$ points on $X$ parametrizing all length $k$ subschemes of $X$. For $p\in X$ let 
\[X^{[k]}_p=\{ \xi \in X^{[k]}: \mathrm{supp}(\xi)=p\}\]
denote the punctual Hilbert scheme consisiting of subschemes supported at $p$. If $\rho: X^{[k]} \to S^kX$, $\xi \mapsto \sum_{p\in X}\mathrm{length}(\calo_{\xi,p})p$ denotes the Hilbert-Chow morphism then $X^{[k]}_p=\rho^{-1}(kp)$.
\begin{definition}
A subscheme $\xi \in X^{[k]}_p$ is called curvilinear if $\xi$ is contained in some smooth curve $C\subset X$. Equivalently, one might say that $\calo_\xi$ is isomorphic  to the $\CC$-algebra $\CC[z]/z^{k}$.
The punctual curvilinear locus at $p\in X$ is the set of curvilinear subschemes supported at $p$: 
\[CX^{[k]}_p=\{\xi \in X^{[k]}_p:\xi \subset \mathcal{C}_p \text{ for some smooth curve } \mathcal{C} \subset X\}=\{\xi \in X^{[k]}_p:\calo_\xi \simeq \CC[z]/z^{k}\}.\]
\end{definition}

For surfaces ($n=2$) $CX^{[k]}_p$ is an irreducible quasi-projective variety of dimension $n-1$ which is an open dense subset in $X^{[k]}_p$ and therefore its closure is the full punctual Hilbert scheme at $p$, that is, $\overline{CX}^{[k]}_p=X^{[k]}_p$. When $n\ge 3$ the punctual Hilbert scheme $X^{[k]}_p$ is not necessarily irreducible or reduced, but the closure of the curvilinear locus is one of its irreducible components:  

\begin{lemma} $\overline{CX^{[k]}_p}$ is an irreducible component of the punctual Hilbert scheme $X^{[k]}_p$ of dimension $(n-1)(k-1)$. 
\end{lemma}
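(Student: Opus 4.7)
The plan is three-fold: present $CX^{[k]}_p$ as a non-reductive quotient of jet spaces, read off irreducibility and dimension from that presentation, and then verify the irreducible-component claim by a local tangent-space computation at one carefully chosen curvilinear point.

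After fixing analytic coordinates identifying $(X,p)$ with $(\CC^n,0)$, a curvilinear ideal $I_\xi\subset\calo_{X,p}$ is precisely the kernel of a surjective $\CC$-algebra homomorphism $\phi^*:\calo_{X,p}\to\CC[z]/z^k$. Such a map is determined by the $n$-tuple $\phi=(\phi_1,\ldots,\phi_n)$ with $\phi_i:=\phi^*(x_i)\in (z)\subset\CC[z]/z^k$, and it is surjective exactly when $\phi\in J_{k-1}(1,n)$ is a regular jet (so that $z$ lies in the image). Two regular jets determine the same ideal iff they differ by right-composition with a reparametrisation $\psi\in J_{k-1}^{\mathrm{reg}}(1,1)$; for regular $\phi$ the relation $\phi\circ\psi=\phi$ forces $\psi=\mathrm{id}$ (by formally inverting a coordinate $\phi_i$ with $\phi_i'(0)\ne 0$), so the action is free. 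Hence
\[
CX^{[k]}_p\;\cong\;J_{k-1}^{\mathrm{reg}}(1,n)\,/\,J_{k-1}^{\mathrm{reg}}(1,1).
\]
Since $J_{k-1}^{\mathrm{reg}}(1,n)$ is a Zariski-open subset of $\CC^{n(k-1)}$ (hence irreducible) and the reparametrisation group $J_{k-1}^{\mathrm{reg}}(1,1)\cong\CC^*\times\CC^{k-2}$ is smooth, connected, and of dimension $k-1$, the quotient (and thus its closure $\overline{CX^{[k]}_p}$) is irreducible of dimension $n(k-1)-(k-1)=(n-1)(k-1)$.

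To show that $\overline{CX^{[k]}_p}$ is actually an irreducible \emph{component} of $X^{[k]}_p$, rather than merely a closed irreducible subvariety, I would exhibit a single $\xi\in CX^{[k]}_p$ whose Zariski tangent space $T_\xi X^{[k]}_p$ already has dimension $(n-1)(k-1)$; this forces $\xi$ to be a smooth point lying on a unique irreducible component. Choose coordinates so that $I_\xi=(x_2,\ldots,x_n,x_1^k)$, a complete intersection. Then $I_\xi/I_\xi^2$ is free of rank $n$ over $\calo_\xi$, giving $T_\xi X^{[k]}=\Hom_{\calo_X}(I_\xi,\calo_\xi)\cong\calo_\xi^{\,n}$ of dimension $nk$; its elements correspond to first-order deformations $I_t=(x_2+tb_2,\ldots,x_n+tb_n,x_1^k+tb_0)$ with $b_i\in\calo_\xi=\CC[x_1]/x_1^k$. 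Since $X^{[k]}_p$ is cut out in $X^{[k]}$ by the vanishing of the trace functions $\tau_f(\xi):=\mathrm{Tr}_{\calo_\xi}(f)$ for $f\in\mathfrak{m}_p$ (these generate, via the Hilbert-Chow morphism, the pullback of the ideal of $k\cdot p\in S^kX$), the punctual tangent space is the kernel in $T_\xi X^{[k]}$ of the linear system $\{d\tau_f:f\in\mathfrak{m}_p\}$.

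I would then evaluate these differentials using the basis $\{1,x_1,\ldots,x_1^{k-1}\}$ of $\calo_{\xi_t}$ as a free $\CC[t]/t^2$-module. In this basis multiplication by $x_1$ is the companion matrix of the polynomial $T^k+t\sum_{j=0}^{k-1}b_{0,j}T^j$, so Newton's identities give the power sums $\mathrm{Tr}(x_1^j)\equiv -jt\,b_{0,k-j}\pmod{t^2}$ for $1\le j\le k$, producing $k$ independent conditions that force $b_0\equiv 0$. The identities $x_i=-tb_i$ for $i\ge 2$ give $\mathrm{Tr}(x_i)=-tk\,b_{i,0}$, producing $n-1$ further independent conditions $b_{i,0}=0$. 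No other $f\in\mathfrak{m}_p$ contributes an independent constraint: any product $x_ix_j$ with $i,j\ge 2$ is automatically $O(t^2)$ in $\calo_{\xi_t}$, while $\mathrm{Tr}(x_1^ax_i)$ for $a\ge 1, i\ge 2$ reduces to a combination of $\mathrm{Tr}(x_1^m)$ with $m\ge 1$, each of which already vanishes at $\xi$, leaving only an $O(t^2)$ remainder. Thus $k+(n-1)$ independent conditions cut $T_\xi X^{[k]}$ down to dimension $nk-k-(n-1)=(n-1)(k-1)$, matching $\dim\overline{CX^{[k]}_p}$ and proving the lemma. The main obstacle is this trace bookkeeping: verifying carefully that no higher-degree element of $\mathfrak{m}_p$ yields an extra independent linear constraint beyond those from the generators $x_i$ ($i\ge 2$) and from the power sums $\mathrm{Tr}(x_1^j)$ for $1\le j\le k$.
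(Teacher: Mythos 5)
Your proposal is correct, but it reaches the ``irreducible component'' conclusion by a genuinely different route than the paper. The two arguments agree on the first half: both identify $CX^{[k]}_p$ with the non-reductive quotient $J_{k-1}^{\reg}(1,n)/J_{k-1}^{\reg}(1,1)$ and read off irreducibility and the dimension $n(k-1)-(k-1)=(n-1)(k-1)$ from there. For the component claim, however, the paper uses a soft openness argument: a length-$k$ scheme $\xi$ fails to be curvilinear exactly when $I_\xi$ contains a power of the maximal ideal, which is a closed condition, so $CX^{[k]}_p$ is \emph{open} in $X^{[k]}_p$ and its closure is therefore a component. You instead do an explicit first-order deformation computation at the monomial complete intersection $(x_2,\ldots,x_n,x_1^k)$, using the trace functions cutting out the Hilbert--Chow fibre to bound $\dim T_\xi X^{[k]}_p$ above by $(n-1)(k-1)$. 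This is more work but buys more: it shows $X^{[k]}_p$ is actually smooth of dimension $(n-1)(k-1)$ at such $\xi$, hence the curvilinear component is generically reduced, which the openness argument does not give. Two small remarks on your write-up. First, the ``main obstacle'' you flag at the end is not actually an obstacle: for the lemma you only need the \emph{upper} bound on $\dim T_\xi X^{[k]}_p$, since the lower bound $(n-1)(k-1)$ is automatic from $\overline{CX^{[k]}_p}\subseteq X^{[k]}_p$ and the smoothness of the free quotient at $\xi$; extra independent constraints from higher-degree traces cannot occur, and you need not verify this. Second, and for the same reason, you should only assert the inclusion $T_\xi X^{[k]}_p\subseteq\ker\{d\tau_f\}$ (which follows from the $\tau_f$ vanishing on $X^{[k]}_p$) rather than equality, which would require knowing the traces cut out the punctual fibre scheme-theoretically.
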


\begin{proof}
Note that $\xi \in \Hilb^{[k]}_0(\CC^n)$ is not curvilinear if and only if $\calo_\xi$ does not contain elements of degree $k$, that is, after fixing some local coordinates $x_1,\ldots, x_n$ of $\CC^n$ at the origin we have
\[\calo_\xi \simeq \CC[x_1,\ldots, x_n]/I \text{ for some } I\supseteq (x_1,\ldots, x_n)^k.\]
This is a closed condition and therefore curvilinear subschemes can't be approximated by non-curvilinear subschemes in $\Hilb^{[k]}_0(\CC^n)$. The dimension of $\overline{CX^{[k]}_p}$ will come from the description of it as a non-reductive quotient in the next subsection.  
\end{proof}

Note that any curvilinear subscheme contains only one subscheme for any given smaller length and any small deformation of a curvilinear subscheme is again locally curvilinear.

\begin{remark}
Fix coordinates $x_1,\ldots, x_n$ on $\CC^n$. Recall that the defining ideal $I_\xi$ of any subscheme $\xi \in \Hilb^{k+1}_0(\CC^n)$ is a codimension $k$ subspace in the maximal ideal $\mathfrak{m}=(x_1,\ldots, x_n)$. The dual of this is a $k$-dimensional subspace $S_\xi$ in $\mathfrak{m}^*\simeq \symdot$ giving us a natural embedding $\varphi: X^{[k+1]}_p \hookrightarrow \grass_k(\symdot)$. In what follows, we give an explicit  parametrization of this embedding using an algebraic model coming from global singularity theory.  
\end{remark}

\subsection{Test curve model for $CX^{[k]}_p$}\label{subsec:testcurve}
If $u,v$ are positive integers let $J_k(u,v)$ denote the vector
space of $k$-jets of holomorphic maps $(\CC^u,0) \to (\CC^v,0)$ at
the origin, that is, the set of equivalence classes of maps
$f:(\CC^u,0) \to (\CC^v,0)$, where $f\sim g$ if and only if
$f^{(j)}(0)=g^{(j)}(0)$ for all $j=1,\ldots ,k$.

If we fix local coordinates $z_1,\ldots, z_u$ at $0\in \CC^u$ we can
again identify the $k$-jet of $f$ with the set of derivatives at the
origin, that is $(f'(0),f''(0),\ldots, f^{(k)}(0))$, where
$f^{(j)}(0)\in \mathrm{Hom}(\mathrm{Sym}^j\CC^u,\CC^v)$. This way we
get the equality
\begin{equation}\label{identification}
J_k(u,v)=\oplus_{j=1}^k\mathrm{Hom}(\mathrm{Sym}^j\CC^u,\CC^v).
\end{equation}
One can compose map-jets via substitution and elimination of terms
of degree greater than $k$; this leads to the composition maps
\begin{equation}
  \label{comp}
\jetk uv \times \jetk vw \to \jetk uw,\;\;  (\Psi_1,\Psi_2)\mapsto
\Psi_2\circ\Psi_1 \mbox{modulo terms of degree $>k$ }.
\end{equation}
When $k=1$, $J_1(u,v)$ may be identified with $u$-by-$v$ matrices,
and \eqref{comp} reduces to multiplication of matrices.

The $k$-jet of a curve $(\CC,0) \to (\CC^n,0)$ is simply an element
of $J_k(1,n)$. We call such a curve $\gamma$ {\em regular}, if
$\gamma'(0)\neq 0$; introduce the notation $\jetreg 1n$ for the set
of regular curves:
\[\jetreg 1n=\left\{\g \in \jetk 1n; \g'(0)\neq 0 \right\}\]
Let $\xi \in CX^{[k+1]}_p$ be a curvilinear subscheme. It is contained in a unique smooth curve germ $\mathcal{C}_p$ 
\[\xi \subset \mathcal{C}_p \subset X.\]
Let $f_{\xi}:(\CC,0)\to (X,p)$ be a $k$-jet of germ parametrising $\mathcal{C}_p$. 
Then $f_{\xi}\in \jetreg 1n$ is determined only up to polynomial reparametrisation germs $\phi: (\CC,0)\to (\CC,0)$ and therefore we get  
\begin{lemma} The curvilinear locus $CX^{[k+1]}_p$ is equal (as a set) to the set of $k$-jet of regular germs at the origin of $\CC^n$ modulo reparametrisation: 
\[CX^{[k+1]}_p=\{k\text{-jets } (\CC,0)\to (\CC^n,0)\}/\{k\text{-jets } (\CC,0)\to (\CC,0)\}=\jetreg 1n/J_k^{\reg}(1,1).\]
\end{lemma}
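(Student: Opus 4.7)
The plan is to construct an explicit bijection between $\jetreg 1n/\jetreg 11$ and $CX^{[k+1]}_p$ by dualizing the inclusion of a curvilinear subscheme into its smooth curve germ. Fix local analytic coordinates $x_1,\ldots,x_n$ at $p\in X$ and identify $\calo_{X,p}$ with $\CC\{x_1,\ldots,x_n\}$. Given a regular $k$-jet $f=(f_1(t),\ldots,f_n(t))\in\jetreg 1n$, I would define
\[I_f=\{h\in\calo_{X,p}:h(f_1(t),\ldots,f_n(t))\equiv 0\pmod{t^{k+1}}\}\]
and let $\Psi(f)$ be the subscheme at $p$ cut out by $I_f$. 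Because $\mathfrak{m}_p^{k+1}\subset I_f$, the ideal depends only on the $k$-jet of $f$, so $\Psi$ is well-defined on $\jetreg 1n$.

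Next I would check that $\Psi(f)\in CX^{[k+1]}_p$. The pullback $f^{*}:\calo_{X,p}/I_f\to\CC[t]/(t^{k+1})$ is injective by construction of $I_f$. Since $f$ is regular, some $f_i'(0)\neq 0$, so $f^{*}(x_i)=f_i(t)$ is a uniformizer of $\CC[t]/(t^{k+1})$, which forces $f^{*}$ to be surjective. Hence $\calo_{\Psi(f)}\cong\CC[t]/(t^{k+1})$ and $\Psi(f)$ is curvilinear of length $k+1$. Invariance under reparametrisation follows from the composition law~\eqref{comp}: for any $\phi\in\jetreg 11$ the jet $\phi$ is invertible modulo $t^{k+1}$, so $h(f\circ\phi)\equiv 0\pmod{t^{k+1}}$ iff $h(f)\equiv 0\pmod{t^{k+1}}$, and therefore $I_{f\circ\phi}=I_f$. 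Thus $\Psi$ descends to a map $\bar\Psi:\jetreg 1n/\jetreg 11\to CX^{[k+1]}_p$.

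For surjectivity, pick $\xi\in CX^{[k+1]}_p$ and choose a $\CC$-algebra isomorphism $\iota:\calo_\xi\to\CC[t]/(t^{k+1})$. The composite $\calo_{X,p}\twoheadrightarrow\calo_\xi\xrightarrow{\iota}\CC[t]/(t^{k+1})$ sends $x_i\mapsto f_i(t)$ for some $f_i$ with $f_i(0)=0$; surjectivity of the composite forces some $f_i'(0)\neq 0$, so $f=(f_1,\ldots,f_n)\in\jetreg 1n$ and $\bar\Psi(f)=\xi$ by construction. For injectivity, if $\bar\Psi(f)=\bar\Psi(g)$ then $f^{*}$ and $g^{*}$ are two surjections $\calo_{X,p}\to\CC[t]/(t^{k+1})$ with the same kernel, so $g^{*}=\alpha\circ f^{*}$ for a unique $\CC$-algebra automorphism $\alpha$ of $\CC[t]/(t^{k+1})$, and $g=f\circ\phi$ where $\phi\in\jetreg 11$ is determined by $\alpha(t)=\phi(t)$.

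The conceptual core of the proof is this last identification: local $\CC$-algebra automorphisms of $\CC[t]/(t^{k+1})$ are exactly the substitutions $t\mapsto a_1 t+\cdots+a_k t^k$ with $a_1\neq 0$, which is precisely the reparametrisation group $\jetreg 11$ acting on $\jetreg 1n$ via~\eqref{comp}. No serious obstacle is anticipated; the only point requiring care is to confirm that the truncated composition of $k$-jets defined in~\eqref{comp} coincides with composition of formal power series modulo $t^{k+1}$, which is immediate from the definition of the jet group.
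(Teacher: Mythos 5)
Your proof is correct and follows the same underlying idea as the paper, which disposes of this lemma in two sentences by noting that a curvilinear $\xi$ lies on a smooth curve germ whose parametrising $k$-jet is regular and determined up to reparametrisation. Your ideal-theoretic construction of $I_f$, together with the identification of $\jetreg 11$ with the local $\CC$-algebra automorphisms of $\CC[t]/(t^{k+1})$, is precisely the rigorous content of that assertion, and in fact supplies the injectivity/surjectivity details the paper leaves implicit.
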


We can explicitely write out this reparametrisation action as follows; let $f_{\xi}(z)=z f'(0)+\frac{z^2}{2!}f''(0)+\ldots +\frac{z^k}{k!}f^{(k)}(0) \in \jetreg 1n$ a $k$-jet of germ at the origin (i.e no constant term) in $\CC^n$ with $f^{(i)}\in \CC^n$ such that $f' \neq 0$ and let  
$\varphi(z)=\alpha_1z+\alpha_2z^2+\ldots +\alpha_k z^k \in \jetreg 11$ with $\alpha_i\in \CC, \alpha_1\neq 0$.  
 Then 
 \[f \circ\varphi(z)
=(f'(0)\alpha_1)z+(f'(0)\alpha_2+
\frac{f''(0)}{2!}\alpha_1^2)z^2+\ldots
+\left(\sum_{i_1+\ldots +i_l=k}
\frac{f^{(l)}(0)}{l!}\alpha_{i_1}\ldots \alpha_{i_l}\right)z^k=\]
\begin{equation}\label{jetdiffmatrix}
=(f'(0),\ldots, f^{(k)}(0)/k!)\cdot 
\left(
\begin{array}{ccccc}
\alpha_1 & \alpha_2   & \alpha_3          & \ldots & \alpha_k \\
0        & \alpha_1^2 & 2\alpha_1\alpha_2 & \ldots & 2\alpha_1\alpha_{k-1}+\ldots \\
0        & 0          & \alpha_1^3        & \ldots & 3\alpha_1^2\alpha_ {k-2}+ \ldots \\
0        & 0          & 0                 & \ldots & \cdot \\
\cdot    & \cdot   & \cdot    & \ldots & \alpha_1^k
\end{array}
 \right)
 \end{equation}
where the $(i,j)$ entry is $p_{i,j}(\bar{\alpha})=\sum_{a_1+a_2+\ldots +a_i=j}\alpha_{a_1}\alpha_{a_2} \ldots \alpha_{a_i}.$

\begin{remark}\label{naturalembedding}
The linearisation of the action of $\jetreg 11$ on $\jetreg 1n$ given as the matrix multiplication in \eqref{jetdiffmatrix} represents $\jetreg 11$ as a upper triangular matrix group in $\GL(n)$. It is parametrised along its first row with the free parameters $\alpha_1,\ldots, \alpha_k$ and the other entries are certain (weighted homogeneous) polynomials in these free parameters. It is a $\CC^*$ extension of its maximal  unipotent radical
\[\jetreg 11=U \rtimes \CC^*\]
where $U$ is the subgroup we get via substituting $\alpha_1=1$ and the diagonal $\CC^*$ acts with weights $0,1\ldots, n-1$ on the Lie algebra $\mathrm{Lie}(U)$. In \cite{bk} and \cite{bdhk} we study actions of groups of this type in a more general context. 
\end{remark}

Fix an integer $N\ge 1$ and define
\[\Theta_k=\left\{\Psi\in J_k(n,N):\exists \g \in \jetreg 1n: \Psi \circ \g=0
\right\},\]
that is, $\Theta_k$ is the set of those $k$-jets of germs on $\CC^n$ at the origin which vanish on some regular curve. By definition, $\Theta_k$ is the image
of the closed subvariety of $\jetk nN \times \jetreg 1n$ defined by
the algebraic equations $\Psi \circ \g=0$, under the projection to
the first factor. If $\Psi \circ \gamma=0$, we call $\g$ a {\em test
curve} of $\Theta$. 

\begin{remark}
The subset $\Theta_k$ is the closure of an important singularity class in the jet space $J_k(n,N)$. These are called Morin singularities and the equivariant dual of $\Theta_k$ in $J_k(n,N)$ is called the Thom polynomial of Morin singularities, see \cite{bsz} for details.
\end{remark}

Test curves of germs are generally not unique. A basic but crucial observation is the following. If $\g$ is a test
curve of $\Psi \in \Theta_k$, and $\vp \in \jetreg 11$ is a 
holomorphic reparametrisation of $\CC$, then $\g \circ \vp$ is,
again, a test curve of $\Psi$:
\begin{displaymath}
\label{basicidea}
\xymatrix{
  \CC \ar[r]^\vp & \CC \ar[r]^\g & \CC^n \ar[r]^{\Psi} & \CC^N}
\end{displaymath}
\[\Psi \circ \g=0\ \ \Rightarrow \ \ \ \Psi \circ (\g \circ \vp)=0\]

In fact, we get all test curves of $\Psi$ in this way if the
following open dense property holds: the linear part of $\Psi$ has
$1$-dimensional kernel. Before stating this in Theorem 
\ref{embedgrass} below, let us write down the equation $\Psi \circ
\g=0$ in coordinates in an illustrative case. Let
$\g=(\g',\g'',\ldots, \g^{(k)})\in \jetreg 1n$ and
$\Psi=(\Psi',\Psi'',\ldots, \Psi^{(k)})\in \jetk nN$ be the
$k$-jets of the test curve $\g$ and the map $\Psi$ respectively. Using the chain rule and the notation $v_i=\g^{(i)}/i!$, the equation $\Psi \circ \g=0$ reads
as follows for $k=4$:
\begin{eqnarray} \label{eqn4}
& \Psi'(v_1)=0, \\ \nonumber & \Psi'(v_2)+\Psi''(v_1,v_1)=0, \\
\nonumber
& \Psi'(v_3)+2\Psi''(v_1,v_2)+\Psi'''(v_1,v_1,v_1)=0, \\
&
\Psi'(v_4)+2\Psi''(v_1,v_3)+\Psi''(v_2,v_2)+
3\Psi'''(v_1,v_1,v_2)+\Psi''''(v_1,v_1,v_1,v_1)=0.
\nonumber
\end{eqnarray}


\begin{lemma}[\cite{gaffney,bsz}]\label{explgp} Let
$\g=(\g',\g'',\ldots, \g^{(k)})\in \jetreg 1n$ and
$\Psi=(\Psi',\Psi'',\ldots, \Psi^{(k)})\in \jetk nN$ be $k$-jets.
Then substituting $v_i=\g^{(i)}/i!$, the equation $\Psi\circ \g$ is equivalent to
  the following system of $k$ linear equations with values in
  $\CC^N$:
\begin{equation}
  \label{modeleq}
\sum_{\tau \in \mathcal{P}(m)} \Psi(\bv_\tau)=0,\quad m=1,2,\dots, k.
\end{equation}
Here $\mathcal{P}(m)$ denotes the set of partitions $\tau=1^{\tau_1}\ldots m^{\tau_m}$ of $m$ into nonnegative integers and $\bv_\tau=v_1^{\tau_1}\cdots v_{m}^{\tau_m}$. 
\end{lemma}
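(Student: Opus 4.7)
The lemma is a direct instance of the multivariable Fa\`a di Bruno formula, so my plan is to simply expand the composition $\Psi\circ\g$ as a power series in $z$ modulo $z^{k+1}$ and read off the coefficients. First I would fix notation: by \eqref{identification} the components $\Psi^{(j)}\in\Hom(\Sym^j\CC^n,\CC^N)$ are viewed as symmetric $j$-linear forms on $\CC^n$, with the normalisation $\Psi(x)\equiv\sum_{j=1}^k\Psi^{(j)}(x,\ldots,x)$ modulo $(x)^{k+1}$ (this convention is forced by the explicit case $k=4$ in \eqref{eqn4}, where no extra $1/j!$ is present). Writing $\g(z)\equiv\sum_{i=1}^k v_iz^i$ modulo $z^{k+1}$ with $v_i=\g^{(i)}/i!$ is immediate from Taylor's formula.

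Next I would substitute $\g(z)$ into $\Psi$ and multiply out, using $\jetreg{1}{1}$-multilinearity of each $\Psi^{(j)}$:
\[
(\Psi\circ\g)(z)\equiv\sum_{j=1}^{k}\sum_{i_1,\ldots,i_j\ge 1}\Psi^{(j)}(v_{i_1},\ldots,v_{i_j})\,z^{i_1+\cdots+i_j}\pmod{z^{k+1}}.
\]
The jet identity $\Psi\circ\g=0$ is equivalent to the vanishing of the coefficient of $z^m$ for each $1\le m\le k$, so I only need to reorganise the inner sums indexed by the tuples $(i_1,\ldots,i_j)$ with $i_1+\cdots+i_j=m$.

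The key combinatorial step is to group these tuples by the partition they represent. For a partition $\tau=1^{\tau_1}2^{\tau_2}\cdots m^{\tau_m}$ of $m$ with $|\tau|=\sum\tau_l=j$ parts, the number of ordered tuples $(i_1,\ldots,i_j)$ producing that multiset equals the multinomial $\binom{j}{\tau_1,\ldots,\tau_m}=j!/(\tau_1!\cdots\tau_m!)$. By the symmetry of $\Psi^{(j)}$ all such tuples contribute equally, and the multinomial is precisely the factor absorbed into the symmetric-tensor notation $\bv_\tau=v_1^{\tau_1}\cdots v_m^{\tau_m}\in\Sym^{|\tau|}\CC^n$ (one checks against \eqref{eqn4} that this is consistent: for instance $\tau=1^1 2^1$ in $m=3$ contributes the coefficient $2$ to $\Psi''(v_1,v_2)$). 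Therefore the coefficient of $z^m$ in $\Psi\circ\g$ is exactly $\sum_{\tau\in\mathcal{P}(m)}\Psi(\bv_\tau)$, which gives \eqref{modeleq}.

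The only point requiring care is matching the factorial normalisations in the definitions of $\Psi^{(j)}$ and of the symmetric product $\bv_\tau$; once these are pinned down by the displayed example, the argument is a purely combinatorial bookkeeping exercise with no analytic content, and the equivalence claimed in the lemma follows.
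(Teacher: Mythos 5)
Your proof is correct: the direct expansion of $\Psi\circ\g$ modulo $z^{k+1}$, grouping ordered tuples $(i_1,\ldots,i_j)$ with $i_1+\cdots+i_j=m$ by the partition they determine, is exactly the standard argument, and your normalisation checks against \eqref{eqn4} (e.g.\ the coefficients $2$ and $3$ arising as multinomials $2!/(1!\,1!)$ and $3!/(2!\,1!)$) pin down the conventions correctly. The paper itself gives no proof of this lemma --- it only cites Gaffney and B\'erczi--Szenes and displays the $k=4$ case as an illustration --- so your write-up supplies precisely the bookkeeping that the paper leaves implicit.
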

For a given $\g \in \jetreg 1n$ and $1\le i \le k$ let $\mathcal{S}^{i,N}_{\g}$ denote the set of 
solutions of the first $i$ equations in \eqref{modeleq}, that is,
\begin{equation}\label{solutionspace}
\mathcal{S}^{i,N}_\g=\left\{\Psi \in \jetk nN, \Psi \circ \g=0 \text{ up to order } i \right\}
\end{equation}
The equations \eqref{modeleq} are linear in $\Psi$, hence
\[\mathcal{S}^{i,N}_\g \subset \jetk nN\]
is a linear subspace of codimension $iN$, i.e a point of $\grass_{\mathrm{codim}=iN}(J_k(n,N))$, whose dual, $(\mathcal{S}^{i,N}_{\g})^*$, is an $iN$-dimensional subspace of $J_k(n,N)^*$. These subspaces are invariant under the reparametrization of $\g$. In fact, $\Psi \circ \gamma$ has $N$ vanishing coordinates and therefore
\[\mathcal{S}^{i,N}_{\g}=\mathcal{S}^{i,1}_{\g} \otimes \CC^N\]
holds. 

For $\Psi \in J_k(n,N)$ let $\Psi^1 \in \Hom(\CC^n,\CC^N)$ denote the linear part. When $N\ge n$ then the subset 
\[\tilde{\mathcal{S}}^{i,N}_{\g}=\{\Psi \in \mathcal{S}^{i,N}_{\g}: \dim \ker \Psi^1=1\}\]
is an open dense subset of the subspace $\mathcal{S}^{i,N}_{\g}$. In fact it is not hard to see that the complement $\tilde{\mathcal{S}}^{i,N}_{\g}\setminus \mathcal{S}^{i,N}_{\g}$where the kernel of $\Psi^1$ has dimension at least two is a closed subvariety of codimension $N-n+2$.

Note that for $N=1$, according to \eqref{identification}, the dual space $J_k(n,1)^*$ can be and will be identified with 
\[\Hom(\CC,\mathrm{Sym}^{\le n}(\CC^n)^*)\simeq \symdot\]
where $\symdot=\bigoplus_{i=1}^k \sym^i \CC^n$ and we identified $\CC^n$ with its dual. 

\begin{theorem}\label{embedgrass} \begin{enumerate}
\item The map
\[\phi: \jetreg 1n \rightarrow \grass_k(\symdot)\]
defined as  $\gamma  \mapsto (\mathcal{S}^{k,1}_\g)^*$
is $\jetreg 11$-invariant and induces an injective map on the $\jetreg 11$-orbits into the Grassmannian 
\[\phi^\grass: CX^{[k+1]}_p=\jetreg 1n /\jetreg 11 \hookrightarrow \grass_k(\symdot).\]
Moreover, $\phi$ and $\phi^\grass$ are $\GL(n)$-equivariant with respect to the standard action of $\GL(n)$ on $\jetreg 1n \subset \Hom(\CC^k,\CC^n)$ and the induced action on $\grass_k(\symdot)$.
\item The image of $\phi$ and the image of $\varphi$ defined in Remark \ref{naturalembedding} coincide in $\grass_k(\symdot)$:
\[\mathrm{im}(\phi)=\mathrm{im}(\varphi)\subset \grass_k(\symdot).\]
\end{enumerate}
\end{theorem}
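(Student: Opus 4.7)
The plan is to identify $\mathcal{S}^{k,1}_\gamma$ with the truncated ideal of the curvilinear subscheme cut out by $\gamma$, so that $\phi$ literally coincides with the natural Grassmannian embedding $\varphi$ precomposed with the assignment $\gamma\mapsto\xi_\gamma$. Once this ``jet vs.\ scheme'' dictionary is in place, the $\jetreg 11$-invariance, the injectivity on orbits, the $\GL(n)$-equivariance, and the image identification all become transparent.

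Under the standard identification $J_k(n,1)\simeq\mathfrak{m}/\mathfrak{m}^{k+1}$ (a jet being its Taylor polynomial with no constant term), the composition $\Psi\circ\gamma$ is just the pullback of $\Psi$ along the curve germ $\gamma$, truncated modulo $z^{k+1}$. For $\gamma\in\jetreg 1n$, regularity implies that $\gamma^*:\calo_{\CC^n,0}\to\CC[[z]]$ surjects modulo $(z^{k+1})$, and its kernel modulo $\mathfrak{m}^{k+1}$ is the ideal of a curvilinear length-$(k+1)$ subscheme $\xi_\gamma\subset X$ at $p$, with $\calo_{\xi_\gamma}\simeq\CC[z]/z^{k+1}$. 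Hence
\[\mathcal{S}^{k,1}_\gamma=I_{\xi_\gamma}/\mathfrak{m}^{k+1},\qquad \phi(\gamma)=\bigl(I_{\xi_\gamma}/\mathfrak{m}^{k+1}\bigr)^{\perp}=S_{\xi_\gamma}=\varphi(\xi_\gamma).\]

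For part (i), $\jetreg 11$-invariance follows because any reparametrisation $\psi\in\jetreg 11$ induces an automorphism $\psi^*$ of $\CC[z]/z^{k+1}$ preserving $(z^{k+1})$, so $\xi_{\gamma\circ\psi}=\xi_\gamma$. For injectivity of $\phi^\grass$, if $\phi(\gamma_1)=\phi(\gamma_2)$ then $\xi_{\gamma_1}=\xi_{\gamma_2}=:\xi$; the two regular jets $\gamma_i$ induce ring isomorphisms $\bar\gamma_i^*:\calo_\xi\xrightarrow{\sim}\CC[z]/z^{k+1}$, whose comparison $\sigma=\bar\gamma_1^*\circ(\bar\gamma_2^*)^{-1}$ is an automorphism of $\CC[z]/z^{k+1}$ and thus corresponds to a unique $\psi\in\jetreg 11$; a direct diagram chase then gives $\gamma_1=\gamma_2\circ\psi$ as $k$-jets. $\GL(n)$-equivariance is immediate from $(g\cdot\Psi)\circ(g\cdot\gamma)(z)=\Psi(g^{-1}g\,\gamma(z))=\Psi\circ\gamma(z)$, whence $\mathcal{S}^{k,1}_{g\gamma}=g\cdot\mathcal{S}^{k,1}_\gamma$ and dually $\phi(g\gamma)=g\cdot\phi(\gamma)$. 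For part (ii), the identification above shows $\mathrm{im}(\phi)=\{\varphi(\xi_\gamma):\gamma\in\jetreg 1n\}=\varphi(CX^{[k+1]}_p)$, i.e.\ the image of $\phi$ coincides with the natural Grassmannian embedding restricted to the curvilinear locus.

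The most delicate point is the final bookkeeping in the injectivity argument: one must verify that the abstract automorphism $\sigma$ of $\CC[z]/z^{k+1}$ produced from the two ring isomorphisms is realised by a genuine $k$-jet reparametrisation intertwining $\gamma_1$ with $\gamma_2$ on the nose, not only formally. A computational alternative, closer in spirit to Lemma \ref{explgp}, is to write $\phi(\gamma)\subset\symdot$ as the span of the vectors $p_m(\gamma)=\sum_{\tau\in\mathcal{P}(m)}\bv_\tau$ for $m=1,\ldots,k$, where $v_i=\gamma^{(i)}/i!$, and then check directly using $v_1\neq 0$ that the filtration by leading order recovers the sequence $(v_1,\ldots,v_k)$ up to the explicit $\jetreg 11$-action of \eqref{jetdiffmatrix}.
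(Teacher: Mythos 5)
Your proof is correct, but it runs along a genuinely different track from the paper's. You set up the dictionary $\mathcal{S}^{k,1}_\gamma = I_{\xi_\gamma}/\mathfrak{m}^{k+1}$ and reduce everything to the fact that the $\CC$-algebra automorphisms of $\CC[z]/z^{k+1}$ are exactly the $\psi^*$ for $\psi\in\jetreg 11$; injectivity on orbits then comes from comparing the two isomorphisms $\bar\gamma_i^*:\calo_\xi\to\CC[z]/z^{k+1}$, and part (ii) becomes a tautology rather than an assertion. The paper instead stays entirely inside the test-curve formalism: it fixes a single $\Psi\in\Theta_k$ with $\dim\ker\Psi^1=1$ annihilating both jets and runs an induction on $k$, normalising $v_m=w_m$ for $m<k$ and then reading off from the explicit equations \eqref{modeleq} that $v_k-w_k\in\ker\Psi^1=\CC v_1$, so the correcting reparametrisation is $\Delta=t+\lambda t^k$. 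The paper's route is more elementary (no appeal to the structure of $\mathrm{Aut}(\CC[z]/z^{k+1})$, though that is standard) and proves something slightly stronger that matters for the singularity-theory side of the story: one generic test map $\Psi$ already determines the $\jetreg 11$-orbit of its test curves, which is the statement needed to interpret $\Theta_k$ and its Thom polynomial. Your route buys conceptual clarity on the Hilbert-scheme side: it exhibits $\phi$ as $\varphi$ precomposed with $\gamma\mapsto\xi_\gamma$, which is exactly the content of part (ii) that the paper dismisses as ``immediate from the definitions.'' Your worry about the ``delicate point'' is unfounded: since $\sigma$ sends $z$ to a generator $\alpha_1z+\dots+\alpha_kz^k$ ($\alpha_1\neq0$) of the maximal ideal, it is literally $\psi^*$ for that $k$-jet $\psi$, and evaluating $\gamma_1^*=(\gamma_2\circ\psi)^*$ on the coordinate functions $x_1,\dots,x_n$ recovers $\gamma_1=\gamma_2\circ\psi$ as $k$-jets on the nose. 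One cosmetic caveat: as in the paper, ``$\mathrm{im}(\varphi)$'' in part (ii) must be read as the image of the curvilinear locus under $\varphi$ (since $\varphi$ is defined on all of $X^{[k+1]}_p$), which is how you have interpreted it.
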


\begin{proof}
For the first part it is enough to prove that for $\Psi \in \Theta_k$ with $\dim \ker \Psi^1=1$ and $\gamma,\delta \in \jetreg 1n$
\[\Psi \circ \gamma_1=\Psi \circ \gamma_2=0 \Leftrightarrow \exists
\Delta \in \jetreg 11 \text{ such that } \gamma=\delta
\circ \Delta.\]
We prove this statement by induction. Let $\gamma=v_1t+\dots +v_kt^k$ and
$\delta=w_1t+\dots+ w_kt^k$. Since $\dim \ker \Psi^1=1$, $v_1=\lambda w_1$, for some
$\lambda\neq0$. This proves the $k=1$ case. 

Suppose the statement is true for $k-1$. Then, using the appropriate
order-($k-1$) diffeomorphism, we can assume that $v_m=w_m$, $m=1\ldots
k-1.$ It is clear then from the explicit form \eqref{modeleq}
(cf. \eqref{eqn4}) of the equation
$\Psi\circ\gamma=0$, that  $\Psi^1(v_k)=\Psi^1(w_k)$, and hence
$w_k=v_k-\lambda v_1$ for some $\lambda\in\CC$. Then
$\gamma=\Delta\circ\delta$ for $\Delta=t+\lambda t^k$, and the proof
is complete.

 The second part immediately follows from the definition of $\varphi$ and $\phi$. 
\end{proof}

\begin{remark} \begin{enumerate}
\item In particular the second part of Theorem \ref{embedgrass} tells us that the curvilinear component
\[\overline{CX^{[k+1]}_p}=\overline{\mathrm{im}(\phi)} \subset \grass_k(\symdot)\]
has a $\GL(n)$-equivariant embedding into the Grassmannian $\grass_k(\symdot)$ as the closure of the image of $\phi$. 
\item  For a point $\gamma\in J_k^\reg(1,n)$ let $v_i=\frac{\g^{(i)}}{i!}\in \CC^n$ denote the normed $i$th derivative. Then from Lemma \ref{explgp} immediately follows that for $1\le i \le k$ (see \cite{bsz}):
\begin{equation}\label{sgamma}
\mathcal{S}^{i,1}_\g=\mathrm{Span}_\CC (v_1,v_2+v_1^2,\ldots, \sum_{\tau \in \mathcal{P}(i)}\bv_{\tau})\subset \symdot.
\end{equation} 
This explicit parametrisation of the curvilinear component is crucial in building our localisation process in the next section.
\item Since $\phi$ is $\GL(n)$-equivariant, for $k\le n$ the $\GL(n)$-orbit of 
\[p_{n,k}=\phi(e_1,\ldots, e_k)=\mathrm{Span}_\CC (e_1,e_2+e_1^2,\ldots, \sum_{\tau \in \mathcal{P}(k)}e_{\tau}),\] 
forms a dense subset of the image $\jetreg 1n$ and therefore 
\[\overline{CX^{[k+1]}_p}=\overline{\phi(\jetreg 1n)}=\overline{\mathrm{GL_n} \cdot p_{n,k}}.\] 
\end{enumerate}
\end{remark}

\subsection{Jet bundles and $CX^{[k]}$.} Let $X$ be a smooth projective variety and let $J_kX \to X$ denote the bundle of $k$-jets of germs of parametrized curves in $X$; its
fibre over $x\in X$ is the set of equivalence classes of germs of holomorphic
maps $f:(\CC,0) \to (X,x)$, with the equivalence relation $f\sim g$
if and only if the derivatives $f^{(j)}(0)=g^{(j)}(0)$ are equal for
$0\le j \le k$. If we choose local holomorphic coordinates
$(z_1,\ldots, z_n)$ on an open neighbourhood $\Omega \subset X$
around $x$, the elements of the fibre $J_kX_x$ are represented by the Taylor expansions 
\[f(t)=x+tf'(0)+\frac{t^2}{2!}f''(0)+\ldots +\frac{t^k}{k!}f^{(k)}(0)+O(t^{k+1}) \]
 up to order $k$ at $t=0$ of $\CC^n$-valued maps $f=(f_1,f_2,\ldots, f_n)$
on open neighbourhoods of 0 in $\CC$. Locally in these coordinates the fibre  can be written as
\[J_kX_x=\left\{(f'(0),\ldots, f^{(k)}(0)/k!)\right\}=(\CC^n)^k,\]
which we identify with $J_k(1,n)$.  Note that $J_kX$ is not
a vector bundle over $X$ since the transition functions are polynomial but
not linear, see \cite{dem} for details. 

Let $J_k^{\reg}X$ denote the bundle of $k$-jets of germs of parametrized regular curves in $X$, that is, where the first derivative $f'\neq 0$ is nonzero. Its fibre is isomorphic with $\jetreg 1n$.

$\jetreg 11$ acts fibrewise on the jet bundle $J_k^{\reg}X$ and the full curvilinear component $CX^{[k]}$ on $X$ can be identified with the non-reductive fibrewise quotient of $J_k^{\reg}X$ by $\jetreg 11$:
\[CX^{[k+1]}=J_k^{\reg}X/\jetreg 11.\]
More precisely, introduce the notation
\[\symdotx=T_X^*\oplus \sym^2(T_X^*)\oplus \ldots \oplus \sym^k(T_X^*)\]
for the vector bundle on $X$ whose fibre is isomorphic to $\symdot$.
The Grassmannian bundle $\grass_k(\symdotx)$ and the jet bundle $J_k^{\reg}X$ have an induced fibrewise action of $\GL(n)$ and we have the following fibrewise version of Theorem \ref{embedflag} 
\begin{corollary}
The quotient $J_k^{\reg}X/\jetreg 11$ has the structure of a locally trivial
bundle over $X$, and Theorem \ref{embedflag} gives us a $\GL(n)$-equivariant holomorphic embedding
\[\phi^\grass :CX^{[k+1]}=J_k^{\reg}X/\jetreg 11 \hookrightarrow \grass_k(\symdotx)\] into the Grassmannian bundle of $\symdotx$ over $X$. The fibrewise compactification  
\[\overline{CX}^{[k+1]}=\overline{\phi^\grass(J_k^{\reg}X)}\]
 of the image is the curvilinear component of the Hilbert scheme of $k+1$ points on $X$.
\end{corollary}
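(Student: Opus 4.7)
The plan is to establish a fibrewise version of Theorem \ref{embedgrass}, working in local trivialisations and checking that the constructions descend compatibly to bundles on $X$. I would proceed in three stages.

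First, for local triviality of the quotient, pick $p \in X$ and local holomorphic coordinates $(z_1,\ldots,z_n)$ on a neighbourhood $U \ni p$; this gives a trivialisation $J_k^{\reg}X|_U \simeq U \times \jetreg 1n$. The group $\jetreg 11$ acts intrinsically by reparametrising the source $\CC$, so on this trivialisation it acts only on the second factor via the standard action \eqref{jetdiffmatrix}. Hence the quotient is locally $U \times (\jetreg 1n/\jetreg 11) = U \times CX^{[k+1]}_p$. The transition functions of $J_k^{\reg}X$ come from $k$-jets of coordinate changes on $X$ acting on the target of each germ, hence they commute with the source-reparametrisation action of $\jetreg 11$ and descend to the quotient, giving $J_k^{\reg}X/\jetreg 11$ the structure of a locally trivial bundle over $X$ with fibre $CX^{[k+1]}_p$.

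Next, for the fibrewise embedding into the Grassmannian bundle, I would apply Theorem \ref{embedgrass} fibrewise. In each local trivialisation this yields $\phi: \jetreg 1n/\jetreg 11 \hookrightarrow \grass_k(\symdot)$. To patch these into a global embedding $\phi^\grass: CX^{[k+1]} \hookrightarrow \grass_k(\symdotx)$, I would invoke part (2) of Theorem \ref{embedgrass}: the image of $\phi$ coincides with the image of the natural embedding $\varphi: X^{[k+1]}_p \hookrightarrow \grass_k((\mathfrak{m}_p/\mathfrak{m}_p^{k+1})^*)$ sending an ideal $I_\xi \subset \mathfrak{m}_p$ to the annihilator of $I_\xi/\mathfrak{m}_p^{k+1}$. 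Since this latter description is coordinate-free in terms of $\calo_X$, it patches globally; identifying the associated graded of $\mathfrak{m}/\mathfrak{m}^{k+1}$ with $\symdotx$ then yields the required embedding, and the pointwise $\GL(n)$-equivariance of Theorem \ref{embedgrass}(1) ensures compatibility with the $\GL(n)$-transitions that define $\symdotx$.

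Finally, for the identification of the compactification, I would argue fibrewise. Taking the closure of $\phi^\grass(J_k^{\reg}X)$ in $\grass_k(\symdotx)$ gives a subvariety whose fibre over $p$ is, by the first part of Theorem \ref{embedgrass} together with the preceding lemma, precisely the irreducible component $\overline{CX^{[k+1]}_p}$ of the punctual Hilbert scheme of dimension $(n-1)k$. These fibres glue to an irreducible subvariety of $X^{[k+1]}$ of total dimension $n+(n-1)k$, which is the curvilinear component $\overline{CX}^{[k+1]}$.

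The main obstacle is the compatibility of the fibrewise embedding with transitions in step two. Transitions of $J_k^{\reg}X$ involve the full $k$-jet of coordinate changes on $X$, whereas the target $\grass_k(\symdotx)$ admits only $\GL(n)$-transitions coming from $T_X^*$ and its symmetric powers. Navigating this requires passing through the coordinate-free description via ideals on the Hilbert scheme side and then identifying $\mathfrak{m}/\mathfrak{m}^{k+1}$ with its associated graded $\symdotx$; making this identification carefully is the main technical content of the proof.
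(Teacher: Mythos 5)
Your overall strategy coincides with the paper's: the paper offers no separate proof of this corollary, treating it as the immediate fibrewise globalisation of Theorem \ref{embedgrass}, which is exactly what you do. Your step one (the source-reparametrisation action of $\jetreg 11$ commutes with the target transition functions of $J_k^{\reg}X$, so the quotient inherits local trivialisations) and step three (fibrewise identification of the closure using the lemma on the curvilinear component) are correct and complete.

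The one place you stop short is precisely the point you flag yourself: the identification of the coordinate-free target with $\grass_k(\symdotx)$. This is a genuine issue, and it is worth saying exactly what it is. The coordinate-free object is the Grassmannian bundle of $(\mathfrak{m}/\mathfrak{m}^{k+1})^*$, where $\mathfrak{m}/\mathfrak{m}^{k+1}$ is the bundle of $k$-jets of functions vanishing along the diagonal (the principal parts bundle). Its structure group is the full jet group $\jetreg nn$, not $\GL(n)$, and it carries only a canonical filtration $\mathfrak{m}\supset\mathfrak{m}^2\supset\cdots$ whose associated graded is $\symdotx=\bigoplus_{i=1}^k\sym^i T_X^*$; the filtration need not split holomorphically (the first obstruction is essentially the Atiyah class of $X$), so a holomorphic isomorphism $(\mathfrak{m}/\mathfrak{m}^{k+1})^*\simeq\symdotx^*$ does not exist in general and the $\GL(n)$-equivariance of $\phi$ alone cannot produce one. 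Thus, as stated, the holomorphic embedding lands in the Grassmannian bundle of the filtered jet bundle rather than literally in $\grass_k(\symdotx)$; one either reads $\symdotx$ as that filtered bundle, or chooses a $C^\infty$ splitting. The paper elides this point just as you do, and it is harmless for everything that follows, because the filtered bundle and its associated graded have the same Chern and Segre classes, which is all that enters the localisation and the residue formula. You should either make this reinterpretation explicit or note that a smooth splitting suffices for the integrals being computed; as written, your "main technical content" is announced but never carried out.
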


\subsection{Tautological bundles over $\overline{CX}^{[k]}$} Let $F$ be a rank $r$ vector bundle over $X$. The fibre of the corresponding rank $r(k+1)$ tautological bundle $F^{[k+1]}$ on $\overline{CX}^{[k+1]}$ at the point $\xi$ is 
\[F^{[k+1]}_{\xi}=H^0(\xi, F|_{\xi})=H^0(\calo_{\xi} \otimes F).\]
Using our embedding $\phi^\grass: \overline{CX}^{[k+1]} \hookrightarrow \grass_{k}(\symdotx)$ this fibre can be identified as
\[F^{[k+1]}_{\xi}=(\calo_{\grass_{k}(\symdotx)} \oplus \cale)_{\phi(\xi)} \otimes F_{\mathrm{supp}(\xi)}\]
where $\cale$ is the tautological rank $k$ bundle over $\grass_k(\symdotx)$. Hence the total Chern class of $F^{[k+1]}$ can be written as 
\[c(F^{[k+1]})=\prod_{j=1}^r(1+\theta_j)\prod_{i=1}^k\prod_{j=1}^r(1+\eta_i+\theta_j)\]
where $c(F)=\prod_{j=1}^r(1+\theta_j)$ and $c(\cale)=\prod_{i=1}^k(1+\eta_i)$ are the Chern classes for the corresponding bundles. In particular the Chern class
\begin{equation}\label{chernclasses}
c_i(F^{[k+1]})=\mathcal{C}_i(c_1(\cale),\ldots c_k(\cale),c_1(F),\ldots, c_r(F))
\end{equation}
can be expressed as a polynomial function $\mathcal{C}_i$ in Chern classes of $\cale$ and $F$.  
\section{Partial resolutions of $\overline{CX}^{[k+1]}$}\label{sec:resolutions}
In this section first we construct a partial resolution of the (highly singular) punctual curvilinear component $\overline{CX}^{[k+1]}_p \subset \grass_k(\symdot)$ in two steps. The first partial resolution is defined for any choice of parameters $n,k$ an it uses nested Hilbert schemes. For the second step we need to impose the very restrictive condition $k\le n$, that is the number of points can't exceed the dimension of the variety plus $1$. We will see how to dispose this condition in Section \S\ref{sec:extendtherange}.

\subsection{Completion in nested Hilbert schemes} Let 
\[X^{[k_1,\ldots, k_t]}=\{(\xi_1 \subset \xi_2 \subset \ldots \subset \xi_t): \xi_i \in X^{[k_i]}\} \subset X^{[k_1]} \times \ldots \times X^{[k_t]}\]
be the nested Hilbert scheme defining flags of subschemes of length vector $(k_1,\ldots, k_t)$.

Curvilinear subschemes contain only one subscheme for any given smaller length. Therefore $\xi \in CX^{[k+1]}_p$ defines a unique flag 
\[\calf(\xi)=(\xi_1 \subset \xi_2 \subset \ldots \subset \xi_k) \in CX^{[2]}_p \times \ldots \times CX^{[k+1]}_p\subset  X^{[2,\ldots k+1]}\]
where $\xi_i$ is defined via
\[\calo_{\xi_i}=\calo_\xi /\calo_{X,p}^{i+1} \simeq \CC[z]/z^{i+1}\]
and therefore $\xi_i \in CX^{[i+1]}_p$. This defines an embedding 
\[\tilde{\phi}: CX^{[k+1]}_p \hookrightarrow X^{[2,\ldots, k+1]}\]
\[\xi \mapsto (\xi_1 \subset \ldots \subset \xi_k).\]
Let $f_\xi \in \jetreg 1n$ denote the $k$-jet corresponding to $\xi \in CX^{[k+1]}_p$ and let $\mathcal{S}^i_\xi=\mathcal{S}^{i,1}_{f_\xi} \subset J_k(n,1)$ be the solution space defined in \eqref{solutionspace} where $N=1$. Then $\tilde{\phi}$ can be equivalently written as 
\[ f_\xi \mapsto ((\mathcal{S}^1_\xi)^*\subset (\mathcal{S}^2_\xi)^* \subset \ldots \subset (\mathcal{S}^k_\xi)^*) \in \flag_k(\symdot)\]
or using coordinates as 
\[f=f_\xi \mapsto \mathrm{Span}_\CC(f') \subset \mathrm{Span}_{\CC}(f',f''+(f')^2) \subset \ldots \subset \mathrm{Span}_{\CC}(f',f''+(f')^2,\ldots ,f^{[k]}+\sum_{\Sigma a_i=k}(f^{[i]})^{a_i}).\]
Theorem \ref{embedgrass} has the following immediate 
 \begin{corollary}\label{embedflag} The map 
\[\tilde{\phi}: \jetreg 1n \rightarrow \flag_k(\symdot)
\]
\[\gamma  \mapsto \Fl_\g=((\mathcal{S}^1_{\gamma})^*\subset \ldots \subset (\mathcal{S}^k_{\gamma})^*))\]
is $\jetreg 11$-invariant and induces an injective map on the $\jetreg 11$-orbits into the flag manifold 
\[\phi^\flag: CX^{[k+1]}_p=\jetreg 1n /\jetreg 11 \hookrightarrow \flag_k(\symdot).\]
Moreover, all these maps are $\GL(n)$-equivariant with respect to the standard action of $\GL(n)$ on $\jetreg 1n \subset \Hom(\CC^k,\CC^n)$ and the induced action on $\flag_k(\symdot)$.
\end{corollary}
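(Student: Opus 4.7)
The plan is to deduce each piece of the corollary from Theorem \ref{embedgrass} applied level-by-level, combined with the explicit formula \eqref{sgamma} for $\mathcal{S}^i_\gamma$. First I would observe that the first $i$ equations in \eqref{modeleq} involve only $v_1,\ldots,v_i$, so $\mathcal{S}^i_\gamma$ depends only on the $i$-jet of $\gamma$; applying Theorem \ref{embedgrass} with $k$ replaced by $i$ produces a $\GL(n)$-equivariant map $J_i^\reg(1,n)\to\grass_i(\mathrm{Sym}^{\le i}\CC^n)$ sending $\gamma\mapsto(\mathcal{S}^i_\gamma)^*$, which (after composing with the natural inclusion $\mathrm{Sym}^{\le i}\CC^n\hookrightarrow\symdot$) realises $(\mathcal{S}^i_\gamma)^*$ as a point of $\grass_i(\symdot)$ preserved by $J_i^\reg(1,1)$.

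Next I would verify that these subspaces fit into an honest flag inside $\symdot$. Using the explicit parametrisation \eqref{sgamma}, the spanning set for $(\mathcal{S}^i_\gamma)^*$ is precisely the initial $i$-element segment of the spanning set for $(\mathcal{S}^{i+1}_\gamma)^*$, so the inclusion $(\mathcal{S}^i_\gamma)^*\subset(\mathcal{S}^{i+1}_\gamma)^*$ is literal and dimensions jump by exactly one at each step. Hence $\Fl_\gamma$ is a well-defined complete flag of length $k$ in $\symdot$, and $\tilde\phi$ is a morphism to $\flag_k(\symdot)$.

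For $\jetreg 11$-invariance, note that any reparametrisation $\vp\in\jetreg 11$ acts on the $i$-jet of $\gamma$ through its truncation to $J_i^\reg(1,1)$, so the level-$i$ statement of Theorem \ref{embedgrass} gives $(\mathcal{S}^i_{\gamma\circ\vp})^*=(\mathcal{S}^i_\gamma)^*$ for every $i$. Consequently the whole flag is preserved and $\tilde\phi$ descends to the quotient to produce $\phi^\flag$. Injectivity on $\jetreg 11$-orbits is obtained by reading off the top step: if $\Fl_\gamma=\Fl_\delta$ then in particular $(\mathcal{S}^k_\gamma)^*=(\mathcal{S}^k_\delta)^*$, and the Grassmannian version in Theorem \ref{embedgrass} already forces $\gamma$ and $\delta$ into the same orbit. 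Finally, $\GL(n)$-equivariance is inherited from the fact that each equation $\Psi\circ\gamma=0$ in \eqref{modeleq} is $\GL(n)$-equivariant under the simultaneous action twisting $\Psi$ and $\gamma$ by $g\in\GL(n)$, so every level of the construction respects this action.

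Because the corollary is billed as an \emph{immediate} consequence of Theorem \ref{embedgrass}, I do not anticipate a genuine obstacle. The single point that deserves a moment of care is pinning down the identification of $J_k(n,1)^*$ with $\symdot$ cleanly enough that the annihilator description used in defining $\mathcal{S}^i_\gamma$ in \eqref{solutionspace} agrees on the nose with the spanning-set description in \eqref{sgamma}, so that the chain $(\mathcal{S}^1_\gamma)^*\subset\cdots\subset(\mathcal{S}^k_\gamma)^*$ appears as literal nested subspaces of $\symdot$ rather than as merely abstractly compatible ones; once this identification is fixed, the rest is bookkeeping on top of Theorem \ref{embedgrass}.
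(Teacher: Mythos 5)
Your proposal is correct and follows the route the paper intends: the corollary is stated as an immediate consequence of Theorem \ref{embedgrass}, and your level-by-level elaboration (invariance and equivariance at each level $i$, nesting via \eqref{sgamma}, injectivity read off from the top step $(\mathcal{S}^k_\gamma)^*$) is exactly the argument being left to the reader. The only point worth recording explicitly is the one you already flag, namely that the paper's assertion that $\mathcal{S}^{i,1}_\gamma$ has codimension $i$ is what guarantees the dimensions of the flag steps.
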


Let $\widehat{CX}^{[k+1]}_p$ denote the closure of $\tilde{\phi}(\jetreg 1n)$ in  $\mathrm{Flag}(k,\symdot)$.\\
\subsection{Blowing up along the linear part.}\label{subsec:blowingup} 
Assume $k\le n$. Let $\pi: J_k(n,1)^* \simeq \symdot=\oplus_{i=1}^k\mathrm{Sym}^i\CC^n \to \CC^n$ denote the projection to the first (linear) factor and define  
\[\widetilde{CX}^{[k+1]}_p=\{((\mathcal{S}^1_{\gamma})^*\subset \ldots \subset (\mathcal{S}^k_{\gamma})^*)),(V_1\subset \ldots \subset V_k): \pi(\mathcal{S}^i_{\gamma})^*) \subset V_i\} \subset \widehat{CX}^{[k+1]}_p \times \flag_k(\CC^n).\]
 Equivalently, let $P_{k,n} \subset \mathrm{GL}_n$ denote the parabolic subgroup which preserves the flag 
\[\mathbf{f}=(\mathrm{Span}(e_1)   \subset \mathrm{Span}(e_1,e_2) \subset \ldots \subset \mathrm{Span}(e_1,\ldots, e_k) \subset \CC^n).\] 
 and $\mathfrak{p}_{k,n}=\tilde{\phi}(e_1,\ldots ,e_k)$ the base point in $\flag_k(\symdot)$. Define the partial resolution  $\widetilde{CX}^{[k+1]}_p$ of $\widehat{CX}^{[k+1]}_p$ as the fibrewise compactification of $\widehat{CX}^{[k+1]}_p$ on $\flag_k(\CC^n)=\GL(n)/P_{k,n}$, that is, 
\[\widetilde{CX}^{[k+1]}_p=\GL(n) \times_{P_{k,n}} \overline{P_{k,n}\cdot \mathfrak{p}_{k,n}} \to \overline{\GL(n)\cdot \mathfrak{p}_{k,n}}=\widehat{CX}^{[k+1]}_p\]
with the resolution map $\widetilde{CX}^{[k+1]}_p \to \widehat{CX}^{[k+1]}_p$ given by $\rho(g,z)=g\cdot z$. 



The geometric resolutions $\widetilde{CX}^{[k+1]}_p$ and $\widehat{CX}^{[k+1]}_p$ of $CX^{[k+1]}_p$ constructed in this  section form the fibres of partial resolution bundles $\widehat{CX}^{[k]}$ and $\widetilde{CX}^{[k]}$ over $X$ with partial resolution maps 
\[\widetilde{CX}^{[k]} \to \widehat{CX}^{[k]} \to \overline{CX}^{[k]}\]
where  
\[\widehat{CX}^{[k]}=\overline{\tilde{\phi}(J_k(\cotx))} \subset \flag_k(\symdotx)\]
is the closure of the fibrewise embedding $CX^{[k]}=J_k^{\reg}X/\jetreg 11 \hookrightarrow \flag_k(\symdotx)$ and for $k\le n$ 
\[\widetilde{CX}^{[k+1]}=\{(\cals_1 \subset \ldots \subset \cals_k)),(V_1\subset \ldots \subset V_k): \pi(\cals_i) \subset V_i\} \subset \widehat{CX}^{[k+1]} \times \flag_k(\cotx).\]
Here $\pi: J_k^{\reg}X \simeq \oplus_{i=1}^k\mathrm{Sym}^i \cotx \to \cotx$ denotes again the projection to the first factor. The fibre of $\widetilde{CX}^{[k+1]}$ over $p\in X$ is $\widetilde{CX}^{[k+1]}_p$ and therefore $\widetilde{CX}^{[k+1]}$ canonically sits in $\flag_k(\symdotx)$.


\section{Equivariant localisation on $\widetilde{CX}^{[k+1]}$}\label{sec:locsnowman}

Let $F$ be a rank $r$ vector bundle over $X$ and let $F^{[k+1]}$ denote the corresponding rank $(k+1)r$ tautological bundle over $X^{[k+1]}$. We use the same notation $F^{[k+1]}$ for its pull-back along the partial resolution map $\widetilde{CX}^{[k+1]} \to \overline{CX}^{[k+1]}$. Fix a Chern polynomial $P(c_1,\ldots, c_{r(k+1)})$ of weighted degree $\dim \overline{CX}^{[k+1]}=n+(n-1)k$ where $c_i=c_i(F^{[k+1]})$ are the Chern classes of the tautological bundle. In this section we start developing an iterated residue formula for the tautological integral   
$\int_{\widetilde{CX}^{[k]}}P$. This formula is attained via a two-step equivariant localisation process and it is crucially based on a vanishing theorem of residues. 

\subsection{Equivariant de-Rham model and the Atiyah-Bott formula}\label{sec:equiv}

This section is a short introduction to equivariant cohomology and localisation. For
more details, we refer the reader to Section 2 of \cite{bsz} and \cite{getzlervergne}. 

Let $G$ be a compact Lie group with Lie algebra $\mathfrak{g}$ and let $M$ be a $C^\infty$ manifold endowed with the action of $G$. The $G$-equivariant differential forms are defined as differential form valued polynomial functions on $\mathfrak{g}$:
\[\Omega_G^\bullet(M)=\left\{\alpha:\mathfrak{g}\to
\Omega^\bullet(M):\alpha(gX)=g\alpha(X) \text{ for }g\in G , X \in \mathfrak{g}\right\} =(S^\bullet \mathfrak{g}^*\otimes \Omega^\bullet(M))^{G}\]
where $(g\cdot \alpha)(X)=g\cdot (\alpha(g^{-1}\cdot X))$. 
 One can define equivariant the exterior differential $d_G$ on $(S^\bullet \mathfrak{g}^* \otimes \Omega^\bullet(M))^G$ by the formula 
\[(d_G\alpha)(X)=(d-\iota(X_M))\alpha(X),\]
where $\iota(X_M)$ denotes the contraction by the vector field $X_M$. 
This increases the degree of an equivariant form by one if the $\mathbb{Z}$-grading is given on $(S^\bullet \mathfrak{g}^* \otimes \Omega^\bullet(M))^G$ by $\deg(P \otimes \alpha)=2\deg(P)+\deg(\alpha)$ 
for $P \in S^\bullet \mathfrak{g}^*, \alpha \in \Omega^\bullet(M)$. 
The homotopy formula $\iota(X)d+d\iota(X)=\mathcal{L}(X)$ implies that $d_G^2(\alpha)(X)=-\mathcal{L}(X)\alpha(X)=0$
for any $\alpha \in (S^\bullet \mathfrak{g}^* \otimes \Omega^\bullet(M))^G$, and therefore $(d_G,\Omega^\bullet_G(M))$ is a complex. The equivariant cohomology $H_G^*(M)$ of the $G$-manifold $M$ is the cohomology of the complex $(d_G,\Omega^\bullet_G(M))$.
Note that $\alpha \in \Omega^\bullet_G(M)$ is equivariantly closed if and only if 
\[\alpha(X)=\alpha(X)^{[0]}+\ldots +\alpha(X)^{[n]} \text{ such that }
\iota(X_M)\alpha(X)^{[i]}=d\alpha(X)^{[i-2]}.\]
Here $\alpha(X)^{[i]} \in \Omega^i(M)$ is the degree-$i$ part of $\alpha(X) \in \Omega^\bullet(M)$ and $\alpha^{[i]}: \mathfrak{g} \to \Omega^i(M)$ is a polynomial function. 

The equivariant push-forward map $\int_M : \Omega_G(M) \to (S^\bullet \mathfrak{g}^*)^G$ is defined 
by the formula 
\begin{equation}\label{localisationmap}
\left(\int_M \alpha\right)(X)=\int_M \alpha(X)=\int_M \alpha^{[n]}(X)
\end{equation}

When the $n$-dimensional complex torus $T=(\CC^*)^n$ acts on $M$ let $K=U(1)^n$ be its maximal unipotent subgroup and $\liet=\mathrm{Lie}(K)$ its Lie algebra. We define the $T$-equivariant cohomology $H_T^\bullet(M)$ to be the $H_K^\bullet(M)$, the equivariant DeRham cohomology defined by the action of $K$. If $M_0(X)$ is the zero locus of the vector field $X_M$, then the form $\alpha(X)^{[n]}$ is exact outside $M_0(X)$. (see Proposition 7.10 in \cite{getzlervergne}), and this suggests that the integral $\int_M \alpha(X)$ depends only on the restriction $\alpha(X)|_{M_0(X)}$.

\begin{theorem}[Atiyah-Bott \cite{ab}, Berline-Vergne \cite{BV}]\label{abbv} Suppose that $M$ is a compact manifold and $T$ is a complex torus acting smoothly on $M$, and the fixed point set $M^T$ of the $T$-action on M is finite. Then for any cohomology class $\a \in H_T^\bullet(M)$
\[\int_M \alpha=\sum_{f\in M^T}\frac{\a^{[0]}(f)}{\mathrm{Euler}^T(T_fM)}.\]
Here $\mathrm{Euler}^T(T_fM)$ is the $T$-equivariant Euler class of the tangent space $T_fM$, and $\alpha^{[0]}$ is the differential-form-degree-0 part of $\alpha$. 
\end{theorem}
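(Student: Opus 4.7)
The plan is the classical construction of an equivariant primitive of $\alpha$ on the complement of the fixed locus, followed by Stokes' theorem and a local model computation at each fixed point. First I would replace $T=(\CC^*)^n$ by its maximal compact torus $K=U(1)^n$, as permitted by the definition $H_T^\bullet(M)=H_K^\bullet(M)$ recalled in the excerpt, and fix a generic $X\in\liet$ whose orbits are dense in $K$, so that the zero locus of the fundamental vector field $X_M$ coincides exactly with $M^T=\{p_1,\ldots,p_N\}$.

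Next, endow $M$ with a $K$-invariant Riemannian metric $g$ and set $\theta=g(X_M,\cdot)/g(X_M,X_M)\in\Omega^1(M\setminus M^T)$, which satisfies $\iota(X_M)\theta=1$ and hence $d_K\theta=d\theta-1$ under the convention $d_K=d-\iota(X_M)$. Because wedge-powers of $d\theta$ vanish above degree $\dim M$, the element $d_K\theta$ is invertible in the equivariant de~Rham algebra of $M\setminus M^T$; a standard formal manipulation then produces a primitive $\gamma\in\Omega_K^\bullet(M\setminus M^T)$ with $d_K\gamma(X)=\alpha(X)$. This is essentially Proposition~7.10 of \cite{getzlervergne}: the equivariant cohomology of $M$ with polynomial coefficients is concentrated at the fixed set.

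Now excise a $K$-invariant geodesic ball $B_\epsilon(p)$ of small radius $\epsilon$ around each fixed point. Applying ordinary Stokes' theorem to the top-degree piece $\alpha(X)^{[n]}=d\gamma(X)^{[n-1]}$ on $M\setminus\bigcup_p B_\epsilon(p)$, and observing that $\int_{B_\epsilon(p)}\alpha(X)\to 0$ as $\epsilon\to 0$ since the integrand is bounded and the volume shrinks, one obtains
\begin{equation*}
\int_M\alpha(X)=-\lim_{\epsilon\to 0}\sum_{p\in M^T}\int_{\partial B_\epsilon(p)}\gamma(X).
\end{equation*}

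The main obstacle, and the place where the Euler-class denominator appears, is the local boundary computation at each fixed $p$. Using an equivariant tubular neighbourhood one reduces to the linear model in which $M$ is replaced by the tangent representation $T_pM=\bigoplus_i L_{\mu_i}$, decomposed into one-dimensional $T$-weight spaces with weights $\mu_i\in\liet^*$. A direct calculation on each factor $L_{\mu_i}\simeq\CC$ in polar coordinates (or, equivalently, by residue calculus) contributes a factor of $1/\mu_i(X)$, so the boundary integral tends to $\alpha^{[0]}(p)(X)/\prod_i\mu_i(X)=\alpha^{[0]}(p)(X)/\mathrm{Euler}^T(T_pM)(X)$. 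Finally, both sides of the claimed identity are polynomials in $X\in\liet$ agreeing on the Zariski-dense set of generic elements, hence they agree identically, proving the Atiyah--Bott--Berline--Vergne formula.
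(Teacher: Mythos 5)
The paper does not actually prove Theorem~\ref{abbv}: it is quoted as a classical result with references to Atiyah--Bott \cite{ab} and Berline--Vergne \cite{BV} (and the surrounding discussion only points to Proposition~7.10 of \cite{getzlervergne}), so there is no internal argument to compare yours against. Your sketch is the standard Berline--Vergne/Atiyah--Bott proof --- invert $d_K\theta$ for $\theta=g(X_M,\cdot)/g(X_M,X_M)$ to get an equivariant primitive of $\alpha$ off $M^T$, excise balls, apply Stokes, and evaluate the boundary limit in the linear model at each fixed point --- and it is correct in outline. Two points deserve tightening if you want a complete proof. First, the right-hand side is a priori only a \emph{rational} function of $X$ (each term has the Euler class in the denominator); your argument shows the identity for generic $X$, and then one concludes equality of rational functions by density, with polynomiality of the sum being a consequence of the equality with the polynomial left-hand side --- exactly the cancellation of denominators the paper remarks on after the theorem statement. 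Second, in the local computation you should justify the decomposition $T_pM=\bigoplus_i L_{\mu_i}$: for a smooth action on a real manifold the isotropy representation splits into two-dimensional real weight spaces (no trivial summand because $p$ is isolated), and one must choose complex structures/orientations on these to fix the signs of the $\mu_i$, compatibly with the orientation of $\partial B_\epsilon(p)$ inherited from $M$; together with the standard estimate showing that the higher differential-form-degree parts of $\alpha$ and of the primitive contribute nothing in the limit $\epsilon\to0$, this pins down the contribution $\alpha^{[0]}(p)/\mathrm{Euler}^T(T_pM)$ with the correct sign.
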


The right hand side in the localisation formula considered in the fraction field of the polynomial ring of $H_T^\bullet (\mathrm{point})=H^\bullet(BT)=S^\bullet \mathfrak{t}^*$ (see more on details in \cite{ab,bgv}). Part of the statement is that the denominators cancel when the sum is simplified.

\subsection{Equivariant Poincar\'e duals and multidegrees}
\label{subsec:epdmult} 

The Atiyah-Bott formula works for holomorphic actions of tori on nonsingular projective varieties. In our case, however, the punctual curvilinear component $\overline{CX}^{[k+1]}_p$ is highly singular at the fixed points so the AB localisation does not apply directly as the equivariant Euler class of the tangent space at a singular fixed point is not well defined. But $\overline{CX}^{[k+1]}_p$ sits in the nonsingular ambient space $\grass_k(\symdot)$ and an intuitive idea would be to put $\mathrm{Euler}^T(T_f\grass_k(\symdot))$ into the denominator on the right hand side which we then compensate in the numerator with some sort of dual of the tangent cone of $\overline{CX}^{[k+1]}_p$ at $f$ sitting in the tangent space of $\grass_k(\symdot)$ at $f$. This idea indeed works and it becomes incarnate in the Rossman formula in \S\ref{subsec:rossman}. 

Let $T=(\CC^*)^n$ be a complex torus with $K=U(1)^n$ its maximal compact subgroup and $\liet=\mathrm{Lie}(K)$ its Lie algebra. Let $M$ be a manifold endowed with a $T$ action. The compactly supported equivariant cohomology groups $ H^\bullet_{K,\mathrm{cpt}}(M)
$ are obtained by restricting the equivariant de Rham complex to compactly supported (or quickly
decreasing at infinity) differential forms.  Clearly $H^\bullet_{K,\mathrm{cpt}}(M) $ is a module over
$H^\bullet_K(M)$. When $M=W$ is an $N$-dimensional
complex vector space, and the action is linear, one has
$H^\bullet_K(W)= S^\bullet\mathfrak{t}^*$ and $ H^\bullet_{K,\mathrm{cpt}}(W) $ is
a free module over $H^\bullet_K(W)$ generated by a single element of
degree $2N$:
\begin{equation}
  \label{thomg}
   H^\bullet_{K,\mathrm{cpt}}(W) = H^\bullet_{K}(W)\cdot\mathrm{Thom}_{K}(W),
\end{equation}
called the Thom class of $W$. 

A $T$-invariant algebraic subvariety $\Sigma$ of dimension $d$ in $W$
represents a $T$-equivariant $2d$-cycle in the sense that
\begin{itemize}
\item a compactly-supported equivariant form $\mu$ of degree $2d$ is
  absolutely integrable over the components of maximal dimension of
  $\Sigma$, and $\int_\Sigma\mu\in S^\bullet \mathfrak{t}$;
\item if $d_K\mu=0$, then $\int_\Sigma\mu$ depends only on the class
  of $\mu$ in $ H^\bullet_{K,\mathrm{cpt}}(W) $,
\item and $\int_\Sigma\mu=0$  if $\mu=d_K\nu$ for a
  compactly-supported equivariant form $\nu$.
\end{itemize}

\begin{definition} \label{defepd} Let $\Sigma$ be an $T$-invariant algebraic
  subvariety of dimension $d$ in the vector space $W$. Then the
  equivariant Poincar\'e dual of $\Sigma$ is the polynomial on $\mathfrak{t}$
  defined by the integral
\begin{equation}
 \label{vergneepd}
 \epd{\Sigma,W} = \frac1{(2\pi)^d}\int_\Sigma\mathrm{Thom}_{K}(W).
\end{equation}  
\end{definition}
 An immediate consequence of the definition is that for an equivariantly
closed differential form $\mu$ with compact support, we have
\[  \int_\Sigma\mu = \int_W \epd{\Sigma,W}\cdot\mu.
\]
This formula serves as the motivation for the term {\em equivariant
  Poincar\'e dual.} This definition naturally extends to the case of an analytic
  subvariety of $\CC^n$  defined in the neighborhood of the origin, or
  more generally, to any $T$-invariant cycle in $\CC^n$.
  
Note that $\epd{\Sigma,W}$ is determined by the maximal dimensional components of $\Sigma$ and in fact it can be characterised and axiomatised by some of its basic properties. These are carefully stated in \cite{bsz} Proposition 2.3 and proofs can be found in \cite{rossmann},\cite{voj},\cite{milsturm}, the list reads as: positivity, additivity on maximal dimensional component, deformation invariance, symmetry and finally a formula for complete intersections of hypersurfaces. These properties provide an algorithm for computing $\epd{\Sigma,W}$ as follows (see \cite{milsturm} \S8.5 and \cite{bsz,b2} for details): we pick any monomial order on the coordinates of $W$ and apply Groebner deformation on the ideal of $\Sigma$ to deform it onto its initial monomial ideal. The spectrum of this monomial ideal is the union of some coordinate subspaces in $W$ with multiplicities whose equivariant dual is then given as the sum of the duals of the maximal dimensional subspaces by the additivity property. For these linear subspaces the formula for complete intersections has the following special form. Let $W=\mathrm{Spec}(\CC[y_1,\ldots, y_N])$ acted on by the $n$-dimensional torus $T$ diagonally where the weight of $y_i$ is $\eta_i$. Then for every subset $\mathbf{i}\subset\{1,\ldots ,
  N\}$ we have
\begin{equation}\label{mdegformula}
\epd{
\{y_{i}=0,\,i\in\mathbf{i}\},W}=
\prod_{i\in\mathbf{i}}\eta_i.
\end{equation}
The weights $\eta_1,\ldots \eta_N$ are linear forms of the basis elements $\l_1,\ldots \l_n$ of $\liet^*$. Let $\coeff(\eta_i,j)$ denote the coefficient of $\l_j$ in $\eta_i$ ($1\le i\le N, 1\le j \le n$) and introduce the notation
\[\deg(\eta_1,\ldots, \eta_N;m)=\#\{i;\;\coeff(\eta_i,m)\neq 0\}\}.\]
Let $\Sigma \subset W$ be a $T$-invariant subvariety. It is clear from the formula \eqref{mdegformula} that the $\l_m$-degree of $\epd{\Sigma, W}$ satisfies 
\begin{equation}\label{degree}
\deg_{\l_m}\epd{\Sigma,W} \le \deg(\eta_1,\ldots, \eta_N;m)
\end{equation}
for any $1\le m \le n$. 

Finally we state one of the basic properties listed in \cite{bsz} Proposition 2.3 as a lemma here as this will be used repeatedly later. 
\begin{lemma}[Elimination property, \cite{bsz} Prop 2.3] Let $\Sigma\subset W$ be a closed $T$-invariant
  subvariety and denote by $I(\Sigma)$ the ideal of functions
  vanishing on $\Sigma$. Fix a polynomial $f\in \CC[y_1,\dots,y_N]$ of
  weight $\eta_0$, and let $\Sigma_f$ be the variety in
  $W\oplus\CC y_0$ with ideal generated by $I(\Sigma)$ and
  $y_0-f$. Then
\[\epd{\Sigma_f,W\oplus\CC y_0}=\eta_0\cdot \epd{\Sigma,W}\]
\end{lemma}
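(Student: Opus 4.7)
The strategy is to realise $\Sigma_f$ as a flat $T$-equivariant deformation of the product $\Sigma\times\{0\}$ inside $W'=W\oplus\CC y_0$, and then apply the transverse intersection rule for equivariant Poincar\'e duals.

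First, I consider for each $s\in\CC$ the $T$-invariant subvariety
\[\Sigma_s=\{(y,y_0)\in W'\,:\,y\in\Sigma,\ y_0=s\,f(y)\}.\]
Each $\Sigma_s$ is $T$-invariant because the coordinate $y_0$ has weight $\eta_0$ and $f$ has weight $\eta_0$, so the defining function $y_0-sf$ is a $T$-semi-invariant of weight $\eta_0$. The projection $(y,y_0)\mapsto y$ identifies $\Sigma_s$ with $\Sigma$ for every $s$, so the total space $\bigcup_{s\in\CC}\Sigma_s\times\{s\}\subset W'\times\CC$ is isomorphic to $\Sigma\times\CC$ and is in particular flat over the parameter line. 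By the deformation invariance axiom of $\epd{\cdot,\cdot}$, the polynomial $\epd{\Sigma_s,W'}$ is independent of $s$, so
\[\epd{\Sigma_f,W'}=\epd{\Sigma_1,W'}=\epd{\Sigma_0,W'}=\epd{\Sigma\times\{0\},W'}.\]

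Second, I compute this limit value. The variety $\Sigma\times\{0\}$ is the intersection inside $W'$ of the cylinder $\pi^{-1}(\Sigma)=\Sigma\oplus\CC y_0$ with the coordinate hyperplane $H=\{y_0=0\}$, where $\pi:W'\to W$ is the projection; the codimensions add correctly, so the intersection is transverse. Since $\pi^{-1}(\Sigma)$ is pulled back from $W$, one has $\epd{\pi^{-1}(\Sigma),W'}=\epd{\Sigma,W}$ viewed as an element of $S^\bullet\liet^*$, and the hyperplane $H$ has dual $\eta_0$ by \eqref{mdegformula}. The multiplicativity of $\epd{\cdot,\cdot}$ under transverse intersection, which is the special case of the complete intersection formula in Proposition 2.3 with one added hyperplane, then yields
\[\epd{\Sigma\times\{0\},W'}=\eta_0\cdot\epd{\Sigma,W},\]
as required.

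The only nontrivial point is verifying that the family $\{\Sigma_s\}_{s\in\CC}$ is admissible for the deformation invariance axiom, i.e.\ that it is a $T$-equivariant flat family of $T$-invariant cycles of constant dimension. This is transparent here because each fibre is isomorphic to the fixed variety $\Sigma$ via the coordinate projection, and the $T$-action on $W'\times\CC$ (trivial on the last factor) preserves every fibre. Once this is in hand, both remaining steps are immediate consequences of the axioms in Proposition 2.3.
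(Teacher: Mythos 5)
Your proof is correct, and it is essentially the standard argument behind \cite{bsz} Prop.~2.3 (which this paper quotes without proof): the flat family $y_0=sf(y)$ degenerating $\Sigma_f$ to $\Sigma\times\{0\}$ is the geometric form of the Gr\"obner degeneration with respect to an order eliminating $y_0$, where $\mathrm{in}(y_0-f)=y_0$. The only step I would tighten is the last one: ``multiplicativity under transverse intersection'' is not literally among the listed axioms, but the instance you need, $\epd{\Sigma\times\{0\},W\oplus\CC y_0}=\eta_0\cdot\epd{\Sigma,W}$, follows directly from Definition \ref{defepd} since $\mathrm{Thom}_K(W\oplus\CC y_0)=\mathrm{Thom}_K(W)\wedge\mathrm{Thom}_K(\CC y_0)$ and the integral over the product cycle factors, giving $\epd{\Sigma,W}\cdot\epd{\{0\},\CC y_0}=\epd{\Sigma,W}\cdot\eta_0$.
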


\begin{example} Let $W=\CC^4$ endowed with a
$T=(\CC^*)^3$-action, whose weights $\eta_1, \eta_2,\eta_3$ and
$\eta_4$ span $\mathfrak{t}^*$, and satisfy
$\eta_1+\eta_3=\eta_2+\eta_4$. Choose $p=(1,1,1,1)\in W$; then the affine toric variety
\[\overline{T\cdot p}=\{(y_1,y_2,y_3,y_4)\in\CC^4;\; y_1y_3=y_2y_4\}.\]
is a hypersurface and its equivariant dual is given by the weight of the equation: 
\[\epd{\overline{T\cdot p},W} = \eta_1+\eta_3=\eta_2+\eta_4.\]
An other way to see this is to fix the monomial order $>$ induced from $y_1>y_2>y_3>y_4$, then the ideal $I=(y_1y_3-y_2y_4)$ has initial ideal $in_I=(y_1y_3)$ whose spectrum is the union of the hyperplanes $\{y_1=0\}$ and $\{y_3=0\}$ with duals $\eta_1$, $\eta_3$ respectively. 
\end{example}

\begin{remark}\label{remark:topdef}
An alternative and slightly more general topological definition of the equivariant dual is the following, see \cite{fultonnotes,kaz97,eg97} for details. For a Lie group $G$ let $EG\to BG$ be a right principal $G$-bundle with $EG$ contractible. Such a bundle is universal in the topological setting: if $E\to B$ is any principal $G$-bundle, then there is a map $B \to BG$, unique up to homotopy, such that $E$ is isomorphic to the pullback of $EG$. If $X$ is a smooth algebraic $G$-variety then the topological definition of the $G$-equivariant cohomology of $X$ is 
\[H_G^*(X)=H^*(EG \times_G X).\]
If $Y$ is a $G$-invariant subvariety then $Y$ represents a $G$-equivariant cohomology class in the equivariant cohomology of $X$, namely the ordinary Poincar\'e dual of $EG \times_G Y$ in $EG \times_G X$. This is the equivariant dual of $Y$ in $X$:
\[\epd{Y,X}=\mathrm{PD}(EG \times_G Y, EG \times_G X).\]
\end{remark}

\subsection{The Rossman formula} \label{subsec:rossman} Let $Z$ be a complex manifold with a holomorphic $T$-action, and let
$M\subset Z$ be a $T$-invariant analytic subvariety with an isolated
fixed point $p\in M^T$. Then one can find local analytic coordinates
near $p$, in which the action is linear and diagonal. Using these
coordinates, one can identify a neighborhood of the origin in $\TT_pZ$
with a neighborhood of $p$ in $Z$. We denote by $\tc_pM$ the part of
$\TT_pZ$ which corresponds to $M$ under this identification;
informally, we will call $\tc_pM$ the $T$-invariant {\em tangent cone}
of $M$ at $p$. This tangent cone is not quite canonical: it depends on
the choice of coordinates; the equivariant dual of
$\Sigma=\tc_pM$ in $W=\TT_pZ$, however, does not. Rossmann named this
 the {\em equivariant multiplicity of $M$ in $Z$ at $p$}:
\begin{equation}\label{emult}
   \emu_p[M,Z] \overset{\mathrm{def}}= \epd{\tc_pM,\TT_pZ}.
\end{equation}

\begin{remark}
In the algebraic framework one might need to pass to the {\em
tangent scheme} of $M$ at $p$ (cf. \cite{fulton}). This is canonically
defined, but we will not use this notion.
\end{remark}
The analog of the Atiyah-Bott formula for singular subvarieties of smooth ambient manifolds is the following
\begin{proposition}[Rossmann's localisation formula \cite{rossmann}]\label{rossman} Let $\mu \in H_T^*(Z)$ be an equivariant class represented by a holomorphic equivariant map $\mathfrak{t} \to\Omega^\bullet(Z)$. Then 
\begin{equation}
  \label{rossform}
  \int_M\mu=\sum_{p\in M^T}\frac{\emu_p[M,Z]}{\mathrm{Euler}^T(\TT_pZ)}\cdot\mu^{[0]}(p),
\end{equation}
where $\mu^{[0]}(p)$ is the differential-form-degree-zero component
of $\mu$ evaluated at $p$.  
\end{proposition}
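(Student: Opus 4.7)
The plan is to reduce the integral $\int_M\mu$ to a sum of purely local contributions at each fixed point, where the local picture can be analyzed by combining the defining property of the equivariant Poincar\'e dual with the standard Atiyah--Bott formula applied to a smooth vector space.

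First I would carry out a localization argument of the kind used to prove Theorem \ref{abbv} itself. Using an invariant Hermitian metric on $Z$ one produces an equivariant $1$-form $\beta$ on $Z\setminus Z^T$ with $d_K\beta=1$, and couples it to an invariant cutoff supported in arbitrarily small $T$-invariant neighborhoods $U_p$ of the isolated fixed points $p\in M^T\subseteq Z^T$. Since $\mu$ is equivariantly closed and compactly supported, a $d_K$-exactness computation yields
\[
\int_M\mu=\sum_{p\in M^T}\int_{M\cap U_p}\mu,
\]
with the complement of the $U_p$'s contributing nothing.

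Next, because each $p$ is an isolated fixed point of the torus action on the smooth complex manifold $Z$, holomorphic equivariant linearization provides a $T$-equivariant biholomorphism between $U_p$ and an open neighborhood of $0$ in $W:=\TT_pZ$, carrying $M\cap U_p$ onto the tangent cone $\tc_pM$ near the origin. After pulling back $\mu|_{U_p}$ to a compactly supported equivariantly closed form $\tilde\mu$ on $W$, I apply the defining property \eqref{vergneepd} of the equivariant Poincar\'e dual:
\[
\int_{\tc_pM}\tilde\mu=\int_W\epd{\tc_pM,W}\cdot\tilde\mu=\emu_p[M,Z]\cdot\int_W\tilde\mu,
\]
since $\emu_p[M,Z]\in S^\bullet\mathfrak{t}^*$ is a polynomial on $\mathfrak{t}$ and commutes through the integral.

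Finally, the linear $T$-action on $W$ has only the origin as a fixed point (again because $p$ is isolated in $Z^T$), so applying the ordinary Atiyah--Bott formula to $W$, or to a smooth equivariant compactification whose additional fixed points lie outside the support of $\tilde\mu$, gives
\[
\int_W\tilde\mu=\frac{\tilde\mu^{[0]}(0)}{\mathrm{Euler}^T(\TT_pZ)}=\frac{\mu^{[0]}(p)}{\mathrm{Euler}^T(\TT_pZ)}.
\]
Summing the resulting local contributions over $p\in M^T$ yields the formula \eqref{rossform}. The most delicate step is the transport to the linear model: the equivariant linearization only identifies $M\cap U_p$ with $\tc_pM$ on a neighborhood of $0$, and the tangent cone itself depends on the choice of coordinates, so one must verify that extending $\tc_pM$ to all of $W$ (outside the support of $\tilde\mu$) does not change the integral. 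What makes the final formula coordinate-independent is precisely the fact, recorded after \eqref{emult}, that although $\tc_pM\subset\TT_pZ$ is non-canonical, its equivariant Poincar\'e dual $\emu_p[M,Z]$ is intrinsic.
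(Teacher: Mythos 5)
The paper offers no proof of this proposition: it is quoted directly from Rossmann \cite{rossmann}, so there is no internal argument to compare yours against. Your outline is the standard (and correct) proof: localize to the fixed points, linearize, trade the singular local piece for its equivariant Poincar\'e dual via \eqref{vergneepd}, and evaluate the resulting integral over the linear model. Two steps are stated too loosely, however. First, the identity $\int_M\mu=\sum_{p}\int_{M\cap U_p}\mu$ is not literally true, since $\mu$ is not supported in $\bigcup_p U_p$, and consequently ``pulling back $\mu|_{U_p}$ to a compactly supported equivariantly closed form'' is not possible as written. The correct output of the Berline--Vergne argument is the form $\mu-d_K\bigl(\rho\,\beta\,(d_K\beta)^{-1}\mu\bigr)$, with $\beta$ normalized so that $\iota(X_Z)\beta=1$ (not $d_K\beta=1$, which cannot hold) and $\rho$ an invariant cutoff vanishing near each $p$; this form is equivariantly closed, compactly supported in $\bigcup_p U_p$, agrees with $\mu$ in form-degree zero at each fixed point, and is what must be transported to $W=\TT_pZ$. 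Second, discarding the $d_K$-exact correction uses Stokes' theorem on the singular variety $M$, i.e.\ the closedness of the current of integration over an analytic subvariety; this deserves explicit mention. With those repairs the rest is sound: $\epd{\tc_pM,\TT_pZ}$ is a polynomial on $\mathfrak t$ and factors out of the integral, and $\int_W\tilde\mu=\tilde\mu^{[0]}(0)/\mathrm{Euler}^T(\TT_pZ)$ follows directly from \eqref{thomg} together with $\int_W\mathrm{Thom}_K(W)=1$ and the fact that the Thom class restricts to the Euler class at the origin, so no compactification of $W$ is needed. Your closing remark --- that the coordinate-dependence of $\tc_pM$ is harmless because only its equivariant dual $\emu_p[M,Z]$ enters the formula --- is exactly the right point to verify.
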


\subsection{Equivariant localisation on $\widetilde{CX}^{[k+1]}$ for $k\le n$}\label{subsec:loc}
In this subsection we start to develop a two step equivariant localisation method on $\widetilde{CX}^{[k+1]}$ using the Rossmann formula. As the partial resolution $\widetilde{CX}^{[k+1]}$ described in \S\ref{subsec:blowingup} is defined only for $k\le n$ we impose this condition in this section. 

Recall from $\S$\ref{subsec:blowingup} the blow-up definition  
\[\widetilde{CX}^{[k+1]}_p =\GL(n) \times_{P_{k,n}}  \overline{P_{k,n}\cdot \mathfrak{p}_{k,n}} \to \overline{CX}^{[k+1]}_p\]
sitting in $\flag_k(\symdot)$ which fibres over the flag manifold $\GL(n)/P_{k,n}=\flag_k(\CC^n)$:
\begin{equation*}
\xymatrix{\widetilde{CX}^{[k+1]}_p \ar[r]^-{\rho} \ar[d]^{\mu} & \overline{CX}^{[k+1]}_p \subset \flag_k(\symdot) \\
\Hom(\CC^k,\CC^n)/B_k=\flag_k(\CC^n)& } 
\end{equation*}
and the fibres of $\mu$ are isomorphic to $\overline{P_{k,n}\cdot p_k}\subset \flag_k(\symdot)$.
The corresponding fibred version of this diagram over $X$ gives the partial resolution of the curvilinear Hilbert scheme $\widetilde{CX}^{[k+1]} \to \overline{CX}^{[k+1]}$:
\begin{equation}\label{diagram}
\xymatrix{\widetilde{CX}^{[k+1]} \ar[r]^-{\rho} \ar[d]^{\mu} & \overline{CX}^{[k+1]}\subset \flag_k(\symdotx) \\
\flag_k(T_X^*) \ar[d]^{\tau} & \\
X & } 
\end{equation}
where $\flag_k(T_X^*)$ is the flag bundle of the cotangent bundle $T_X^*$, and over every point $p\in X$ we get back the previous diagram, that is, the fibres of $\tilde{\pi}=\tau \circ \mu: \widetilde{CX}^{[k+1]} \to X$ are canonically isomorphic to $\widetilde{CX}^{[k+1]}_p$. 

Fix a Chern polynomial $P=P(c_1,\ldots, c_{r(k+1)})$ of degree $\dim \overline{CX}^{[k+1]}=n+(n-1)k$ where $c_i=c_i(F^{[k+1]})$ are the Chern classes of the tautological rank $r(k+1)$ bundle on the curvilinear Hilbert scheme.  
To evaluate the integral $\int_{\widetilde{CX}^{[k+1]}}P$ we can first integrate (push forward) along the fibres of $\tilde{\pi}:\widetilde{CX}^{[k+1]} \to X$ followed by integration over $X$. These fibres are canonically isomorphic to $\widetilde{CX}^{[k+1]}_p$ endowed with a natural $\GL(n)$ action induced by the standard $\GL(n)$ action on $\CC^n$ and we can use this action to perform torus equivariant localisation on $\widetilde{CX}^{[k+1]}_p$ to integrate along the fibres. Recall that $K=U(1)^n$ is the maximal compact subgroup of the maximal complex torus $T$ of $\GL(n,\CC)$ and $\mathfrak{t}=\mathrm{Lie}(K)$. Take a fibrewise equivariant extension 
\[\alpha \in H_{T}^f=S^\bullet \mathfrak{t}^*\otimes (\Omega^\bullet(\widetilde{CX_p}^{[k+1]})^{\kt} \otimes \Omega^\bullet(X)\]
with respect to the torus action on $\widetilde{CX_p}^{[k+1]}$. Then $\alpha$ is a polynomial function on $\mathfrak{t}$ with values in the $\Omega^\bullet(X)$-module $\Omega^\bullet(\widetilde{CX_p}^{[k+1]})^{K} \otimes \Omega^\bullet(X)$.
Integration along the fibre is the map  
\[H_T^f \to S^\bullet \mathfrak{t}^* \otimes \Omega^\bullet(X)\] 
defined as
\[(\int \alpha)(X)=\int_{\widetilde{CX}^{[k+1]}_p} \alpha^{[\mathrm{dim}(\widetilde{CX}^{[k+1]}_p)]}(X) \text{ for all } X\in \mathfrak{t}\]
where $\alpha^{[\mathrm{dim}(\widetilde{CX}^{[k+1]}_p)]}$ is the $(\Omega^\bullet(\widetilde{CX_p}^{[k+1]})^{K}$-degree-$d$ part of $\alpha$ with $d=\dim(\widetilde{CX}^{[k+1]}_p)$.  In short, we consider the $\Omega^\bullet(X)$ part of $\alpha$ as a constant and apply the standard localisation map \eqref{localisationmap} on $S^\bullet \mathfrak{t}^*\otimes (\Omega^\bullet(\widetilde{CX_p}^{[k+1]})^{K}$.  

Note that $\mu:\widetilde{CX}^{[k+1]}_p \to \flag_k(\CC^n)$ gives a $\GL(n)$-equivariant fibration over the flag manifold $\flag_k(\CC^n)$. Let $e_1,\ldots, e_n \in \CC^n$ be an eigenbasis of $\CC^n$ for the $T$ action on $\widetilde{CX}^{[k+1]}_p$ with weights $\l_1,\ldots, \l_n\in \mathfrak{t}^*$ and let 
\[\ff=(\langle e_1 \rangle \subset \langle e_1,e_2 \rangle \subset \ldots \subset \langle e_1,\ldots,e_k \rangle \subset \CC^n)\]
denote the standard flag in $\CC^n$ fixed by the parabolic $P_{k,n} \subset \GL(n)$.
Since the torus action on $\widetilde{CX}^{[k+1]}_p$ is obtained by the restriction of a $\GL(n)$-action to
its subgroup of diagonal matrices $T_n$, the Weyl group of
permutation matrices $S_n$ acts transitively on the fixed points set $\flag_k(\CC^n)^{T_n}$ taking the standard flag $\ff$ to $\sigma(\ff)$ and Proposition \ref{abbv} gives us   
\begin{equation} \label{flagloc}
\int_{\widetilde{CX}^{[k+1]}_p}\alpha= \sum_{\sigma\in\sg n/\sg{n-k}}
\frac{\alpha_{\sigma(\ff)}}{\prod_{1\leq m \leq
k}\prod_{i=m+1}^n(\lambda_{\sigma\cdot
    i}-\lambda_{\sigma\cdot m})},
\end{equation}
where 
\begin{itemize}
\item $\sigma$ runs over the ordered $k$-element subsets of $\{1,\ldots, n\}$ labeling the fixed flags $\sigma(\ff)=(\langle e_{\sigma(1)} \rangle \subset \ldots \subset \langle e_{\sigma(1)},\ldots, e_{\sigma(k)} \rangle \subset \CC^n)$ in $\CC^n$, 
\item $\prod_{1\leq m \leq k}\prod_{i=m+1}^n(\lambda_{\sigma(i)}-\lambda_{\sigma(m)})$ is the equivariant Euler class of the tangent space of $\flag_k(\CC^n)$ at $\s(\ff)$,
\item if $\widetilde{CX}^{[k+1]}_{\sigma(\ff)}=\mu^{-1}(\sigma(\ff))$ denotes the fibre then $\alpha_{\sigma(\ff)}=(\int_{\widetilde{CX}^{[k+1]}_{\sigma(\ff)}} \alpha)^{[0]}(\sigma(\ff))\in S^\bullet \mathfrak{t}^* \otimes \Omega^\bullet(X)$ is the differential-form-degree-zero part evaluated at $\sigma(\ff)$ and $\alpha_{\sigma(\ff)}=\sigma \cdot \alpha_\ff$ with respect to the natural Weyl group action on $S^\bullet \mathfrak{t}^*$. 
\end{itemize}
 In particular, when $\alpha=\alpha(\theta_1,\ldots, \theta_r,\eta_1,\ldots, \eta_k)$ is a bi-symmetric polynomial in the Chern roots $\theta_i$ of the pull-back of $F$ over $\widetilde{CX}^{[k+1]}_p \subset \flag_k(\symdot)$ and the Chern roots $\eta_j$ of the tautological rank $k$ bundle $\cale$ then $\alpha_{\ff}$ is a polynomial in two sets of variables: in the basic weights 
$\lambda=(\lambda_1\ldots \lambda_n)$ and in the 
$\theta=(\theta_1\ldots \theta_r)$. Since $\mu^{-1}(\ff)$ is invariant under
$P_{k,n}$ only, this polynomial is not necessarily symmetric in the
$\lambda$'s. Note that $\alpha_\ff$ contains only Chern roots of the tautological rank $k$ bundle $\cale$ and therefore it does not depend on
  the last $n-k$ basic weights: $\lambda_{k+1},\dots,\lambda_n \in \mathfrak{t}^*$.
\[\alpha_{\ff}=\alpha_{\ff}(\theta_1,\ldots, \theta_r,\lambda_1,\ldots, \lambda_k)\]
and 
\begin{equation}\label{alphasigmaf}
\alpha_{\s(\ff)}=\sigma \cdot \alpha_\ff=\alpha_\ff(\theta_1,\ldots, \theta_r,\l_{\s(1)},\ldots ,\l_{\s(k})\in S^\bullet \mathfrak{g}^* \otimes H^*(X)
\end{equation}
is the $\sigma$-shift of the polynomial $\alpha_{\ff}$ corresponding to the distinguished fixed flag $\ff$.

\subsection{Transforming the localisation formula into iterated residue}\label{subsec:transform}

In this section we transform the right hand side of \eqref{flagloc} into an iterated residue. This step turns out to be crucial in handling the combinatorial complexity of the Atiyah-Bott localisation formula and captures the symmetry of the fixed point data in an efficient way which enables us to prove the vanishing of the contribution of all but one of the fixed points. 

To describe this formula, we will need the notion of an {\em iterated
  residue} (cf. e.g. \cite{szenes}) at infinity.  Let
$\omega_1,\dots,\omega_N$ be affine linear forms on $\CC^k$; denoting
the coordinates by $z_1,\ldots, z_k$, this means that we can write
$\omega_i=a_i^0+a_i^1z_1+\ldots + a_i^kz_k$. We will use the shorthand
$h(\bz)$ for a function $h(z_1\ldots z_k)$, and $\dbz$ for the
holomorphic $n$-form $dz_1\wedge\dots\wedge dz_k$. Now, let $h(\bz)$
be an entire function, and define the {\em iterated residue at infinity}
as follows:
\begin{equation}
  \label{defresinf}
 \ires \frac{h(\bz)\,\dbz}{\prod_{i=1}^N\omega_i}
  \overset{\mathrm{def}}=\left(\frac1{2\pi i}\right)^k
\int_{|z_1|=R_1}\ldots
\int_{|z_k|=R_k}\frac{h(\bz)\,\dbz}{\prod_{i=1}^N\omega_i},
 \end{equation}
 where $1\ll R_1 \ll \ldots \ll R_k$. The torus $\{|z_m|=R_m;\;m=1 \ldots
 k\}$ is oriented in such a way that $\res_{z_1=\infty}\ldots
 \res_{z_k=\infty}\dbz/(z_1\cdots z_k)=(-1)^k$.
We will also use the following simplified notation: $\sires \overset{\mathrm{def}}=\ires.$

In practice, one way to compute the iterated residue \eqref{defresinf} is the following algorithm: for each $i$, use the expansion
 \begin{equation}
   \label{omegaexp}
 \frac1{\omega_i}=\sum_{j=0}^\infty(-1)^j\frac{(a^{0}_i+a^1_iz_1+\ldots
   +a_{i}^{q(i)-1}z_{q(i)-1})^j}{(a_i^{q(i)}z_{q(i)})^{j+1}},
   \end{equation}
   where $q(i)$ is the largest value of $m$ for which $a_i^m\neq0$,
   then multiply the product of these expressions with $(-1)^kh(z_1\ldots
   z_k)$, and then take the coefficient of $z_1^{-1} \ldots z_k^{-1}$
   in the resulting Laurent series.

We repeat the proof of the following iterated residue theorem from \cite{bsz}.
\begin{proposition}[(\cite{bsz} Proposition 5.4)]\label{abtoresidue} For any homogeneous polynomial $Q(\bz)$ on $\CC^k$ we have
\begin{equation}\label{flagres}
\sum_{\sigma\in\sg n/\sg{n-k}}
\frac{Q(\lambda_{\sigma(1)},\ldots ,\lambda_{\sigma(k)})}
{\prod_{1\leq m\leq k}\prod_{i=m+1}^n(\lambda_{\sigma\cdot
    i}-\lambda_{\sigma\cdot m})}=\sires
\frac{\prod_{1\leq m<l\leq k}(z_m-z_l)\,Q(\bz)\dbz}
{\prod_{l=1}^k\prod_{i=1}^n(\lambda_i-z_l)}
\end{equation}
\end{proposition}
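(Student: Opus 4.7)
My approach is to compute the iterated residue on the RHS directly via the classical residue theorem in each variable, and then match the resulting sum term-by-term with the Atiyah--Bott expression on the LHS.

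First I observe that, viewed as a rational function of $z_k$ with the remaining variables fixed, the integrand has only simple poles at $z_k=\lambda_i$, $i=1,\ldots,n$: the Vandermonde factor $\prod_{m<l}(z_m-z_l)$ and $Q(\bz)$ are polynomial, and no other denominator factor involves $z_k$. The same remains true for each subsequent variable after the previous substitutions. Applying $\res_{z_k=\infty}=-\sum_i\res_{z_k=\lambda_i}$ and iterating for $z_{k-1},\ldots,z_1$, I obtain, for any polynomial $P(\bz)$,
\[
\sires \frac{P(\bz)\dbz}{\prod_{l=1}^k\prod_{i=1}^n(\lambda_i-z_l)}
= \sum_{(i_1,\ldots,i_k)\in\{1,\ldots,n\}^k} \frac{P(\lambda_{i_1},\ldots,\lambda_{i_k})}{\prod_{l=1}^k\prod_{j\neq i_l}(\lambda_j-\lambda_{i_l})},
\]
where at each step the $(-1)$ from $\res_{\infty}=-\sum\res_{\text{finite}}$ cancels against the $(-1)$ coming from $\res_{z_l=\lambda_{i_l}}(\lambda_{i_l}-z_l)^{-1}=-1$.

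Next I apply this with $P(\bz)=\prod_{m<l}(z_m-z_l)\,Q(\bz)$. The Vandermonde factor vanishes on any tuple with a repetition, so only injective tuples $(\sigma(1),\ldots,\sigma(k))$ survive, and these correspond bijectively to cosets $\sigma\in\sg n/\sg{n-k}$ — exactly the index set of the Atiyah--Bott sum. The contribution of $\sigma$ is
\[
\frac{\prod_{m<l}(\lambda_{\sigma(m)}-\lambda_{\sigma(l)})\,Q(\lambda_{\sigma(1)},\ldots,\lambda_{\sigma(k)})}{\prod_{l=1}^k\prod_{j\neq\sigma(l)}(\lambda_j-\lambda_{\sigma(l)})}.
\]

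To finish, I match this with the LHS summand by splitting the denominator according to whether $j\in\mathrm{im}(\sigma)$ or lies in the complement $C=\{1,\ldots,n\}\setminus\mathrm{im}(\sigma)$. The factors with $j\in C$ reproduce the factors with $i>k$ in the LHS double product $\prod_{m=1}^k\prod_{i=m+1}^n(\lambda_{\sigma(i)}-\lambda_{\sigma(m)})$, which is symmetric in $\sigma(k+1),\ldots,\sigma(n)$ and therefore well-defined on the coset. The factors with $j\in\mathrm{im}(\sigma)\setminus\{\sigma(l)\}$ combine to $(-1)^{\binom{k}{2}}\prod_{m<l}(\lambda_{\sigma(m)}-\lambda_{\sigma(l)})^2$; one copy of the Vandermonde cancels against the numerator, and together with the sign $(-1)^{\binom{k}{2}}$ this reproduces the factors with $m<i\leq k$ in the LHS denominator. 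The only real obstacle is careful sign bookkeeping, and the coset-invariance check for the LHS summand; apart from that, the identity is an elementary consequence of the residue theorem applied in each variable independently.
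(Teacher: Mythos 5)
Your proof is correct and follows essentially the same route as the paper: both convert the iterated residue at infinity into a sum of finite residues via the residue theorem in one variable at a time and then match signs with the Atiyah--Bott sum over cosets. The only (cosmetic) difference is that the paper cancels the factor $(z_l-\lambda_j)$ from the Vandermonde against the denominator at each step, so repeated indices never arise, whereas you first derive the general formula over all tuples in $\{1,\ldots,n\}^k$ and then let the Vandermonde annihilate the non-injective ones.
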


\begin{proof}
  We compute the iterated residue \eqref{flagres} using the Residue
  Theorem on the projective line $\CC\cup\{\infty\}$.  The first
  residue, which is taken with respect to $z_k$, is a contour
  integral, whose value is minus the sum of the $z_k$-residues of the
  form in \eqref{flagres}. These poles are at $z_k=\lambda_j$,
  $j=1\ldots n$, and after canceling the signs that arise, we obtain the
  following expression for the right hand side of  \eqref{flagres}:
\[
\sum_{j=1}^n \frac{\prod_{1\leq m<l\leq
    k-1}(z_m-z_l)\,\prod_{l=1}^{k-1}(z_l-\lambda_j)\,Q(z_1\ldots
  z_{k-1},\lambda_j)\;dz_1\dots dz_{k-1}}
{\prod_{l=1}^{k-1}\prod_{i=1}^n(\lambda_i-z_l)\prod_{i\neq
    j}^n(\lambda_i-\lambda_j)}.
\]
After cancellation and exchanging the sum and the residue operation,
at the next step, we have
\[
(-1)^{k-1}\sum_{j=1}^n\res_{z_{k-1}=\infty} \frac{\prod_{1\leq
m<l\leq
    k-1}(z_m-z_l)\,Q(z_1\ldots z_{k-1},\lambda_j)\;dz_1\dots
  dz_{k-1}} {\prod_{i\neq j}^n
  \left((\lambda_i-\lambda_j)\prod_{l=1}^{k-1}(\lambda_i-z_l)\right)}.
\]
Now we again apply the Residue Theorem, with the only difference that
now the pole $z_{k-1}=\lambda_j$ has been eliminated. As a result,
after converting the second residue to a sum, we obtain
\[
(-1)^{2k-3}\sum_{j=1}^n\sum_{s=1,\,s\neq j}^n \frac{\prod_{1\leq
m<l\leq
    k-2}(z_l-z_m)\,Q(z_1\ldots
  z_{k-2},\lambda_s,\lambda_j)\;dz_1\dots dz_{k-2}}
{(\lambda_s-\lambda_j)\prod_{i\neq j,s}^n
  \left((\lambda_i-\lambda_j)(\lambda_i-\lambda_s)\prod_{l=1}^{k-1}(\lambda_i-z_l)\right)}.
\]
Iterating this process, we arrive at a sum very similar to
(\ref{flagloc}). The difference between the two sums will be the
sign: $(-1)^{k(k-1)/2}$, and that the $k(k-1)/2$ factors of the form
$(\lambda_{\sigma(i)}-\lambda_{\sigma(m)})$ with $1\le m<i\le k$ in
the denominator will have opposite signs. These two differences
cancel each other, and this completes the proof.
\end{proof}
\begin{remark}
  Changing the order of the variables in iterated residues, usually,
  changes the result. In this case, however, because all the poles are
  normal crossing, formula \eqref{flagres} remains true no matter in
  what order we take the iterated residues.
\end{remark}

Proposition \ref{abtoresidue}  together with \eqref{flagloc} and \eqref{alphasigmaf} gives
\begin{proposition}\label{propflag} Let $k\le n$ and $\alpha=\alpha(\theta_1,\ldots, \theta_r,\eta_1,\ldots, \eta_k)$ be a bi-symmetric polynomial in the Chern roots $\theta_i$ of the pull-back of $F$ over $\widetilde{CX}^{[k+1]}_p \subset \flag_k(\symdot)$ and the Chern roots $\eta_j$ of the tautological rank $k$ bundle $\cale$. Then 
\begin{equation*}
\int_{\widetilde{CX}^{[k+1]}_p}\alpha=\sires
\frac{\prod_{1\leq m<l\leq k}(z_m-z_l)\,\alpha_{\ff}(\theta_1,\ldots, \theta_r,z_1,\ldots, z_k)\dbz}
{\prod_{l=1}^k\prod_{i=1}^n(\lambda_i-z_l)}
\end{equation*}
where $s_i(\bz)=s_i(z_1,\ldots, z_k)$ denotes the $i$th symmetric polynomial in $z_1,\ldots, z_k$.
\end{proposition}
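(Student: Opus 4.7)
The plan is to combine the fibrewise Atiyah-Bott-type localisation formula \eqref{flagloc} (already applied via the fibration $\mu:\widetilde{CX}^{[k+1]}_p\to \flag_k(\CC^n)$) with the iterated residue identity of Proposition \ref{abtoresidue}. This is essentially a substitution: Proposition \ref{abtoresidue} is purpose-built to convert the kind of Weyl-group sum appearing in \eqref{flagloc} into an iterated residue, and we only need to identify the polynomial $Q$ correctly.

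First I would apply \eqref{flagloc} to $\alpha$ and use the identification \eqref{alphasigmaf}, which records that $\alpha_{\sigma(\ff)}$ equals the $\sigma$-shift $\alpha_\ff(\theta_1,\ldots,\theta_r,\lambda_{\sigma(1)},\ldots,\lambda_{\sigma(k)})$ of the polynomial $\alpha_\ff$. This yields
\[\int_{\widetilde{CX}^{[k+1]}_p}\alpha = \sum_{\sigma\in\sg n/\sg{n-k}}\frac{\alpha_\ff(\theta_1,\ldots,\theta_r,\lambda_{\sigma(1)},\ldots,\lambda_{\sigma(k)})}{\prod_{1\le m\le k}\prod_{i=m+1}^n(\lambda_{\sigma(i)}-\lambda_{\sigma(m)})}.\]
It is essential here that $\alpha_\ff$ really is a polynomial in $\lambda_1,\ldots,\lambda_k$ only, as was remarked just before \eqref{alphasigmaf}: the fibre $\mu^{-1}(\ff)$ is only $P_{k,n}$-invariant, and under the restriction to this fibre the Chern roots $\eta_1,\ldots,\eta_k$ of $\cale$ are specialised to the first $k$ basic weights.

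Next I would regard the Chern roots $\theta_1,\ldots,\theta_r$ of $F$ as formal parameters, independent of the torus variables, and set
\[Q(\bz) := \alpha_\ff(\theta_1,\ldots,\theta_r,z_1,\ldots,z_k),\]
viewed as a polynomial in $\bz$ with coefficients in $\QQ[\theta_1,\ldots,\theta_r]$. Decomposing $Q$ into its homogeneous pieces in $\bz$, Proposition \ref{abtoresidue} applies to each piece. Since both sides of that identity are $\QQ[\theta_1,\ldots,\theta_r]$-linear in $Q$, summing the homogeneous contributions gives
\[\sum_{\sigma\in\sg n/\sg{n-k}} \frac{Q(\lambda_{\sigma(1)},\ldots,\lambda_{\sigma(k)})}{\prod_{1\le m\le k}\prod_{i=m+1}^n(\lambda_{\sigma(i)}-\lambda_{\sigma(m)})}= \sires \frac{\prod_{1\le m<l\le k}(z_m-z_l)\,Q(\bz)\,\dbz}{\prod_{l=1}^k\prod_{i=1}^n(\lambda_i-z_l)}.\]
Substituting back the definition of $Q$ finishes the proof.

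I do not expect any serious obstacle; the only minor points to justify are the reduction of $\alpha_\ff$ to the first $k$ basic weights (noted above) and the extension of Proposition \ref{abtoresidue} from homogeneous to arbitrary polynomial $Q$, which follows from linearity of both sides of the residue identity in $Q$ together with the homogeneous decomposition just described.
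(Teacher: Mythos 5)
Your proposal is correct and follows exactly the route the paper takes: the paper derives Proposition \ref{propflag} by combining the fibrewise localisation formula \eqref{flagloc}, the identification \eqref{alphasigmaf} of $\alpha_{\sigma(\ff)}$ as the $\sigma$-shift of $\alpha_\ff(\theta_1,\ldots,\theta_r,\lambda_1,\ldots,\lambda_k)$, and the residue conversion of Proposition \ref{abtoresidue}. Your two supplementary remarks (that $\alpha_\ff$ depends only on the first $k$ basic weights, and that Proposition \ref{abtoresidue} extends from homogeneous to arbitrary $Q$ by linearity and homogeneous decomposition) are exactly the points left implicit in the paper.
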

Next, we proceed a second localisation on the fibre 
\[\widetilde{CX}^{[k+1]}_{\ff}=\mu^{-1}(\ff)\simeq \overline{P_{k,n}\cdot p_k}\subset \flag_k(\symdot)\]
 to compute $\alpha_\ff(\mathbf{\theta},\bz)$. Since $\widetilde{CX}^{[k+1]}_{\ff}$
 is invariant under the $T$-action on $\flag_k(\symdot)$, we can apply Rossmann's integration formula, see Proposition \ref{rossman}. Note that the fibre $\widetilde{CX}^{[k+1]}_{\ff}=\overline{P_{k,n}\cdot p_k}$ sits in the submanifold 
\[\flag_k^*(\symdot)=\{V_1 \subset \ldots \subset V_k\subset \symdot: \dim(V_i)=i, V_i\subset \mathrm{Span}_\CC(e_\tau:\Sigma \tau \le i)\}\]
of $\flag_k(\symdot)$. Since the subspaces 
\[W_i=\mathrm{Span}_\CC(e_\tau:\Sigma \tau \le i)\subset \symdot\]
are invariant under the upper Borel $B_n \subset \GL(n)$ which fixes the flag $\ff$,
\[\flag_k^*(\symdot) \subset \flag_k(\symdot)\]
is a $B_n$-invariant subvariety. 

We apply the Rossman formula for $M=X_\ff, Z=\flag_k^*(\symdot)$ and $\mu=\alpha_\ff$. The fixed points on 
\[Z=\flag_k^*(\symdot) \subset \bigoplus_{i=1}^k W_1 \wedge \ldots \wedge W_i\]  
are parametrised by {\it admissible} sequences of partitions $\bipi=(\pi_1,\ldots, \pi_k)$. We call a sequence of partitions $\bipi=(\pi_1 \ldots \pi_k)\in\Pi^{\times d}$ admissible if
\begin{enumerate}
\item $\Sigma \pi_l\le l$ for $1\le l \le k$, and 
\item $\pi_l\neq\pi_m$ for $1\leq l\neq m\leq k$. 
\end{enumerate}
We will denote the set of admissible sequences of length $k$ by $\Bipi$. The corresponding fixed point is then 
\[\bigoplus_{i=1}^k e_{\pi_1} \wedge \ldots \wedge e_{pi_i} \in \bigoplus_{i=1}^k W_1 \wedge \ldots \wedge W_i\]
where $e_{\pi}=\prod_{j\in \pi}e \in \sym^{|\pi|}\CC^n$.

Then the Rossman formula \eqref{rossform} and Proposition \ref{propflag} give us 
\begin{proposition}\label{propint} Let $k\le n$ and let $\alpha=\alpha(\theta_1,\ldots, \theta_r,\eta_1,\ldots, \eta_k)$ be a bi-symmetric polynomial in the Chern roots $\theta_i$ of the pull-back of $F$ over $\widetilde{CX}^{[k+1]}_p \subset \flag_k(\symdot)$ and the Chern roots $\eta_j$ of the tautological rank $k$ bundle $\cale$. Then 
\begin{equation}\label{intnumberone} 
\int_{\widetilde{CX}^{[k+1]}_p}\alpha=\sum_{\bipi\in\Bipi \cap \overline{P_{k,n} \cdot p_k}} \sires \frac{
Q_\bipi(\bz)\,\prod_{m<l}(z_m-z_l) \alpha(\mathbf{\theta},z_{\pi_1},\ldots, z_{\pi_k})}{
\prod_{l=1}^k\prod_{\tau\leq l}^{\tau\neq\pi_1\ldots \pi_l}
(z_{\tau}-z_{\pi_l})  \prod_{l=1}^k\prod_{i=1}^n(\lambda_i-z_l)} \,\dbz.
  \end{equation}
where $Q_\bipi(\bz)=\emu_\bipi[X_\ff,\flag_k^*]$ and $z_\pi=\sum_{i\in \pi}z_i$.
\end{proposition}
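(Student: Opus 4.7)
The strategy is a two-step equivariant localisation. First apply Proposition \ref{propflag}, which handles the outer fibration $\mu:\widetilde{CX}^{[k+1]}_p \to \flag_k(\CC^n)$: this expresses $\int_{\widetilde{CX}^{[k+1]}_p}\alpha$ as an iterated residue with numerator $\prod_{m<l}(z_m-z_l)\,\alpha_{\ff}(\theta,\bz)$, where $\alpha_{\ff}(\theta,\bz)$ is the integral of $\alpha$ over the fibre $\widetilde{CX}^{[k+1]}_{\ff}=\mu^{-1}(\ff)\simeq \overline{P_{k,n}\cdot p_k}$. All that remains is to compute this fibre integral in terms of the data at the $T$-fixed points of the fibre, and substitute the result into the outer residue.

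For the second step apply Rossmann's formula (Proposition \ref{rossman}) to the pair $M=\widetilde{CX}^{[k+1]}_{\ff}\subset Z=\flag_k^*(\symdot)$. This requires identifying $\flag_k^*(\symdot)$ as a smooth $B_n$-invariant ambient manifold that contains $\overline{P_{k,n}\cdot p_k}$; this is the statement just above the proposition, and it follows from the fact that the $T$-weight filtration $W_1\subset\dots\subset W_k$ of $\symdot$ is preserved by $P_{k,n}$, so $\overline{P_{k,n}\cdot p_k}$ is contained in the locally closed subvariety where $V_i\subset W_i$. The $T$-fixed points on $\flag_k^*(\symdot)$ are parametrised by choosing, at each step $l$, a monomial basis vector $e_{\pi_l}$ with $\Sigma\pi_l\le l$ and all $\pi_l$ distinct to produce a genuine flag, so they are exactly the admissible sequences $\bipi\in\Bipi$.

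Next one reads off the three pieces of data at a fixed point $\bipi$. The Chern roots of the tautological bundle $\cale$ at $\bipi$ are the weights of $e_{\pi_1},\dots,e_{\pi_k}$, namely $z_{\pi_1},\dots,z_{\pi_k}$, so the degree-zero evaluation $\alpha_{\ff}^{[0]}(\bipi)$ equals $\alpha(\theta,z_{\pi_1},\dots,z_{\pi_k})$. The equivariant tangent space of $\flag_k^*(\symdot)$ at $\bipi$ decomposes as the sum over $1\le l\le k$ of the moves $e_\tau\mapsto e_{\pi_l}$ for $\tau\le l$ with $\tau\ne\pi_1,\dots,\pi_l$, giving Euler class $\prod_{l=1}^k\prod_{\tau\le l,\,\tau\ne\pi_1,\dots,\pi_l}(z_\tau-z_{\pi_l})$. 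Finally, by definition $Q_\bipi(\bz)=\emu_\bipi[\widetilde{CX}^{[k+1]}_{\ff},\flag_k^*(\symdot)]$. Plugging these into Rossmann's formula gives an expression for $\alpha_{\ff}(\theta,\bz)$, and substituting into the outer residue from Proposition \ref{propflag} yields \eqref{intnumberone}.

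The restriction of the sum to those $\bipi$ lying in $\overline{P_{k,n}\cdot p_k}$ is automatic: for $\bipi\notin\overline{P_{k,n}\cdot p_k}$ the $T$-invariant tangent cone of $\widetilde{CX}^{[k+1]}_{\ff}$ at $\bipi$ is empty, so the equivariant multiplicity $Q_\bipi$ vanishes and the corresponding term drops out. The main delicate point I expect is verifying that $\flag_k^*(\symdot)$ is smooth at each admissible $\bipi$ and that the tangent weights have precisely the shape $z_\tau-z_{\pi_l}$ claimed — this requires a careful local coordinate description of the partial flag variety, but it follows from the standard Schubert cell description once the weight filtration $\{W_i\}$ is fixed. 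Everything else is routine bookkeeping of the equivariant Chern roots and substitution into the outer iterated residue of Proposition \ref{propflag}.
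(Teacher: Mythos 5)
Your proposal is correct and follows exactly the paper's route: Proposition \ref{propflag} for the outer fibration over $\flag_k(\CC^n)$, then Rossmann's formula on the fibre $\overline{P_{k,n}\cdot p_k}\subset\flag_k^*(\symdot)$ with fixed points indexed by admissible sequences, which is precisely how the paper derives \eqref{intnumberone} (it states this in one line; you have supplied the bookkeeping). The only quibble is notational: the tangent directions of $\flag_k^*(\symdot)$ at $\bipi$ move $e_{\pi_l}$ towards $e_\tau$, not the reverse, but the weight $z_\tau-z_{\pi_l}$ you record is the correct one.
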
  

This formula reduces the computation of the tautological integrals $\int_{\widetilde{CX}^{[k+1]}_p}\alpha$ to determine the fixed point set $\Bipi \cap \widetilde{CX}^{[k+1]}_\ff$ and the multidegree $Q_\bipi(\bz)=\emu_\bipi[X_\ff,\flag_k^*]$ of the tangent cone of $\widetilde{CX}^{[k+1]}_\ff$ in $\flag_k^*(\symdot)$. 


\section{The residue vanishing theorem}

The first immediate problem arising with our formula \eqref{intnumberone} is that we do not have a complete description of the fixed point set $\Bipi \cap \widetilde{CX}^{[k+1]}_\ff$ and in fact it seems to be a hard question to decide which torus fixed points on $\flag_k^*(\symdot)$ sit in the orbit closure $\widetilde{CX}^{[k+1]}_\ff=\overline{P_{k,n}\cdot p_k}$.
The second problem we face is how to compute the multidegrees $Q_\bipi(\bz)=\emu_\bipi[X_\ff,\flag_k^*]$ for those admissible sequences which represent fixed points in $\overline{P_{k,n}\cdot p_k}$. We postpone this second problem to the next section and here we focus on the first question which has a particularly nice--and surprising--answer. Namely, we do not need to know which fixed points sit in $\overline{P_{k,n}\cdot p_k}$ because our limited knowledge on the equations of the $P_{k,n}$-orbit is enough to show that all but one terms on the right hand side of \eqref{intnumberone} vanish. This key feature of the iterated residue has already appeared in\cite{bsz} but here we need to prove a stronger version where the total degree of the rational forms are zero. 
We devote the rest of this section to the proof of  
\begin{theorem}[\textbf{The Residue Vanishing Theorem}]\label{vanishtheorem} Let $k+1\le n$ and $\alpha=\alpha(\theta_1,\ldots, \theta_r,\eta_1,\ldots, \eta_k)$ be a bi-symmetric polynomial in the Chern roots $\theta_i$ of the pull-back of $F$ over $\widetilde{CX}^{[k+1]}_p \subset \flag_k(\symdot)$ and the Chern roots $\eta_j$ of the tautological rank $k$ bundle $\cale$. Then 
\begin{enumerate}
\item All terms but the one corresponding to $\bipi_\dist=([1],[2],\ldots, [k])$ vanish in \eqref{intnumberone} leaving us with
\begin{equation} \label{intnumberoneandhalf}
\int_{\widetilde{CX}^{[k+1]}_p}\alpha=\sires \frac{
Q_{[1],\ldots, [k]}(\bz)\,\prod_{m<l}(z_m-z_l) \alpha(\mathbf{\theta},\bz)}{
\prod_{\um \tau \le l \le k}
(z_\tau-z_l)  \prod_{l=1}^k\prod_{i=1}^n(\lambda_i-z_l)} \,\dbz.
  \end{equation} 
 \item If $|\tau|\ge 3$ then $Q_k(\bz)=Q_{([1],\ldots, [k])}(\bz)$ is divisible by $z_\tau-z_l$ for all $l \ge \um \tau$ implying the simplified formula
\begin{equation} \label{intnumberthree}
\int_{\widetilde{CX}^{[k+1]}_p}\alpha=\sires \frac{
Q_k(\bz)\,\prod_{m<l}(z_m-z_l) \alpha(\theta,\bz)}{
\prod_{m+r \le l \le k}
(z_m+z_r-z_l)  \prod_{l=1}^k\prod_{i=1}^n(\lambda_i-z_l)} \,\dbz.
  \end{equation}
\end{enumerate}
\end{theorem}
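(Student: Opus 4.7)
My plan is to split the argument along the two parts of the statement, both starting from the iterated residue formula \eqref{intnumberone} of Proposition \ref{propint}.

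\emph{Part (1).} I would adapt the residue-vanishing strategy of \cite{bsz} (which handled the analogous vanishing in the Thom polynomial setting) to the tautological integral setting. Given an admissible sequence $\bipi \neq \bipi_\dist$, let $l$ be the smallest index at which $\pi_l \neq [l]$. Since the poles form a normal crossing arrangement (cf. the remark following Proposition \ref{abtoresidue}), I can compute the iterated residue with respect to $z_l$ first. Expanding via \eqref{omegaexp} the denominator factors $(\lambda_i - z_l)^{-1}$ (contributing $n$ factors with $z_l$ in leading position) and the factors $(z_\tau - z_{\pi_l})^{-1}$ in which $z_l$ is the leading variable, and counting the $z_l$-degrees of $Q_\bipi(\bz)$, $\prod_{m<l}(z_m-z_l)$ and $\alpha(\theta, z_{\pi_1},\ldots,z_{\pi_k})$, admissibility ($\Sigma\pi_l \le l$, distinctness of the $\pi_i$'s) combined with $\pi_l \neq [l]$ forces the $z_l$-pole count of the denominator to exceed the $z_l$-degree of the numerator by at least two, so the $z_l$-residue vanishes and with it the whole contribution of $\bipi$.

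\emph{The new ingredient absent from \cite{bsz}} is the total-degree-zero regime: the rational form has degree zero in the $z$'s rather than strictly negative, so the vanishing is tight and must exploit the precise degree contribution of the tautological factor $\alpha$. Although $\alpha$ has total weighted degree $n+(n-1)k$, its degree in any single Chern root is bounded by $r(k+1)$, which after the substitution $z_j\mapsto z_{\pi_j}$ bounds its $z_l$-degree by the rank of $F^{[k+1]}$. The delicate bookkeeping is to verify that this bound is strictly smaller than the pole excess, even in the boundary case $k+1 = n$.

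\emph{Part (2).} I would use the definition $Q_k(\bz) = \emu_{\bipi_\dist}[\widetilde{CX}^{[k+1]}_\ff,\flag_k^*(\symdot)]$ as the equivariant Poincar\'e dual of the tangent cone of $\widetilde{CX}^{[k+1]}_\ff = \overline{P_{k,n}\cdot p_k}$ at the distinguished fixed point. The ambient tangent space decomposes into $T$-weight directions of weights $\{z_\tau - z_l : \tau \le l,\ \tau \neq [l]\}$. Using the explicit parametrisation \eqref{sgamma} of the orbit and the action of one-parameter subgroups of $P_{k,n}$, I would show that the tangent cone sits inside the coordinate subspace on which the coordinates of weight $z_\tau - z_l$ with $|\tau|\ge 3$ and $l \ge \um\tau$ vanish: these directions encode higher-order deformations which are forbidden by the polynomial reparametrisation structure of $\jetreg 11$ (Remark \ref{naturalembedding}). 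By the degree estimate \eqref{degree} combined with the containment property of the equivariant dual, this forces $(z_\tau - z_l) \mid Q_k(\bz)$ for all such pairs, and cancelling these factors against the matching denominator factors of \eqref{intnumberoneandhalf} produces \eqref{intnumberthree}.

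\emph{Main obstacle.} The hardest step is expected to be the tight pole-count in the total-degree-zero regime of part (1): the inequality barely closes and requires exploiting the precise shape of the substituted polynomial $\alpha(\theta, z_{\pi_1},\ldots,z_{\pi_k})$ rather than only its top-degree component, something not required in the Thom polynomial setting of \cite{bsz}.
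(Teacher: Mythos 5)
Your overall strategy (degree counting for iterated residues, cancellation of denominator factors against the Vandermonde and against divisors of $Q_\bipi$) is the right one and matches the paper's, but there is a genuine gap in part (1): the direction of your induction. You take $l$ to be the \emph{smallest} index with $\pi_l\neq[l]$ and propose to take the $z_l$-residue first, citing the normal-crossing remark after Proposition \ref{abtoresidue}. That remark applies only to \eqref{flagres}, whose denominator factors $\lambda_i-z_l$ each involve a single $z$-variable; in \eqref{intnumberone} the factors $z_\tau-z_{\pi_l}$ mix several $z$-variables, so reordering the residues is not justified. The rigorous substitute is the second option of Proposition \ref{vanishprop}, whose hypothesis $\deg(q;l)=\lead(q;l)$ demands that \emph{no} surviving denominator factor contain $z_l$ together with some $z_m$, $m>l$. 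The paper achieves this by taking $l$ to be the \emph{largest} bad index, so that $\bipi=(\pi_1,\ldots,\pi_l,[l+1],\ldots,[k])$: then every factor $z_\tau-z_s$ with $l\in\tau$ and $s>l$ either cancels against the Vandermonde or divides $Q_\bipi$ by \cite{bsz} Proposition 7.4 — and that proposition is stated precisely for sequences of this shape, so it is not available for your choice of $l$.

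The second consequence of your choice is the treatment of $\alpha$. With $l$ the largest bad index, admissibility forces $l\notin\pi_j$ for every $j$ (since $l\in\pi_j$ and $\Sigma\pi_j\le j$ would force $j\ge l$, hence $\pi_j=[j]$ for $j>l$ excludes it, and $j=l$ would give $\pi_l=[l]$); hence $\alpha(\theta,z_{\pi_1},\ldots,z_{\pi_k})$ does not involve $z_l$ at all, i.e.\ $\deg(p_2;l)=0$. This is what makes the tight count $\deg(p;l)=k<n+1=\deg(q;l)$ close under $k+1\le n$. With your smallest bad index, $z_l$ can genuinely occur in the substituted $\alpha$ (whenever $l\in\pi_j$ for some $j>l$), and your proposed bound on its $z_l$-degree by $r(k+1)$ is neither justified (the theorem allows an arbitrary bi-symmetric polynomial $\alpha$) nor strong enough to beat the pole excess. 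So the "delicate bookkeeping" you flag as the main obstacle is in fact resolved by reversing the induction, not by estimating the degree of $\alpha$. For part (2), the divisibility of $Q_k=Q_{([1],\ldots,[k])}$ by $z_\tau-z_l$ for $|\tau|\ge3$ is imported from \cite{bsz} (cf.\ Remark \ref{remarkq} and \cite{bsz} Theorem 6.16); your geometric sketch of the tangent cone lying in a coordinate subspace is the right idea but the phrase about "higher-order deformations forbidden by the reparametrisation structure" would need to be replaced by an actual computation of the $B_k$-orbit closure in $\Hom(\CC^k,\sym^2\CC^k)$.
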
 

\begin{remark}\label{remarkq}
\begin{enumerate}
\item 
The geometric meaning of $Q_k(\bz)$ in \eqref{intnumberthree} is the following, see also \cite{bsz} Theorem 6.16. Let $T_k\subset B_k\subset \GL(k)$ be the subgroups of invertible
  diagonal and upper-triangular matrices, respectively; denote the
  diagonal weights of $T_k$ by $z_1, \ldots, z_k$.  Consider the $\GL(k)$-module of 3-tensors $\Hom(\CC^k,\sym^2\CC^k)$; identifying the
  weight-$(z_m+z_r-z_l)$ symbols $q^{mr}_l$ and $q^{rm}_l$, we can
  write a basis for this space as follows:
\[ \Hom(\CC^k,\sym^2\CC^k)=\bigoplus \CC q^{mr}_l,\;  1\leq m,r,l \leq k.
\]
Consider the point $\epsilon=\sum_{m=1}^k\sum_{r=1}^{k-m}q_{mr}^{m+r}$
in  the $B_k$-invariant subspace
\begin{equation*}
  \label{nhmodule}
    N_k = \bigoplus_{1\leq m+r\leq l\leq k} \CC q^{mr}_l\subset
\Hom(\CC^k,\sym^2\CC^k).
\end{equation*}
Set the notation $\OO_k$ for the orbit closure
$\overline{B_k\epsilon}\subset N_k$, then $Q_k(\bz)$ is the $T_k$-equivariant
Poincar\'e dual $Q_k(\bz) = \epd{\OO_k,N_k}_{T_k}$,
which is a homogeneous polynomial of degree
$\dim(N_k)-\dim(\OO_k).$. For small $k$ these polynomials are the following (see \cite{bsz} $\S7$):
\[Q_2=Q_3=1, Q_4=2z_1+z_2-z_4\]
\[Q_5=(2z_1+z_2-z_5)(2z_1^2 +3z_1z_2-2z_1z_5+2z_2z_3-z_2z_4-z_2z_5-z_3z_4+z_4z_5).\] 
\item To understand the significance of this vanishing theorem we note that while the fixed point set $\Bipi$ on $\flag_k^*(\symdot)$ is well understood, it is not clear which of these fixed points sit in $X_{\mathbf{f}}$. But we have enough information to prove that none of those fixed points in $X_{\mathbf{f}}$ contribute to the iterated residue except for the distinguised fixed point $\bipi_\dist=([1],[2],\ldots, [k])$.
\item The Residue Vanishing Theorem is valid under the condition $k+1\le n$ which is slightly stronger than the condition $k\le n$ we worked with so far and which guaranteed the existence of $\widetilde{CX}^{[k+1]}_p$. We will remedy this condition in \S\ref{sec:extendtherange}.
\end{enumerate}
\end{remark}
\begin{remark}\label{remark:resolution}
Remark \ref{remark:pullback} for singular varieties and ordinary compactly supported differential forms holds for compactly supported equivariant forms as follows. Let $T$ be a complex torus and $f:M\to N$ be a 
smooth proper $T$-equivariant map between smooth quasiprojective
varieties. Now assume that $X\subset M$ and $Y\subset N$ are possibly
singular $T$-invariant closed subvarieties, such that $f$ restricted
to $X$ is a birational map from $X$ to $Y$. Next, let $\mu$ be an
equivariantly closed differential form on $N$ with values in
polynomials on $\mathfrak{t}$. Then the integral of $\mu$ on the smooth part of
$Y$ is absolutely convergent; we denote this by $\int_Y\mu$. With this
convention we again have
\begin{equation}
  \label{degsmooth}
\int_X   f^*\mu= \int_Y\mu,
\end{equation}
and we can define integrals of equivariant forms on singular quasi-projective varieties simply by passing to any partial equivariant resolution or equivalently to integration over the smooth locus. In particular, applying this for the partial resolution $\rho: \widetilde{CX}^{[k+1]}_p \to \overline{CX}^{[k+1]}_p$ we get 
\[\int_{\overline{CX}^{[k+1]}_p}\alpha=\int_{\widetilde{CX}^{[k+1]}_p}\rho^* \alpha\]
for any $\alpha \in \Omega^*(\overline{CX}^{[k+1]}_p)$ closed compactly supported differential form.
\end{remark}

\subsection{The vanishing of residues}
\label{subsec:vanres}

In this subsection following \cite{bsz} $\S6.2$ we describe the conditions under which iterated
residues  of the type appearing in the sum in \eqref{intnumberone}
vanish and we prove Theorem \ref{vanishtheorem}.

We start with the 1-dimensional case, where the residue at infinity
is defined by \eqref{defresinf} with $d=1$. By bounding the integral
representation along a contour $|z|=R$ with $R$ large, one can
easily prove
\begin{lemma}\label{1lemma}
  Let $p(z),q(z)$ be polynomials of one variable. Then
\[\res_{z=\infty}\frac{p(z)\,dz}{q(z)}=0\quad\text{if }\deg(p(z))+1<\deg(q).
\]
\end{lemma}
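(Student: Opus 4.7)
My plan is to use the contour integral definition of the residue at infinity directly, combined with a standard growth estimate on a large circle. The substitution $w = 1/z$ gives an equivalent and perhaps cleaner approach which I will outline as an alternative.

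First, by the definition \eqref{defresinf} applied with $k=1$, we have
\[
\res_{z=\infty}\frac{p(z)\,dz}{q(z)} = \frac{1}{2\pi i}\int_{|z|=R}\frac{p(z)\,dz}{q(z)}
\]
for any $R$ large enough that all zeros of $q$ lie in $|z|<R$. I would then estimate the integrand on the contour: writing $m = \deg p$ and $n = \deg q$, there is a constant $C>0$ so that $|p(z)/q(z)| \le C |z|^{m-n}$ for $|z|$ large. Multiplying by the contour length $2\pi R$ gives the uniform bound
\[
\left| \frac{1}{2\pi i}\int_{|z|=R}\frac{p(z)\,dz}{q(z)} \right| \;\le\; C\,R^{m-n+1}.
\]
Under the hypothesis $m+1 < n$, the exponent $m-n+1$ is strictly negative, so the right hand side tends to $0$ as $R \to \infty$. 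Since the left hand side is independent of $R$ (once $R$ is large enough to enclose every pole of the meromorphic integrand), the residue must vanish.

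An alternative I would sketch for completeness uses the change of variables $w = 1/z$, under which $dz = -dw/w^2$ and the residue at infinity transforms to (minus) a residue at $w=0$. A direct computation gives
\[
\frac{p(z)\,dz}{q(z)} \;=\; -\,w^{n-m-2}\,\frac{\tilde p(w)}{\tilde q(w)}\,dw,
\]
where $\tilde p, \tilde q$ are polynomials with $\tilde q(0)\neq 0$. The hypothesis $m+1 < n$ forces $n-m-2 \ge 0$, so the form is holomorphic at $w=0$ and the residue vanishes. There is no real obstacle here: the only subtlety is ensuring that the contour bound is uniform in $R$, which follows immediately from the asymptotic $p(z)/q(z) = O(z^{m-n})$ at infinity.
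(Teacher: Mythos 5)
Your argument is correct and is exactly the approach the paper indicates (the paper only sketches the proof by saying one bounds the integral representation along a contour $|z|=R$ for $R$ large). Both your contour estimate $C\,R^{m-n+1}\to 0$ and your alternative via $w=1/z$ are sound, and the key point---that the integral is independent of $R$ once all zeros of $q$ are enclosed---is handled properly.
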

Consider now the multidimensional situation. Let $p(\bz),q(\bz)$ be
polynomials in the $k$ variables $z_1\ldots z_k$, and assume that
$q(\bz)$ is the product of linear factors $q=\prod_{i=1}^N L_i$, as
in \eqref{intnumberone}. We continue to use the notation $\dbz=dz_1\dots dz_k$.
We would like to formulate conditions under which the iterated
residue
\begin{equation}
  \label{ires}
\ires\frac{p(\bz)\,\dbz}{q(\bz)}
\end{equation}
vanishes. Introduce the following notation:
\begin{itemize}\label{notations}
\item For a set of indices $S\subset\{1\ldots k\}$, denote by $\deg(p(\bz);S)$
  the degree of the one-variable polynomial $p_S(t)$ obtained from $p$
  via the substitution $z_m\to
  \begin{cases}
t\text{ if }m\in S,\\ 1\text{ if }m\notin S.
  \end{cases}$. When $p(\bz)$ is the product of linear forms, $\deg(p(\bz);S)$ is the number of terms with nonzero coefficients in front of at least one of $z_s$ for $s\in S$.
\item For a nonzero linear form $L=a_0+a_1z_1+\ldots +a_kz_k$,
  denote by $\coeff(L,z_l)=a_i$ the coefficient in front of $z_i$;
\item finally, for $1\leq m\leq k$, set
  \[\lead(q(\bz);m)=\#\{i;\;\max\{l;\;\coeff(L_i,z_l)\neq0\}=m\},\]
which is the number of those factors $L_i$ in which the coefficient
of $z_m$ does not vanish, but the coefficients of $z_{m+1},\ldots,
z_k$ are $0$.
\end{itemize}
We can group the $N$ linear factors of $q(\bz)$ according to the
nonvanishing coefficient with the largest index; in particular, for
$1\leq m\leq k$ we have
\[   \deg(q(\bz);m)\geq\lead(q(\bz);m),\, \text{ and } \sum_{m=1}^k\lead(q(\bz);m)=N.
\]
 \begin{proposition}[\cite{bsz} Proposition 6.3]
  \label{vanishprop}
Let $p(\bz)$ and $q(\bz)$ be polynomials in the variables $z_1\ldots 
z_k$, and assume that $q(\bz)$ is a product of linear factors:
$q(\bz)=\prod_{i=1}^NL_i$; set $\dbz=dz_1\dots dz_k$. Then
\[ \ires\frac{p(\bz)\dbz}{q(\bz)} = 0
\]
if for some $l\leq k$, either of the following two options hold:
\begin{itemize}
\item $\deg(p(\bz);k,k-1,\dots,l)+k-l+1<\deg(q(\bz);k,k-1,\dots,l),$
\\ or
\item  $\deg(p(\bz);l)+1<\deg(q(\bz);l)=\lead(q(\bz);l)$.
\end{itemize}
\end{proposition}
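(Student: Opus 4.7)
The plan is to derive both vanishing criteria from the one-variable Lemma \ref{1lemma}, via two quite different reductions: a structural decomposition of the denominator for the second condition, and a uniform scaling argument for the first.

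For the second condition, the hypothesis $\deg(q;l)=\lead(q;l)$ forces every linear factor of $q$ containing $z_l$ to have $z_l$ as its highest-index variable. This means we may split $q = q_{<l}\cdot q_l\cdot q_{>l}$, where $q_{<l}$ involves only $z_1,\ldots,z_{l-1}$, where $q_l$ involves $z_1,\ldots,z_l$ with $z_l$-degree exactly $\lead(q;l)$ (and no $z_m$ for $m>l$), and where $q_{>l}$ is free of $z_l$. I would perform the inner iterated residues $\res_{z_k=\infty}\cdots\res_{z_{l+1}=\infty}$ first: since $q_{<l}$ and $q_l$ do not involve $z_{l+1},\ldots,z_k$, these residues act only on $p/q_{>l}$ and produce a function $R(z_1,\ldots,z_l)$. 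Because the algorithmic Laurent expansion \eqref{omegaexp} of $1/q_{>l}$ contains no $z_l$ at all, the $z_l$-degree of $R$ is bounded by $\deg_{z_l}(p) = \deg(p;l)$. The remaining residue $\res_{z_l=\infty}[R/(q_lq_{<l})]\,dz_l$ then falls directly under Lemma \ref{1lemma}: its denominator has $z_l$-degree $\lead(q;l)$ and its numerator has $z_l$-degree at most $\deg(p;l)<\lead(q;l)-1$, so this one-variable residue vanishes, and the outer residues then annihilate zero.

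For the first condition, I would apply the uniform scaling $z_i = R\,w_i$ for $l\le i\le k$, with $R$ large and $|w_i|$ of order $1$. Under this scaling, the numerator $p$ becomes a polynomial in $R$ of degree $\deg(p;l,\ldots,k)$; the denominator $q$, a product of linear forms (each contributing one power of $R$ exactly when it involves some $z_i$ with $i\ge l$), becomes a polynomial in $R$ of degree $\deg(q;l,\ldots,k)$; and the holomorphic volume form $dz_l\wedge\cdots\wedge dz_k$ contributes an explicit factor of $R^{k-l+1}$. The combined $R$-exponent of the integrand is $\deg(p;l,\ldots,k)-\deg(q;l,\ldots,k)+(k-l+1)$, which is strictly negative under the hypothesis. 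The contour integrals over the compact tori $|w_i|$ constant are uniformly bounded in $R$, so the scaled integrand tends to zero as $R\to\infty$, and the iterated residue (being an algebraic invariant of the coefficients, independent of the choice of sufficiently large radii) must vanish.

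The main technical obstacle is the compatibility of the uniform-radius scaling with the nested-radius definition of the iterated residue in the first case. This is resolved by the algorithmic expansion \eqref{omegaexp}, which shows that the iterated residue is a polynomial in the coefficients of the linear forms $L_i$, independent of the specific large radii chosen; the nested condition $R_1\ll\cdots\ll R_k$ is needed only to secure the correct expansion of each $1/\omega_i$ about $z_{q(i)}=\infty$, but the resulting value is purely combinatorial. A parallel care is needed in Case 2 to check that the intermediate $R(z_1,\ldots,z_l)$ is honestly a polynomial in $z_l$ of the claimed degree, which follows from the same expansion rule applied only to the variables $z_{l+1},\ldots,z_k$.
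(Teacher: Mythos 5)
Your statement is one the paper does not actually prove: it is quoted verbatim from \cite{bsz} (Proposition 6.3) and used as a black box, so there is no in-paper argument to compare against line by line. Your reconstruction is correct and follows essentially the same two-pronged strategy as the cited source: for the second option, the hypothesis $\deg(q;l)=\lead(q;l)$ (condition \eqref{op2cond}) forces every factor containing $z_l$ to have $z_l$ as its leading variable, so $z_l$ never enters the expansions \eqref{omegaexp} of the remaining factors; your factoring of $q_{<l}q_l$ out of the inner residues and the one-variable Lemma \ref{1lemma} applied in $z_l$ (with $z_1,\dots,z_{l-1}$ frozen on their contours, so numerator and denominator really are one-variable polynomials with the stated degree bounds) is exactly the standard degree count. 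For the first option, the simultaneous rescaling of $z_l,\dots,z_k$ plus radius-independence of the iterated residue is also the intended estimate. The only place where you are slightly too casual is the claim that the scaled contour integrals are ``uniformly bounded in $R$'' and that nestedness of the radii is needed ``only'' to fix the expansions: to bound the integrand you also need each linear factor involving a scaled variable to satisfy $|L_i|\ge cR$ on the product torus, and this requires the fixed ratios $\rho_l\ll\dots\ll\rho_k$ (and their separation from the inner radii) so that the leading term $a_i^{q(i)}z_{q(i)}$ dominates -- the same mechanism that makes the nested contours avoid the poles in the first place. That is a one-line fix, not a gap, and with it your argument is complete.
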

Note that for the second option, the equality
$\deg(q(\bz);l)=\lead(q(\bz);l)$ means that
  \begin{equation}
    \label{op2cond}
\text{ for each }i=1\ldots N\text{ and }m>l,\,
\coeff(L_i,z_{l})\neq0\text{ implies }\coeff(L_i,z_{m})=0.
  \end{equation}

We are ready to proof the Residue Vanishing Theorem. Recall that our goal is to show that all the terms of the sum in \eqref{intnumberone} vanish except for the one corresponding to
$\bipi_\dist=([1]\ldots  [k])$. The plan is to apply Proposition \ref{vanishprop} in stages to show that the itrated residue vanishes unless $z_i=[i]$ holds, starting with $i=k$ and going backwards. 

Fix a sequence $\bipi=(\pi_1,\dots,\pi_k)\in\Bipi$, and consider the
iterated residue corresponding to it on the right hand side of
\eqref{intnumberone}. The expression under the residue is the product of
two fractions:
\[\frac{p(\bz)}{q(\bz)}=\frac{p_1(\bz)}{q_1(\bz)}\cdot\frac{p_2(\bz)}{q_2(\bz)},\]
where
\begin{equation}
  \label{pq}
\frac{p_1(\bz)}{q_1(\bz)}=\frac{
Q_\bipi(\bz)\,\prod_{m<l}(z_m-z_l) }{
\prod_{l=1}^k\prod_{\um\tau\leq
l}^{\tau\neq\pi_1\ldots \pi_l}(z_\tau-z_{\pi_{l}})}\text{ and
  }
\frac{p_2(\bz)}{q_2(\bz)} = \frac{
\alpha(\theta_1,\ldots, \theta_r, z_{\pi_1},\ldots ,z_{\pi_k})} {
\prod_{l=1}^k\prod_{i=1}^n(\lambda_i-z_l)}.
\end{equation}

Note that $p(\bz)$ is a polynomial, while $q(\bz)$ is a product of
linear forms.  As a first step we show that if $\pi_k \neq [k]$, then already the
first residue in the corresponding term on the right hand side of
\eqref{intnumberone} -- the one with respect to $z_k$ -- vanishes.
Indeed, if $\pi_k\neq[k]$, then $\deg(q_2(\bz);k)=n$, while
$z_k$ does not appear in $p_2(\bz)$. On the other hand, $\deg(q_1(\bz);k)=1$, because the only term which contains $z_k$ is the one corresponding to $l=k, \tau=[k]\neq \pi_k$. By \eqref{degree} $\deg(Q_\bipi(\bz),k)\le 1$ holds so 
\begin{equation}
\deg(p_1(\bz)p_2(\bz);k)=k \text{ and } \deg(q_1(\bz)q_2(\bz);k)=n+1
\end{equation}
and $k \le n-1$, so $\deg(p(\bz))\le \deg(q(\bz))+2$ holds and we can apply Lemma \ref{1lemma}.

We can thus assume that $\pi_k=[k]$, and proceed to the next step and take the residue 
with respect to $z_{k-1}$. If $\pi_{k-1}\neq[k-1]$ then 
\begin{equation}\label{k-1}
\deg(q_2(\bz),k-1)=\lead(q_2(\bz),k-1)=n, \deg(p_2(\bz);k-1)=0.
\end{equation}
In $q_1$ the linear terms containing $z_{k-1}$ are
\begin{equation}\label{termskminusone}
z_{k-1}-z_k , z_1+z_{k-1}-z_k, z_{k-1}-z_{\pi_{k-1}}
\end{equation}

The first term here cancels with the identical term in the Vandermonde in $p_1$. The second term divides $Q_{\bipi}$, according to the following proposition from \cite{bsz} applied for $l=k-1$:
\begin{proposition}[\cite{bsz}, Proposition 7.4]\label{divisible}
Let $l\geq1$, and let $\bipi$ be an admissible sequence of
  partitions of the form $\bipi=(\pi_1,\ldots ,\pi_l,[l+1],\ldots, [k])$, where $\pi_l\neq[l]$. Then
  for $m>l$, and every partition $\tau$ such that
  $l\in\tau$, $\um\tau\leq m$, and $|\tau|>1$, we have
  \begin{equation}
    \label{zdividesq}
 (z_\tau-z_m)|Q_\bipi.
  \end{equation}
\end{proposition}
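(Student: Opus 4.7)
The strategy is to interpret $Q_\bipi(\bz)$ via its definition as the equivariant multiplicity $\emu_\bipi[\widetilde{CX}^{[k+1]}_\ff,\flag_k^*(\symdot)]$, which by \eqref{emult} equals the equivariant Poincar\'e dual $\epd{\tc_\bipi \widetilde{CX}^{[k+1]}_\ff,\,\TT_\bipi\flag_k^*(\symdot)}$ of the $T$-invariant tangent cone at $\bipi$. By the additivity property of equivariant Poincar\'e duals together with \eqref{mdegformula}, it suffices to show that every top-dimensional coordinate subspace appearing in a Gr\"obner degeneration of this tangent cone is contained in the coordinate hyperplane cut out by the vanishing of the tangent direction of weight $z_\tau-z_m$.

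I first set up affine coordinates on $\flag_k^*(\symdot)$ near $\bipi$. The tangent space $\TT_\bipi\flag_k^*(\symdot)$ decomposes under $T_k$ into weight lines indexed by admissible pairs $(l',\sigma)$ with $\um\sigma\le l'$ and $\sigma\notin\{\pi_1,\dots,\pi_{l'}\}$, of weight $z_\sigma-z_{\pi_{l'}}$; let $y_{l',\sigma}$ denote the associated coordinate. The hypotheses $l\in\tau$ and $|\tau|>1$ imply $\tau\neq[j]$ for all $j$, hence $\tau\notin\{\pi_{l+1},\dots,\pi_m\}$, so under the mild extra check that $\tau\notin\{\pi_1,\dots,\pi_l\}$ (else the factor $z_\tau-z_m$ appears vacuously in the denominator of \eqref{intnumberone}), the coordinate $y_{m,\tau}$ of weight $z_\tau-z_m=z_\tau-z_{\pi_m}$ is a bona fide tangent direction.

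Next I use the test curve parametrisation \eqref{sgamma} to describe the orbit $P_{k,n}\cdot p_k$ locally: it is the $\phi^\flag$-image of upper-triangular tuples $(v_1,\dots,v_k)$ with $v_i\in\mathrm{Span}(e_1,\dots,e_i)$ and nonvanishing $e_i$-coefficient, where $V_{l'}=\mathrm{Span}(w_1,\dots,w_{l'})$ with $w_j=\sum_{\sigma\in\mathcal{P}(j)}v_\sigma$. The coordinate $y_{m,\tau}$ records the $e_\tau$-component of the $m$-th spanning vector of $V_m$ after Gauss elimination against the earlier spanning vectors $w_1,\dots,w_{m-1}$. The crucial observation is that for $\um\tau\le m$ the monomial $v_\tau$ already contributes to $w_{\um\tau}\in V_{\um\tau}\subseteq V_m$; therefore, when one row-reduces $w_1,\dots,w_m$ to obtain $V_m$, the $e_\tau$-component of $w_m$ is eliminated by subtracting a multiple of $w_{\um\tau}$ (if $\um\tau<m$) or is already absorbed into $w_{\um\tau}$ itself (if $\um\tau=m$). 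Passing to the orbit closure at $\bipi$, this elimination yields a relation in the local defining ideal of $\widetilde{CX}^{[k+1]}_\ff$ whose initial form, under any term order refining the $T_k$-weight grading with $z_1\ll\cdots\ll z_k$, is a nonzero scalar multiple of $y_{m,\tau}$. The joint hypothesis $l\in\tau$ and $\pi_l\neq[l]$ is precisely what prevents this relation from being rendered trivial by the structure of the tail $\pi_{l+1}=[l+1],\dots,\pi_k=[k]$ of $\bipi$.

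Since $y_{m,\tau}$ then lies in the initial ideal of the tangent cone, every top-dimensional coordinate subspace in its Gr\"obner degeneration is contained in $\{y_{m,\tau}=0\}$; by \eqref{mdegformula} and additivity, the weight $z_\tau-z_m$ divides every summand in the resulting expression for $Q_\bipi$, and hence divides $Q_\bipi$ itself. The main obstacle is the detailed combinatorial bookkeeping of the Gauss elimination step, specifically verifying that no cancellation of the leading $y_{m,\tau}$-coefficient can occur through the trivial tail of $\bipi$. I expect this to be handled by a double induction on $m-l$ and on the number of parts of $\tau$, with the base case $\um\tau=m$ and $\tau=\{l\}\cup[m-l]$ (tractable by a direct calculation) serving as the anchor.
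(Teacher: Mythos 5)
Your top-level strategy is the right one, and it is essentially how this statement is proved in \cite{bsz} (this paper only quotes the result): interpret $Q_\bipi$ as the equivariant multiplicity $\emu_\bipi[X_\ff,\flag_k^*(\symdot)]$, exhibit an element of the local ideal at $\bipi$ whose linear part is the chart coordinate $y_{m,\tau}$ of weight $z_\tau-z_{\pi_m}=z_\tau-z_m$, deduce that the tangent cone lies in $\{y_{m,\tau}=0\}$, and conclude divisibility from the elimination/product property of equivariant duals. Two small corrections to the framing: your worry that $\tau$ might equal some $\pi_i$ with $i\le l$ is vacuous, since $l\in\tau$ and $|\tau|>1$ force $\um\tau\ge l+1>\um{\pi_i}$; and the relevant initial form is the one with respect to the order-of-vanishing (degree) filtration defining the tangent cone, not ``a term order refining the $T_k$-weight grading with $z_1\ll\cdots\ll z_k$'', which does not define the right object.

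The genuine gap is in the only step that carries the content of the proposition. Row reduction of $w_1,\dots,w_m$ removes the components along the pivot directions $e_{\pi_1},\dots,e_{\pi_{m-1}}$ only; it does not ``eliminate the $e_\tau$-component of $w_m$ by subtracting a multiple of $w_{\um\tau}$''. Indeed $y_{m,\tau}$ does not vanish on the orbit at all; what is true (and must be proved) is that, written in the chart coordinates, it has no linear term. Your stated mechanism never invokes the hypotheses $|\tau|>1$, $l\in\tau$, $\pi_l\neq[l]$, so if it were a valid argument it would prove false statements. Concretely, take $k=3$, $\bipi=([1],[1,1],[3])$, $l=2$, $m=3$, with chart coordinates $y_{2,[2]},\,y_{3,[2]},\,y_{3,[1,2]},\,y_{3,[1,1,1]}$. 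Parametrising the orbit by $v_1=ae_1$, $v_2=be_1+ce_2$, $v_3=de_1+fe_2+ge_3$ and reducing, one finds $y_{2,[2]}=c/a^2$, $y_{3,[2]}=(af-2bc)/(ag)$, $y_{3,[1,2]}=2ac/g$, $y_{3,[1,1,1]}=a^3/g$, so the closure in the chart is the hypersurface $y_{3,[1,2]}=2\,y_{2,[2]}\,y_{3,[1,1,1]}$ and $Q_\bipi=z_1+z_2-z_3$. This confirms the proposition for $\tau=[1,2]$ (the relation has linear part $y_{3,[1,2]}$), but it also shows $y_{3,[2]}$ is unconstrained, so $(z_2-z_3)\nmid Q_\bipi$ --- even though your heuristic (``$v_2$ already contributes to $w_2\subset V_3$, hence its component is eliminated'') applies to $\tau=[2]$ verbatim. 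The actual work --- showing that for a composite $\tau\ni l$ with $\pi_l\neq[l]$ and trivial tail, the $e_\tau$-coefficient of the reduced $m$-th basis vector receives only contributions of order at least two in the chart coordinates, with no cancellation issues --- is exactly the content of \cite{bsz}, Proposition~7.4, and your sketch defers it to an unspecified double induction rather than proving it.
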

Therefore, after cancellation, all linear factors from $q_1(\bz)$ which
have nonzero coefficients in front of both $z_{k-1}$ and $z_k$ vanish, and for the new fraction $\frac{p_1'(\bz)}{q_1'(\bz)}$
\[\deg(q'_1(\bz),k-1)=\lead(q'_1(\bz),k-1)=1.\]
By \eqref{termskminusone} and \eqref{degree} $\deg(Q_\pi,k-1)\le 3$ and therefore after cancellation we have 
\[\deg(p'_1(\bz),k-1)\le k-2+2=k\]
Using \eqref{k-1} we get  
\[\deg(p'_1(\bz)p_2(\bz),k-1)=k \text{ and } \deg(q'_1(\bz)q_2(\bz),k-1)=\lead(q'_1(\bz)q_2(\bz),k-1)=n+1,\]
so we can apply the second option in Proposition \ref{vanishprop} with $l=k-1$ to deduce the vanishing of the residue with respect to $k-1$.

In general, assume that 
\[\bipi=(\pi_1,\pi_2,\ldots ,\pi_l,[l+1],\ldots ,[k]) \text{ where } \pi_l\neq [l],\]
and proceed to the study of the residue with respect to $z_l$. For the second fraction we have again
\begin{equation}\label{ingeneral}
\deg(q_2(\bz),l)=\lead(q_2(\bz),l)=n, \deg(p_2(\bz);l)=0.
\end{equation}
The linear terms containing $z_l$ in $q_1(\bz)$ are 
\begin{eqnarray}
z_l-z_k,z_{l}-z_{k-1},\ldots ,z_{l}-z_{l+1} \label{vandweights} \\
z_\tau-z_s \text{ with } l \in \tau, \tau \neq l, l+1\le s\le k, \um \tau \le s    \label{restweights} \\
z_l-z_{\pi_l} \label{lastweight}
\end{eqnarray}
The weights in \eqref{vandweights} cancel out with the identical terms of the Vandermonde in $p_1(\bz)$ and by Propostition \ref{divisible} $Q_\bipi(\bz)$ is divisible by the weights in \eqref{restweights}. Hence all linear factors with nonzero coefficient in front of $z_l$ and at least one of $z_{l+1},\ldots, z_{k}$ vanish from $q_1(z)$. 
Let again $\frac{p_1'(\bz)}{q_1'(\bz)}$ denote the new fraction arising from $\frac{p_1(\bz)}{q_1(\bz)}$ after these cancellations. Then in $q'_1(\bz)$ only the term \eqref{lastweight} contains $z_l$ and  
\begin{equation}\label{q1general}
\deg(q'_1(\bz),l)=\lead(q'_1(\bz),l)=1.
\end{equation}
In $p'_1(\bz)$ the linear terms which are left from the Vandermonde after cancellation and contain $z_l$ are $z_{l-1}-z_l,\ldots, z_1-z_l$. The reduced $Q'_\pi(\bz)$ which we get after dividing by the terms in \eqref{restweights} is then a polynomial of the remaining weights, and the only remaining weights which contain $z_l$ are 
\[z_l-z_{\pi_l} \text{ and } z_l-z_k,z_l-z_{k-1},\ldots, z_l-z_{l+1}.\] 
Then \eqref{degree} tells us that $\deg(Q_\pi(\bz,l)\le k-l+1$. Therefore
\begin{equation}\label{p1general}
\deg(p'_1(\bz);l)\le (l-1)+(k-l+1)=k.
\end{equation} 
Putting \eqref{q1general} and \eqref{p1general} together we get
\[\deg(p'_1(\bz)p_2(\bz),l)=k \text{ and } \deg(q'_1(\bz)q_2(\bz),l)=\lead(q'_1(\bz)q_2(\bz),k-1)=n+1.\]
Since $k\le n-1$, by applying the second option of Proposition \ref{vanishprop} we arrive at the vanishing of the residue, forcing $\pi_l$ to be $[l]$.

\section{Increasing the number of points and the proof of Theorem \ref{main}}\label{sec:extendtherange}

The Residue Vanishing Theorem provides a closed iterated residue formula for tautological integrals on $\widetilde{CX}^{[k+1]}_p$ in the case when $k+1\le n$, that is, the number of points does not exceed the dimension of $X$. In this section we show how one can drop this very restrictive condition.

Recall that the test curve model in \S\ref{subsec:testcurve} establishes a $\GL(n)$-equivariant isomorphism of quasi-projective varieties  
\[\jetreg 1n/\jetreg 11 \simeq CX^{[k+1]}_p \subset \grass_k(\symdot)\]
between the moduli of $k$-jets of regular germs and the curvilinear locus of the punctual Hilbert scheme sitting in the Grassmannian of $k$-dimensional subspaces in $\symdot$. For punctual Hilbert schemes we can assume without loss of generality that $X=\CC^n$ and $p=0$ and we use the notation 
\[\CHilb^{k+1}_0(\CC^n) \subset \Hilb^{k+1}_0(\CC^n)\]
the curvilinear locus sitting in the punctual Hilbert scheme at the origin and $\overline{\CHilb}^{k+1}_0(\CC^n)$ for its closure, the curvilinear component.

Assume that $k+1>\dim(X)=n$. Fix a basis $\{e_1,\ldots, e_k\}$ of $\CC^k$ and let 
\[\CC_{[n]}=\mathrm{Span}(e_1,\ldots, e_n) \hookrightarrow \CC^{k+1} \text{ and } \CC_{[k+1-n]}=\mathrm{Span}(e_{n+1},\ldots, e_{k+1}) \hookrightarrow \CC^{k+1}\]
denote the subspaces spanned by the first $n$ and last $k+1-n$ basis vectors respectively. These are $T_{k+1}$-equivariant embeddings under the diagonal action of the maximal torus $T_{k+1}\subset \GL(k+1)$ and they induce the $T_{k+1}$-equivariant embedding
\begin{equation}\label{decomposition}
\jetreg 1n \hookrightarrow \jetreg 1{k+1}=\jetreg 1n \oplus \mathrm{Hom}(\CC^{k},\CC_{[k+1-n]}).
\end{equation}
defined via $f \mapsto (f,0)$. Here, by placing $f^{(i)}/i!\in \CC^n$ into the $i$th column we identify $\jetreg 1n$ with $\Hom^{\reg}(\CC^k,\CC^n)$, the set of $k$-by-$n$ matrices with nonzero first column and the decomposition \eqref{decomposition} reads as
\begin{multline}\label{decomposition2}
\jetreg 1{k+1}=\Hom^{\reg}(\CC^k,\CC^{k+1})=\Hom^{\reg}(\CC^k,\CC_{[n]})\oplus \Hom(\CC^k,\CC_{[k+1-n]})=\\
=\jetreg 1n \oplus \Hom(\CC^k,\CC_{[k+1-n]}).
\end{multline}

Moreover, $\jetreg 1n$ is invariant under the reparametrisation group $\jetreg 11$ acting on $\jetreg 1{k+1}$ and this action commutes with the $T_{k+1}$ action resulting a $T_{k+1}$-equivariant embedding 
\[\CHilb^{k+1}_0(\CC^n) \simeq \jetreg 1n/\jetreg 11 \subset \jetreg 1{k+1}/\jetreg 11=\CHilb^{k+1}_0(\CC^{k+1}).\]
This embedding extends to the closures and commutes with the embeddings into the Grassannians resulting the diagram 
\begin{equation*}
\xymatrix{\overline{\CHilb}^{k+1}_0(\CC^n) \ar[r]^-{T_{k+1}-equiv} \ar[d]^{\GL(n)-equiv} & \overline{\CHilb}^{k+1}_0(\CC^{k+1}) \ar[d]^{\GL(k+1)-equiv}\\
\grass_k(\symdot) \ar[r]^-{T_{k+1}-equiv} & \grass_k(\sym^{\le k}\CC^{k+1})} 
\end{equation*}
where the horizontal maps are $T_{k+1}$-equivariant and the vertical embeddings are $\GL(n)$ resp. $\GL(k+1)$-equivariant. 

The decomposition \eqref{decomposition2} induces a $T_{k+1}$-equivariant isomorphism of quasi-projective varieties
\[\Psi^{k+1\to n}: \jetreg 1n/\jetreg 11 \times \mathrm{Hom}(\CC^{k},\CC_{[k+1-n]}) \overset{\simeq}\to \jetreg 1{k+1}/\jetreg 11 \]
\[(f_1 \cdot \jetreg 11,f_2) \mapsto (f_1 \oplus f_2)\cdot \jetreg 11\]
whose inverse on the open chart 
\[J_k^i(1,k+1)/\jetreg 11=\{(f',\ldots, f^{[k]})\cdot \jetreg 11:f'_i\neq 0\}\]
where the $i$th coordinate of $f'$ does not vanish can be given using a canonical slice of the action given by the following
\begin{lemma}
Let $n\ge 2$ and $f=(f',\ldots, f^{[k]}) \in J_k^i(1,k+1)$. The $\jetreg 11$-orbit of $f$ contains a unique point $\tilde{f}=(\tilde{f}',\ldots, \tilde{f}^{[k]})$ such that $\tilde{f}'_i=1$ and $\tilde{f}^{[j]}_i=0$ for $j\ge 2$.      
\end{lemma}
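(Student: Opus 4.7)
The plan is to exploit the explicit matrix description \eqref{jetdiffmatrix} of the $\jetreg 11$-action and show that the $k$ conditions imposed on $\tilde{f}$ translate into a triangular system of polynomial equations in the coefficients $\alpha_1,\ldots,\alpha_k$ of the reparametrisation $\varphi(z)=\alpha_1 z+\ldots +\alpha_k z^k$. The nonvanishing of $f'_i$ will provide the nonzero pivots needed for unique solvability.

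First I would write out the conditions concretely. Setting $g_l=(f^{[l]}(0)/l!)_i$, so that $g_1=f'_i\neq 0$ by hypothesis, the $(i,j)$-entry of the product in \eqref{jetdiffmatrix} gives that the $i$-th component of $(f\circ\varphi)^{[j]}(0)/j!$ equals
\[
\sum_{l=1}^{j} g_l\, p_{l,j}(\alpha),\qquad p_{l,j}(\alpha)=\sum_{a_1+\ldots+a_l=j}\alpha_{a_1}\cdots\alpha_{a_l}.
\]
Thus the desired conditions $\tilde f'_i=1$ and $\tilde f^{[j]}_i=0$ for $j\ge 2$ read
\[
g_1\alpha_1=1,\qquad g_1\alpha_j+\sum_{l=2}^{j} g_l\,p_{l,j}(\alpha)=0\quad(j=2,\ldots,k).
\]

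The crux of the argument is the following elementary combinatorial observation: for $l\ge 2$, every partition $a_1+\ldots+a_l=j$ into positive parts satisfies $a_r\le j-l+1\le j-1$, so $p_{l,j}$ depends only on $\alpha_1,\ldots,\alpha_{j-1}$. Consequently, the variable $\alpha_j$ appears in the $j$-th equation only through $p_{1,j}=\alpha_j$, with coefficient $g_1\neq 0$. This makes the system strictly triangular: the first equation uniquely determines $\alpha_1=1/g_1\neq 0$ (so $\varphi$ does lie in $\jetreg 11$), and for $j\ge 2$ the equation recursively solves as
\[
\alpha_j=-g_1^{-1}\sum_{l=2}^{j} g_l\,p_{l,j}(\alpha_1,\ldots,\alpha_{j-1}),
\]
yielding unique existence of $\alpha_2,\ldots,\alpha_k$.

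There is no real obstacle here; the only subtlety is verifying that $\alpha_j$ truly does not enter $p_{l,j}$ for $l\ge 2$ (which is the combinatorial fact above) and that $\alpha_1\neq 0$ (automatic from $g_1\neq 0$). Uniqueness of $\tilde f$ then follows because distinct solutions $\alpha$ give distinct reparametrisations and the triangular system admits a unique solution. The hypothesis $n\ge 2$ is used only to ensure the ambient jet space $J_k(1,k+1)$ has a coordinate orthogonal to the distinguished direction $i$, so that the normalisation $\tilde f^{[j]}_i=0$ for $j\ge 2$ leaves the other coordinates of $\tilde f^{[j]}$ free.
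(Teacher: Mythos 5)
Your proposal is correct and follows essentially the same route as the paper: both read off the $i$-th coordinate of $f\circ\varphi$ from the matrix in \eqref{jetdiffmatrix} and solve the resulting triangular system inductively, with $\alpha_1=1/f'_i$ and each subsequent $\alpha_j$ determined by the lower-index coefficients because the nonlinear terms $p_{l,j}$ with $l\ge 2$ involve only $\alpha_1,\ldots,\alpha_{j-1}$. Your version is in fact slightly more explicit about this last combinatorial point, which the paper leaves implicit.
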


\begin{proof} 
$J_k^i(1,k+1)$ consists of $k+1$-by-$k$ matrices $(f',\ldots f^{(k)})$ whose $(i,1)$ entry $f'_i$ is nonzero $f'_i\neq 0$. The action of $\jetreg 11$ is right multiplication with the matrix given in \eqref{jetdiffmatrix}. This action multiplies the first column $f'$ with $\alpha_1$ whereas the image of $f^{(j)}$ for $2\le j \le k$ is
\[\alpha_j f'+\sum_{\substack{\tau \in \mathcal{P}(j)\\ |\tau|=s}}\alpha_\tau \cdot f^{(s)}.\] 
We choose the free parameter $\alpha_j$ inductively as $\alpha_1=1/f'_i$ and $\alpha_j=-\frac{1}{f'_i}\sum_{\substack{\tau \in \mathcal{P}(j)\\ |\tau|=s}}\alpha_\tau \cdot f^{(s)}_i$ for $2\le j \le k$
to get the desired form of the matrix. 
\end{proof}

The $T_{k+1}$-equivariant inverse of $\Psi^{k+1\to n}$ on the open chart $J_k^i(1,k+1)/\jetreg 11$ is then given as
\[\Psi_i^{n\to k+1}: J_k^i(1,k+1)/\jetreg 11 \to \jetreg 1n/\jetreg 11 \oplus \mathrm{Hom}(\CC^{k},\CC_{[k+1-n]})\]
\[(f_1 \oplus f_2)\cdot J_k(1,1) \mapsto (f_1 \cdot J_k(1,1),\tilde{f}_2).\]
This $T_{k+1}$-equivariant isomorphism gives us 
\begin{proposition}\label{ntok}
For any $T_{k+1}$-equivariantly closed compactly supported form $\mu$ on \\$\jetreg 1{k+1}/\jetreg 11$ we have
\[\int_{\jetreg 1n/\jetreg 11} \mu=\int_{\jetreg 1{k+1}/\jetreg 11}\mu \cdot \mathrm{Euler}^{T_k}(\mathrm{Hom}(\CC^{k},\CC_{[k+1-n]}))\]
where $\mathrm{Euler}^{T_k}(\mathrm{Hom}(\CC^{k},\CC_{[k+1-n]}))$ is the $T_{k+1}$-equivariant Euler class of $\mathrm{Hom}(\CC^{k},\CC_{[k+1-n]})$.
\end{proposition}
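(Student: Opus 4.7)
The strategy is to recognise Proposition \ref{ntok} as a standard equivariant fibre-integration identity applied to the $T_{k+1}$-equivariant isomorphism $\Psi^{k+1\to n}$ constructed above. Concretely, write $E=\mathrm{Hom}(\CC^k,\CC_{[k+1-n]})$ and use $\Psi^{k+1\to n}$ to identify
\[
\jetreg 1{k+1}/\jetreg 11 \;\simeq\; \jetreg 1n/\jetreg 11 \times E
\]
as $T_{k+1}$-varieties. Under this identification, the natural embedding $\jetreg 1n/\jetreg 11 \hookrightarrow \jetreg 1{k+1}/\jetreg 11$ induced by the decomposition \eqref{decomposition2} coincides with the inclusion $i:\jetreg 1n/\jetreg 11 \hookrightarrow \jetreg 1n/\jetreg 11\times\{0\}$ as the zero section of the trivial $E$-bundle over $\jetreg 1n/\jetreg 11$. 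Hence $\int_{\jetreg 1n/\jetreg 11}\mu$ on the left-hand side is literally the integral of $i^*\mu$ over the zero section.

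The second step is to compute the equivariant pushforward along the projection $\pi:\jetreg 1n/\jetreg 11 \times E \to \jetreg 1n/\jetreg 11$. The action of $T_{k+1}$ on $E$ is linear with weights $\{z_j\}_{j=n+1}^{k+1}$, each occurring with multiplicity $k$; these are nonzero linear forms on $\liet_{k+1}^*$, so the unique $T_{k+1}$-fixed point of $E$ is the origin. Applying the Atiyah--Bott localisation formula (Theorem \ref{abbv}) fibrewise on the vector space $E$ yields
\[
\pi_*\mu \;=\; \frac{i^*\mu}{\mathrm{Euler}^{T_{k+1}}(E)}.
\]
Combined with Fubini in the form $\int_{\jetreg 1{k+1}/\jetreg 11}\mu=\int_{\jetreg 1n/\jetreg 11}\pi_*\mu$, this gives
\[
\int_{\jetreg 1{k+1}/\jetreg 11}\mu \;=\; \frac{1}{\mathrm{Euler}^{T_{k+1}}(E)}\int_{\jetreg 1n/\jetreg 11} i^*\mu,
\]
which on rearrangement is exactly the claimed identity.

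The main technical point to handle carefully is the non-compactness of the quotients and the validity of fibre integration for equivariantly compactly supported forms on a non-proper base. This is however routine: equivariantly compactly supported forms on the total space restrict along the zero section to equivariantly compactly supported forms on the base, and the fibrewise Atiyah--Bott formula for linear actions on a vector space with isolated fixed point zero is an algebraic identity in the fraction field of $S^\bullet \liet_{k+1}^*$, so both sides of the asserted identity are well defined elements of this field. A minor bookkeeping issue is the consistency of the two charts of $\Psi^{k+1\to n}$: the inverse was constructed on each chart $J_k^i(1,k+1)/\jetreg 11$, and one should note that the formulae patch to a global $T_{k+1}$-equivariant isomorphism, which follows at once from the uniqueness statement in the slicing lemma preceding the proposition.
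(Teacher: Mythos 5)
Your argument is correct and is essentially the paper's: both identify $\jetreg 1n/\jetreg 11$ via $\Psi^{k+1\to n}$ as the zero section of the trivial bundle $\jetreg 1{k+1}/\jetreg 11 \to \jetreg 1n/\jetreg 11$ with fibre $\mathrm{Hom}(\CC^{k},\CC_{[k+1-n]})$, and then invoke the standard fact that the dual of the zero section is the equivariant Euler class of the fibre --- the paper packages this as Poincar\'e duality in the Borel construction, while you package it as the Thom isomorphism plus fibrewise pushforward, which is the same fact. (Your parenthetical description of the torus weights on the fibre does not match what the paper later computes, namely $\lambda_j-z_i$ for $1\le i\le k$, $n+1\le j\le k+1$, but this is immaterial to the proof of the proposition itself.)
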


\begin{proof} 
We us the topological definition of equivariant duals, see Remark \ref{remark:topdef} and \cite{fultonnotes,kaz97,eg97} for details. 
We use the shorthand notations $\mathcal{J}_{k+1}=\jetreg 1{k+1}/\jetreg 11$ and $\mathcal{J}_n=\jetreg 1n/\jetreg 11$. The key observation is that $ET_{k+1} \times_{T_k} \mathcal{J}_n$ forms the zero section of the $T_{k+1}$-equivariant bundle $ET_{k+1} \times_{T_{k+1}} \mathcal{J}_{k+1}$ with fibres isomorphic to $\mathrm{Hom}(\CC^{k},\CC_{[k+1-n]})$. The (ordinary) Poincar\'e dual of the zero section is given by the top Chern class of the bundle, that is the $T_{k+1}$-equivariant Euler class of the fibre and therefore 
\[\int_{ET_{k+1} \times_{T_{k+1}} \mathcal{J}_n} \mu=\int_{ET_{k+1} \times_{T_{k+1}} \mathcal{J}_{k+1}}\mu \cdot c^{top}\]
for any compactly supported equivariantly closed $\mu \in H^*(ET_k \times_{T_k} \mathcal{J}_k)$. 
\end{proof}

As a corollary we get the following  
\begin{corollary}[\textbf{Extended Residue Vanishing Theorem}]
Formula \eqref{intnumberthree} remains valid for any $2\le n<k+1$.     
\end{corollary}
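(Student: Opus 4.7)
The plan is to use Proposition \ref{ntok} to reduce the $n<k+1$ case to the $n'=k+1$ case of the Residue Vanishing Theorem, in which the ambient dimension meets the hypothesis $k+1\le n'$ of Theorem \ref{vanishtheorem} with equality; the surplus Euler-class factor produced by Proposition \ref{ntok} will then cancel precisely the extra linear forms in the denominator of formula \eqref{intnumberthree} for ambient dimension $k+1$, producing the asserted formula for dimension $n$.

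Concretely, I would apply Proposition \ref{ntok} with $\mu=\alpha$ to write
\[\int_{\mathcal{J}_n}\alpha \;=\; \int_{\mathcal{J}_{k+1}}\alpha\cdot e,\qquad e=\mathrm{Euler}^{T_{k+1}}\bigl(\Hom(\CC^k,\CC_{[k+1-n]})\bigr),\]
where $\mathcal{J}_n=\jetreg 1n/\jetreg 11$ and $\mathcal{J}_{k+1}=\jetreg 1{k+1}/\jetreg 11$. Since the ambient dimension on the right is now $k+1$, Theorem \ref{vanishtheorem} applies with $n'=k+1$, and formula \eqref{intnumberthree} for $\alpha\cdot e$ gives
\[\int_{\mathcal{J}_{k+1}}\alpha\cdot e \;=\; \sires \frac{Q_k(\bz)\,\prod_{m<l}(z_m-z_l)\,(\alpha\cdot e)_{\ff}(\theta,\bz)\dbz}{\prod_{m+r\le l\le k}(z_m+z_r-z_l)\prod_{l=1}^k\prod_{i=1}^{k+1}(\lambda_i-z_l)}.\]
At the distinguished fixed point $\bipi_\dist=([1],\dots,[k])$ the pullback $\alpha_{\ff}$ is just $\alpha(\theta,\bz)$, while the key claim is that $e_{\ff}=\prod_{s=n+1}^{k+1}\prod_{l=1}^k(\lambda_s-z_l)$. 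Granting this, the Euler-class factor cancels the sub-product $\prod_{l=1}^k\prod_{i=n+1}^{k+1}(\lambda_i-z_l)$ of the denominator, leaving exactly $\prod_{l=1}^k\prod_{i=1}^n(\lambda_i-z_l)$, which is the denominator of formula \eqref{intnumberthree} for ambient dimension $n$, completing the proof.

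The principal obstacle is justifying the evaluation of $e_{\ff}$. Viewed naively as the $T_{k+1}$-equivariant Euler class of $\Hom(\CC^k,\CC_{[k+1-n]})$ with trivial action on the source, one would get merely $\prod_{s=n+1}^{k+1}\lambda_s^k$, a constant in $\bz$ that cannot produce the required cancellation. The correct weights surface once one traces the bundle structure in the proof of Proposition \ref{ntok}: after the $\jetreg 11$-quotient, the source $\CC^k$ carries weights that, at the distinguished fixed point of $\widetilde{CX}^{[k+1]}_p$ in $\CC^{k+1}$, localise to the Chern roots $z_l$ of the tautological rank-$k$ bundle $\cale$. Unraveling this via the $T_{k+1}$-equivariant isomorphism $\Psi^{k+1\to n}$ and the explicit parametrisation \eqref{sgamma} of $\cale$ is the one nontrivial point; once $e_{\ff}=\prod_{s,l}(\lambda_s-z_l)$ is established, the Extended Residue Vanishing Theorem follows by the formal cancellation described above.
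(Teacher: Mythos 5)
Your proposal follows essentially the same route as the paper: embed $\jetreg 1n/\jetreg 11$ into $\jetreg 1{k+1}/\jetreg 11$, apply the Residue Vanishing Theorem in ambient dimension $k+1$, and cancel the surplus denominator factors against the equivariant Euler class from Proposition \ref{ntok}, whose weights at the distinguished pole are $\lambda_j-z_i$ (for $n+1\le j\le k+1$, $1\le i\le k$) rather than the naive $\lambda_j$ — exactly the point the paper makes by noting that the weight of $f^{[i]}_j$ over the flag $\ff_\sigma$ is $\lambda_{\sigma(j)}-\lambda_{\sigma(i)}$, which localises to $\lambda_j-z_i$. The one step you flag as unresolved is precisely what the paper supplies, so the argument is correct and complete modulo that standard weight computation.
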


\begin{proof}
The weights of the $T_{k+1}$ action on $\mathrm{Hom}(\CC^{k},\CC_{[k+1-n]})$ in Proposition \ref{ntok} are the weights of the $T_{k+1}$ action on $f^{[i]}_j$ for $1\le i \le k$ and $n+1 \le j \le k+1$. The embedding $\phi^\flag:\jetreg 1{k+1}/\jetreg 11 \hookrightarrow \flag_k(\sym^{\le k}\CC^{k+1})$ is $T_{k+1}$-equivariant, and over the flag $\ff_\s$ the weight of $f^{[i]}_j$ is $\lambda_{\s(i)}-\lambda_{\s(j)}$. In the iterated residue formula of Proposition \ref{propflag} we write $\lambda_i-z_j$ for this weight and therefore the $T_{k+1}$-equivariant Euler class transforms into 
\[\mathrm{Euler}^{T_{k+1}}_{\bz}(\mathrm{Hom}(\CC^{k},\CC_{[k+1-n]}^{\bz}))=\prod_{i=1}^k \prod_{j=n+1}^k (\lambda_j-z_i)\]
over the flag $\ff_\s$ corresponding to an iterated pole $\bz=(z_1,\ldots z_k)$. If $\alpha=\alpha(\theta_1,\ldots, \theta_r,\eta_1,\ldots, \eta_k)$ is a bi-symmetric polynomial in the Chern roots $\theta_i$ of the pull-back of $F$ over $\widetilde{CX}^{[k+1]}_p=\CHilb^{k+1}_0(\CC^n) \subset \flag_k(\symdot)$ and the Chern roots $\eta_j$ of the tautological rank $k$ bundle $\cale$, then $\alpha$ is the restriction of a closed form on $\flag_k(\Sym^{\le k}\CC^{k+1})$ and in particular it is a restriction of a form on $\CHilb^{k+1}_0(\CC^{k+1})$. Therefore Remark \ref{remark:resolution}, Proposition \ref{ntok} and Theorem \ref{vanishtheorem} tell us that 
\begin{multline}\nonumber
\int_{\overline{CX}^{[k+1]}_p}\alpha=\sires \frac{
Q_k(\bz)\,\prod_{m<l}(z_m-z_l) \alpha(\theta,\bz) \,\dbz}{
\prod_{m+r \le l \le k}
(z_m+z_r-z_l)  \prod_{l=1}^k\prod_{i=1}^k(\lambda_i-z_l)}  \cdot \prod_{i=1}^k \prod_{j=n+1}^k (\lambda_j-z_i)=\\
=\sires \frac{
Q_k(\bz)\,\prod_{m<l}(z_m-z_l) \alpha(\theta,\bz)}{
\prod_{m+r \le l \le k}
(z_m+z_r-z_l)  \prod_{l=1}^k\prod_{i=1}^n(\lambda_i-z_l)} \,\dbz.
\end{multline}
\end{proof}

\subsection{Proof of Theorem \ref{main} and final remarks}\label{sec:final}
The weights $\lambda_1,\ldots, \lambda_n$ are the Chern roots of $T_p^*X$ and therefore $-\lambda_1,\ldots, -\lambda_n$ are the weights on $T_pX$. Theorem \ref{main} follows from the Residue Vanishing Theorem by substituting 
\[\frac{1}{\prod_{i=1}^n(\l_i-z_j)}=\frac{(-1)^n}{z_j^nc(1/z_j)}=(-1)^n\frac{s_X(1/z_j)}{z_j^n}\] 
If we give the $z_i$'s and $\theta_j$'s degree $1$ then the total degree of the rational expression 
\[\frac{(-1)^{nk}\prod_{i<j}(z_i-z_j)Q_k(z)M(c_i(z_i+\theta_j,\theta_j))}{\prod_{i+j\le l\le k}(z_i+z_j-z_l)(z_1\ldots z_k)^n}\]
in the formula is $n-k$.

The Chern class $c_i(z_i+\theta_j,\theta_j)$ is the coefficient of $t^i$ in 
\[c(F^{[k+1]})(t)=\prod_{j=1}^r(1+\theta_jt)\prod_{i=1}^k\prod_{j=1}^r(1+z_it+\theta_jt),\] 
that is, the $i$th Chern class of the bundle with formal Chern roots $\theta_j,z_i+\theta_j$. For example
\[c_1(z_i+\theta_j,\theta_j)=(k+1)\sum_{j=1}^r \theta_j+r\sum_{i=1}^k z_i,\] 
and in general $c_i(z_i+\theta_j,\theta_j)$ is a degree $i$ polynomial of the form 
\[c_i(z_i+\theta_j,\theta_j)=A_ic_{i}(\bz)+A_{i-1}c_{i-1}(\bz)+\ldots +A_0\]
where $c_j(\bz)$ is the $j$th elementary symmetric polynomial in $z_1,\ldots, z_k$ and $A_j$ is a degree
$n-j$ symmetric polynomial in $\theta_1,\ldots, \theta_r$.


\begin{thebibliography}{}


\bibitem{arnold}  Arnold, V. I., Goryunov, V. V., Lyashko, O. V.,Vasilliev, V. A.: Singularity theory I. Dynamical systems VI, Encyclopaedia Math. Sci., Springer-Verlag, Berlin (1998)

\bibitem{ab} Atiyah M., Bott, R.: The moment map and equivariant cohomology. Topology 23(1), 1-28 (1984)

\bibitem{b} B\'erczi, G.: Moduli of map germs, Thom polynomials and the Green-Griffiths conjecture. Contributions to Algebraic Geometry, edited by P. Pragacz, EMS 141-168 (2012)  

\bibitem{thesis} B\'erczi, G.: Multidegrees of singularities and non-reductive quotients. PhD Thesis, E\"otv\"os University Budapest (2008)

\bibitem{BFR} B\'erczi, G., Feh\'er, L.M., Rim\'anyi, R.: Expressions for resultants coming from the global theory
of singularities. Topics in Algebraic and Noncommutative Geometry, Contemporary Mathematics 324, 63-69 (2003)

\bibitem{b2} B\'erczi, G.: Thom polynomials of Morin singularities and the Green-Griffiths-Lang conjecture, arXiv:1011.4710

\bibitem{bk} B\'erczi, G. and Kirwan, F., Grosshans theory for graded unipotent groups, in preparation

\bibitem{bdhk} B\'erczi, G., Doran, B., Hawes, T., Kirwan, F., Geometric invariant theory for graded unipotent groups and applications, in preparation

\bibitem{bsz} B\'erczi, G., Szenes, A.: Thom polynomials of Morin singularities. Annals of Mathematics 175, 567-629 (2012)

\bibitem{bgv} Berline, H., Getzler, E., Vergne, M.: Heat kernels and
    Dirac operators. Springer-Verlag Berlin (2004)

\bibitem{BV} Berline, N., Vergne, M.: Zeros dun champ de vecteurs et classes characteristiques
equivariantes. Duke Math. J. 50(2), 539-549 (1973)

\bibitem{botttu} Bott, R. and Tu, L. W. , Differential forms in
algebraic topology, Graduate Texts in Mathematics, Springer-Verlag,
1982.

\bibitem{dem} Demailly, J.-P.: Algebraic criteria for Kobayashi hyperbolic projective varieties and jet differentials.  Proc. Sympos. Pure Math.
62, 285-360 (1982)

\bibitem{dk} Doran, B., Kirwan, F.:Towards non-reductive geometric invariant theory. Pure and Appl. Math. Q. 3 , 61-105 (2007)

\bibitem{eg97} Edidin, D.  and Graham, W.: Characteristic classes in the Chow ring. J. Algebraic Geom.,
6(3):431-443, 1997.

\bibitem{egl} Ellingsrud, G., G\"ottsche, L. and Lehn, M.,  On the cobordism class of the Hilbert scheme of a surface. J. Algebraic Geom., 10(1):81Ð100, 2001.

\bibitem{eisenbud}  Eisenbud, D.: Commutative algebra, with a view toward algebraic geometry. Graduate Texts
in Mathematics 150, Springer-Verlag  (1995)

\bibitem{FR1} Feh\'er, L. M., Rim\'anyi, R.: Thom polynomial computing strategies. A survey. Adv. Studies in Pure Math. 43, Singularity Theory and Its Applications, Math. Soc. Japan, 45-53 (2006)

\bibitem{rf} Feh\'er, L. M., Rim\'anyi, R.: Thom series of contact singularities. Annals of Mathematics 176, 1381-1426 (2012)

\bibitem{fulton} Fulton, W.: Intersection Theory. Springer-Verlag, New York (1984)

\bibitem{fultonnotes} Fulton, W.: Equivariant cohomology in algebraic geometry. http://www.math.lsa.umich.edu/ dandersn/eilenberg. Eilenberg lectures, Columbia University, Spring 2007.

\bibitem{gaffney} Gaffney, T.: The Thom polynomial of
$P^{1111}$. Proc. Symp. Pure Math.
40, 399-408 (1983)

\bibitem{getzlervergne} 

\bibitem{gg} Green, M., Griffiths, P.: Two applications of algebraic
geometry to entire holomorphic mappings. The Chern Symposium
1979. Proc. Intern. Symp. 41-74, Springer, New York (1980)

\bibitem{HK} Haefliger, A., Kosinski, A.: Un th\`eor\`eme de Thom sur les singularit\`es des applications
diff\`erentiables. S\`eminaire Henri Cartan 9 Expos\`e 8 (1956-57)

\bibitem{joseph} Joseph, A.: On the variety of a highest weight
    module. J. of Algebra 88, 238-278 (1984)

\bibitem{kaz97} Kazarian, M.: Characteristic classes of singularity theory. In The Arnold-Gelfand mathematical
seminars, pages 325Ð340. Birkh\"auser Boston, 1997.

\bibitem{kazarian2} Kazarian, M.:  Thom polynomials for Lagrange, Legendre, and critical point function singularities. Proc. LMS 86 707-734 (2003)

\bibitem{lehn} Lehn, M., Chern classes of tautological sheaves on Hilbert schemes of points on surfaces, Invent.
Math. 136 (1999), 157-207.
\bibitem{mop} Marian, A., Oprea, D., Pandharipande, R., Segre classes and Hilbert schemes of points, arXiv:1507.00688.

\bibitem{milsturm} Miller, E., Sturmfels, B.: Combinatorial Commutative Algebra. Springer Verlag, Berlin (2004)

\bibitem{git} Mumford, D., Fogarty, J., Kirwan, F.: Geometric Invariant Theory. Springer Verlag, Berlin (1994)

\bibitem{nakajima} Nakajima, H., Heisenberg algebra and Hilbert schemes of points on projective surfaces, Ann. Math. 145 (1997), 379-388.

\bibitem{rennemo} Rennemo, J.: Universal Polynomials for Tautological Integrals on Hilbert Schemes, arXiv:1205.1851 (2012).

\bibitem{rimanyi} Rim\'anyi, R.: Thom polynomials, symmetries and incidences of singularities. Invent. Math.
143(3), 499-521 (2001) 

\bibitem{rossmann} Rossmann, W.: Equivariant multiplicities on complex varieties. Orbites unipotentes et representations, III. Asterisque No. 173-174,  313-330 (1989)

\bibitem{szenes} Szenes, A.: Iterated residues and multiple Bernoulli polynomials. Int. Math. Res. Not. 18, 937-956 (1998)

\bibitem{thom} Thom, R.: Les singularit\'es des applications diff\'erentiables. Ann. Inst. Fourier 6, 43-87 (1955-56)

\bibitem{voj} Vergne, M.: Polynomes de Joseph et repr\'esentation de Springer. Annales scientifiques de l'\'Ecole Normale Sup\'erieure 23.4, 543-562 (1990)
\end{thebibliography}
\end{document}